%% 
%% Copyright 2007, 2008, 2009 Elsevier Ltd
%% f
%% This file is part of the 'Elsarticle Bundle'.
%% ---------------------------------------------
%% 
%% It may be distributed under the conditions of the LaTeX Project Public
%% License, either version 1.2 of this license or (at your option) any
%% later version.  The latest version of this license is in
%%    http://www.latex-project.org/lppl.txt
%% and version 1.2 or later is part of all distributions of LaTeX
%% version 1999/12/01 or later.
%% 
%% The list of all files belonging to the 'Elsarticle Bundle' is
%% given in the file `manifest.txt'.
%% 
%% Template article for Elsevier's document class `elsarticle'
%% with harvard style bibliographic references
%% SP 2008/03/01

\documentclass{amsart}

%% Use the option review to obtain double line spacing
%% \documentclass[authoryear,preprint,review,12pt]{elsarticle}

%% Use the options 1p,twocolumn; 3p; 3p,twocolumn; 5p; or 5p,twocolumn
%% for a journal layout:
%% \documentclass[final,1p,times,authoryear]{elsarticle}
%% \documentclass[final,1p,times,twocolumn,authoryear]{elsarticle}
%% \documentclass[final,3p,times,authoryear]{elsarticle}
%% \documentclass[final,3p,times,twocolumn,authoryear]{elsarticle}
%% \documentclass[final,5p,times,authoryear]{elsarticle}
%% \documentclass[final,5p,times,twocolumn,authoryear]{elsarticle}

%% For including figures, graphicx.sty has been loaded in
%% elsarticle.cls. If you prefer to use the old commands
%% please give \usepackage{epsfig}

%% The amssymb package provides various useful mathematical symbols
\usepackage{amssymb}
%% The amsthm package provides extended theorem environments
 \usepackage{amsthm}

%% The lineno packages adds line numbers. Start line numbering with
%% \begin{linenumbers}, end it with \end{linenumbers}. Or switch it on
%% for the whole article with \linenumbers.
 \usepackage{lineno}

\usepackage{verbatim}
\usepackage{amsmath}
\usepackage{graphicx}
\usepackage{amssymb}
\usepackage{epstopdf}
\usepackage{color}
\usepackage{enumitem}
\DeclareGraphicsRule{.tif}{png}{.png}{`convert #1 `dirname #1`/`basename #1 .tif`.png}

\usepackage{mathptmx}

\usepackage[T1]{fontenc}

\usepackage{enumitem}
\makeatletter
\newcommand{\mylabel}[2]{#2\def\@currentlabel{#2}\label{#1}}
\makeatother

%Packages for making comments

\usepackage{xargs}
\usepackage[pdftex,dvipsnames]{xcolor}
\usepackage[colorinlistoftodos,prependcaption,textsize=tiny]{todonotes}
\newcommandx{\unsure}[2][1=]{\todo[linecolor=red,backgroundcolor=red!25,bordercolor=red,#1]{#2}}
\newcommandx{\change}[2][1=]{\todo[linecolor=blue,backgroundcolor=blue!25,bordercolor=blue,#1]{#2}}
\newcommandx{\info}[2][1=]{\todo[linecolor=OliveGreen,backgroundcolor=OliveGreen!25,bordercolor=OliveGreen,#1]{#2}}
\newcommandx{\improvement}[2][1=]{\todo[linecolor=Plum,backgroundcolor=Plum!25,bordercolor=Plum,#1]{#2}}
\newcommandx{\thiswillnotshow}[2][1=]{\todo[disable,#1]{#2}}

\newtheorem{theorem}{Theorem}[section]
\newtheorem{corollary}[theorem]{Corollary}
\newtheorem{lemma}[theorem]{Lemma}

\newtheorem{proposition}[theorem]{Proposition}
\newtheorem{defi}[theorem]{Definition}

\newtheorem{question}[theorem]{Question}
\newtheorem{remark}[theorem]{Remark}
\newtheorem{example}[theorem]{Example}

\newcommand{\BH}{\Bbb H}
\newcommand{\BG}{\Bbb G}
\newcommand{\BK}{\Bbb K}
\newcommand{\BL}{\Bbb L}
\newcommand{\BI}{\Bbb I}
\newcommand{\BJ}{\Bbb J}
\newcommand{\BT}{\Bbb T}

\newcommand{\bx}{\mathbf{x}}
\newcommand{\ba}{\mathbf{a}}
\newcommand{\bb}{\mathbf{b}}
\newcommand{\by}{\mathbf{y}}

\newcommand{\bw}{\mathbf{w}}

\newcommand{\arity}{k}
\newcommand{\diff}{n}
\newcommand{\seq}{t}

\newcommand{\gap}{g}

\newcommand{\Hom}{\mathrm{Hom}}
\newcommand{\diagonal}{\Delta}
\newcommand{\dist}{\mathrm{dist}}

\title[{Dismantlability, connectedness, and mixing in relational structures}]{Dismantlability, connectedness, and mixing\\in relational structures}

%% use optional labels to link authors explicitly to addresses:
%% \author[label1,label2]{}
%% \address[label1]{}
%% \address[label2]{}

%\author{}
%
%\address{}

\author{Raimundo Brice\~no}
%\address{School of Mathematical Sciences, Tel Aviv University, Tel Aviv 69978, Israel}
\address{Facultad de Matem\'aticas, Pontificia Universidad Cat\'olica de Chile, Santiago, Chile}
%\ead{raimundo@alumni.ubc.ca}
\email{raimundo.briceno@mat.uc.cl}
\author{Andrei Bulatov}
\address{School of Computing Science, Simon Fraser University, Burnaby BC, Canada}
\email{abulatov@sfu.ca}
\author{V\'ictor Dalmau}
\address{Department of Information and Communication Technologies, Universitat Pompeu Fabra, Barcelona, Spain}
\email{victor.dalmau@upf.edu}
\author{Beno\^it Larose}
\address{LACIM, Universit\'e du Qu\'ebec a Montr\'eal, Montr\'eal, Canada}
\email{blarose@lacim.ca}

\subjclass[2010]{08A70, 68Q87, 68R01 (Primary); 82B20, 05B45, 05C15 (Secondary)}
\keywords{Relational structure; constraint satisfaction problem; homomorphism; mixing properties; Gibbs measure}

\begin{document}

\maketitle

\begin{abstract}
The Constraint Satisfaction Problem (CSP) and its counting counterpart appears under different guises in many areas of mathematics, computer science, and elsewhere. Its structural and algorithmic properties have demonstrated to play a crucial role in many of those applications. For instance, in the decision CSPs, structural properties of the relational structures involved---like, for example, dismantlability---and their logical characterizations have been instrumental for determining the complexity and other properties of the problem. Topological properties of the solution set such as connectedness are related to the hardness of CSPs over random structures. Additionally, in approximate counting and statistical physics, where CSPs emerge in the form of spin systems, mixing properties and the uniqueness of Gibbs measures have been heavily exploited for approximating partition functions and free energy.

In spite of the great diversity of those features, there are some eerie similarities between them. These were observed and made more precise in the case of graph homomorphisms by Brightwell and Winkler, who showed that dismantlability of the target graph, connectedness of the set of homomorphisms, and good mixing properties of the corresponding spin system are all equivalent. In this paper we go a step further and demonstrate similar connections for arbitrary CSPs. This requires much deeper understanding of dismantling and the structure of the solution space in the case of relational structures, and new refined concepts of mixing introduced by Brice\~no. In addition, we develop properties related to the study of valid extensions of a given partially defined homomorphism, an approach that turns out to be novel even in the graph case. We also add  to the mix the combinatorial property of finite duality and its logic counterpart, FO-definability, studied by Larose, Loten, and Tardif.
\end{abstract}

\section{Introduction}
\label{section1}

The Constraint Satisfaction Problem (CSP) provides a powerful framework in a wide range of areas of mathematics, computer science, statistical physics, and elsewhere. The goal in a CSP is to find an assignment to variables from a given set that satisfies a number of given constraints. The counting version of the problem asks about the number of such assignments. The CSP however appears in different forms: as the standard one outlined above in AI and computer science \cite{dechter2003}, as the homomorphism problem in graph and model theory \cite{1-feder,MR2089014}, as conjunctive query evaluation in logic and database theory \cite{KolaitisVardi00-containment}, as computing the partition function of a spin system in statistical physics \cite{Mezzard09:information} and related areas, like symbolic dynamics and coding \cite{lind1995,marcus2012}.

The CSP allows for many approaches of diverse nature, and every application field exploits some of its many facets: structural properties of constraints for complexity and algorithms, probabilistic properties and the topology of the solution space in Random CSP and random structures, mixing properties in statistical physics and dynamical systems, decay of correlations and the uniqueness of probabilistic measures in approximate counting, and homomorphic duality and logical characterizations in model theory. In \cite{1-brightwell}, it was observed that some of these properties are actually closely related, at least in the simple case of graph homomorphisms. In this paper we take this research direction a step further by extending Brightwell and Winkler's results to the general CSP, and by refining and widening the range of the properties involved.

We start off with a brief introduction of the features of the CSP considered in this paper. Afterwards, we provide a detailed account of the necessary background and a description of our results. Every CSP involves a set of variables and a domain, a set of possible values for the variables. Assumptions about these two sets differ in different areas. The most studied case in combinatorics and complexity theory is when both sets are finite. However, many interesting problems such as scheduling and temporal and spatial reasoning involve infinite domains; see also the extensive literature on infinite CSPs (for example, \cite{Bodirsky15} and the references therein). In other cases such as in statistical physics, it is natural to choose the set of variables to be infinite (a lattice, for example). Then, it is also natural to study probability distributions over such assignments---where \emph{Gibbs measures} and the problem of their \emph{(non-)uniqueness} appear naturally \cite{1-georgii}---and also study quantities such as \emph{entropy} and \emph{free energy} \cite{bandyopadhyay2008,1-briceno}.

Following \cite{1-feder}, CSPs can be formulated as the problem of deciding the existence of a homomorphism from a finite relational structure $\BG$ to a target relational structure $\BH$, where $\BG$ and $\BH$ encode the variables and the values of the CSP. The complexity of this problem, especially the case when $\BH$ is a fixed finite relational structure, has received a lot of attention, culminating with the proof of the Feder-Vardi conjecture \cite{Bulatov17,Zhuk17}, which asserts that every CSP is either in $\mathrm{PTIME}$ or $\mathrm{NP}$-complete. In the present paper we focus as well on the case when $\BH$ is finite, although our main focus is not algorithmic but rather structural. In particular, we are interested in studying the space $\Hom(\BG,\BH)$ of homomorphisms from $\BG$ to $\BH$. Furthermore, following \cite{1-brightwell}, we consider homomorphisms from both finite and infinite relational structures $\BG$ (although \cite{1-brightwell} only considers graphs), a flexibility that turns out to be useful to see different aspects of homomorphism spaces $\Hom(\BG,\BH)$ that otherwise would be meaningless. We note here that the case where $\BG$ is restricted has also been studied. In particular, there is an important line of research that studies the homomorphism existence problem when $\BG$ belongs to a prescribed set of relational structures and $\BH$ is an arbitrary relational structure (e.g., see \cite{Grohe07-otherside}). The case where both $\BG$ and $\BH$ are restricted has also been investigated (e.g., see \cite{CohenCJZ19}).

There is a vast quantity of literature concerning graph homomorphisms and their properties through the lens of statistical physics \cite{borgs2006,2-dyer,2-brightwell}. In this context, it is very common to encode a {\em spin system} as a pair of relational structures $\BG$ and $\BH$, where $\BG$ contains a set of variables/particles and $\BH$ contains the set of values/spins that each particle could take, imposing \emph{hard constraints} on them, i.e., disregarding configurations of values that do not satisfy all the given constraints. In practical terms, all this reduces to the study of the maps from $\BG$ to $\BH$ that are homomorphisms, individually and as a set. In particular, many important parameters of a spin system such as free energy and entropy can be learned from studying such a set of homomorphisms.

In \cite{1-brightwell}, it was proved that many of the properties of graph homorphisms used in the above areas are equivalent to a single structural property of graphs, namely, \emph{dismantlability}. In this paper we follow a similar approach and study properties of CSPs over general relational structures that we put into basically three categories: (1) dismantlability, (2) connectedness, and (3) mixing. Furthermore, as a consequence of our results, we established a connection with a fourth notion not initially contemplated in \cite{1-brightwell}: (4) \emph{finite duality}.

\subsubsection*{Dismantlability}

A graph is said to be dismantlable if it can be reduced to a single vertex by removing vertices whose neighborhood is contained in the neighborhood of some other vertex. Such transformations are called \emph{folds}, and they can be viewed as \emph{retractions} of a very particular kind. Ultimately, this kind of action allow us to reduce $\BH$ and $\Hom(\BG,\BH)$ by replacing the appearance of certain spins with others that have equal or more freedom of allocation. Dismantlable graphs were introduced in \cite{1-nowakowski}, based on ideas already present in \cite{1-kelly} in the context of lattices, and have been intensively studied in combinatorics. Distamantlability can be generalized in a natural way to relational structures. Indeed, some variants of this notion have been used in the study of CSPs.  In particular, dismantlability has been applied in \cite{Bulatov12:enumerating} to the problem of enumerating all solutions of $\Hom(\BG,\BH)$ with polynomial delay. Also, it has played a major role in the study of CSPs definable in first-order logic \cite{DalmauKL04,MR2357493}.

\subsubsection*{Connectedness}

When $\BG$ is finite, it is often useful to convert $\Hom(\BG,\BH)$ into an auxiliary graph and explore the connectivity properties of it. The set of edges of $\Hom(\BG,\BH)$ can be defined in a variety of ways, usually the most suitable to the problem at hand. For example, it is common to say that two elements from $\Hom(\BG,\BH)$ are close (and therefore adjacent in the graph) if the Hamming distance between them is smaller than a certain threshold. The particular case when this threshold is $1$ has been intensively studied, motivated initially by the fact that the connectedness of the solution space for SAT problems over random instances is linked to the performance of standard satisfiability algorithms, such as WalkSAT or DPLL \cite{Achlioptas04:exponential,Krzakala07:Gibbs}. This has given rise to a general framework called {\em reconfiguration} \cite{Ito11} that goes way beyond homomorphisms (see also \cite{Nishimura18} for a recent survey). Work in this area encompasses both structural questions (under which conditions is $\Hom(\BG,\BH)$ connected?) and algorithmic ones (what is the complexity of, deciding, given $\BG$ and $\BH$ as input, whether $\Hom(\BG,\BH)$ is connected? Its diameter? The shortest path between two given members of $\Hom(\BG,\BH)$? Etc.). In the context of spin systems, the connectedness of $\Hom(\BG,\BH)$ is related to processes that consists on periodically updating the spin of a single or a small set of particles (e.g., irreducibility of \emph{Glauber dynamics}). We also consider an alternative way to define adjacency in $\Hom(\BG,\BH)$ via \emph{links} as in \cite{MR2357493}. This notion of adjacency is linked to the so-called \emph{finite duality property}, which is another of the main themes of our work.

\subsubsection*{Mixing}

Mixing properties have been intensively studied in statistical physics and related areas (see \cite{1-alon,ban2017tree,boyle2010,2-briceno,briceno2018factoring,ceccherini2012,pavlov2015}), and are usually applied when the set of particles in $\BG$ is very large or infinite. In this case, it can be very useful to be able to ``glue'' together partial homomorphisms, provided their domains are far from each other. There are several properties that formalize this phenomenon and it is common to establish hierarchies among them. More concretely, given a metric in $\BG$, it is natural to ask whether there exists some uniform gap such that for any two subsets $V$ and $W$ of particles sufficiently far apart (in terms of the gap), and for any pair of homomorphisms $\phi,\psi\in\Hom(\BG,\BH)$, we can find a third one, $\gamma$, such that restriction of $\gamma$ to $V$ and $W$ coincides with the restrictions of $\phi$ and $\psi$ on $V$ and $W$, respectively. On the contrary, whenever the information content of a given set (at least partially) determines the information content of another set (i.e., the possible values that the variables on it can take), no matter how far it is, such a phenomenon has been called \emph{long range action} in previous work \cite{4-brightwell}.

Similar phenomena are used in the related area of approximate computing of partition functions, where many algorithms are based on decay of correlations between values of remote elements of $\BG$, which allows for approximation of partition functions based only on local neighborhoods of variables \cite{1-dyer,1-weitz}.

\subsubsection*{Finite duality and logic characterizations}

Homomorphism duality often helps to design a solution algorithm for a CSP or establish useful properties. A relational structure $\BH$ is said to have \emph{homomorphism duality} if there is a set $\mathcal{O}$ of relational structures---called \emph{obstructions}---such that a relational structure $\BG$ has a homomorphism to $\BH$ if and only if no relational structure from $\mathcal{O}$ is homomorphic to $\BG$. Sometimes the set of obstructions is very simple, say, any bipartite graph has homomorphic duality, where $\mathcal{O}$ is the set of all odd cycles. If $\mathcal{O}$ can be chosen finite, we say that $\BH$ has \emph{finite duality}. 

Homomorphism duality is closely related to another property of CSPs. Let $\mathcal L$ be a logic language such as first order, second order, etc. The problem of deciding homomorphisms to a relational structure $\BH$ is said to be {\em expressible in \emph{$\mathcal L$}} if there is a formula $\Phi$ in the language $\mathcal L$ such that $\BG$ has a homomorphism to $\BH$ if and only if $\Phi$ is true on $\BG$. It is known, for instance, that $\BH$ has a set of obstructions consisting of relational structures of bounded treewidth if and only if the corresponding homomorphism problem is expressible in Datalog \cite{1-feder}, or that $\BH$ has finite duality if and only if the corresponding problem is expressible in first order logic \cite{Atserias08}. For a survey on dualities for CSP, see \cite{BulatovKL08}.

\subsection*{Our results}

In this paper we extend the approach taken in \cite{1-brightwell} to the case of homomorphisms between relational structures, that includes, besides graphs, very natural special cases, like \emph{digraphs}, \emph{hypergraphs}, and \emph{shifts of finite type} (see Example \ref{exmp:sft}). In particular, we show (Theorem \ref{the:mainsimple}) that, for a relational structure $\BH$, the three following conditions are equivalent: (A) $\BH^2$ dismantles to a substructure of its diagonal, that is, the substructure of $\BH^2$ induced by the set $\{(a,a)\mid a\in\BH\}$; (B) for any $\BG$, the homomorphism graph $\Hom(\BG,\BH)$ is connected; and (C) for any $\BG$, the space $\Hom(\BG,\BH)$ satisfies certain mixing properties.
Furthermore, there are several contexts where it is natural to work by forcing a certain subset of variables to take each of them a particular value and work with the remaining ones. This idea inspired a refined version of Theorem \ref{the:mainsimple}, namely Theorem \ref{the:main}, which can be regarded as the study of \emph{boundary long range actions}, i.e., long range action phenomena where some boundary configuration is fixed. In particular, even in the case of graphs, this leads to new results that characterizes conditions on $\BH$ that guarantee connectedness and mixing properties when some variables are fixed to take particular values.

As a byproduct of our results, we obtain two applications. On the one hand, we establish a link with \emph{strong spatial mixing} (e.g., see \cite{1-dyer}) and \emph{topological strong spatial mixing} (introduced in \cite{1-briceno}). These two last properties have played an important role in the development of deterministic approximate counting algorithms. In this paper we address the following question: What fixed targets $\BH$ are suitable for both of these properties to hold for any $\BG$? On the other hand, we establish a connection with finite duality, which allows us to reprove the main theorem in \cite{MR2357493}. We hope that our work opens the possibility of developing new counting techniques based on this approach in a very general setting.

\subsection*{Organization} The paper is organized as follows: In Section \ref{section2}, we introduce most of the objects and terminology relevant to this work. In particular, we define relational structures, the natural maps and operations on them, and some useful constructions. In addition, we introduce the three relevant properties around homomorphisms: dismantlability, connectedness, and mixing. Next, in Section \ref{section3}, we state the two main theorems of this paper, Theorem \ref{the:mainsimple} and its refinement, Theorem \ref{the:main}, and we illustrate how these two theorems relate to the work of Brightwell and Winkler and generalizes it. Next, in Section \ref{section4}, we prove all the equivalences which constitute Theorem \ref{the:main}. In Section \ref{section5}, we define Gibbs measures on sets of homomorphisms and explore the consequences of our results in relation to spatial mixing properties of such measures. Finally, in Section \ref{section6}, we establish a meaningful connection between our results, topological strong spatial mixing, and finite duality of relational structures.

\section{Preliminaries}
\label{section2}

Let $H$ be a countable (finite or denumerable) set and $\arity$ a positive integer. The set of $\arity$-tuples over $H$ is denoted by $H^k$. A ($k$-ary) {\bf relation} $R$ over $H$ is a subset $R \subseteq H^k$. The elements of a relation $R$ will be denoted in boldface, e.g., $\ba$, $\bb$, etc., and $a_i$ will denote the $i$th entry of $\ba$ for $1 \leq i \leq \arity$.

Given another countable set $G$ and a map $\phi:G\rightarrow H$, for a $\arity$-tuple $\ba$ over $G$ we shall use $\phi(\ba)$ to denote the $\arity$-tuple over $H$ obtained after applying $\phi$ to $\ba$ componentwise. If $V \subseteq G$, we will denote by $\left.\phi\right|_V$ the restriction of $\phi$ to $V$. Furthermore, if $\psi$ is another map with domain $H$, we shall use $\psi \circ \phi$ to denote the composition of $\psi$ with $\phi$, i.e., the map $x\mapsto \psi(\phi(x))$.

A {\bf signature} $\tau$ is a collection of relation symbols $R$, each of them with an associated arity. For a given signature $\tau$, a {\bf relational structure} (with signature $\tau$)---or simply, a {\bf $\tau$-structure}---$\BH$ consists of a countable set $H$ called the {\bf universe} of $\BH$ and a relation $R(\BH) \subseteq H^{k}$ for each $R \in \tau$. Thus relation symbols serve as markers that allow one to associate relations in different structures with the same signature (e.g., see \cite{Hodges97}).

We shall use the same capital letter to denote the universe of a $\tau$-structure, e.g., $H$ is the universe of $\BH$. We will usually consider $\tau$ to be a fixed signature, and $\BG$ and $\BH$ to be $\tau$-structures with universes $G$ and $H$, respectively. 

The {\bf degree} of an element in a relational structure is defined as the number of tuples in which it occurs. A relational structure is {\bf locally finite} if every element in its universe has finite degree and is said to be {\bf finite} if its universe is finite.

\begin{remark}
{\normalfont
A digraph $\BG$ (with self-loops allowed) is a very particular case of a relational structure, where the signature $\tau$ consists of a unique relation symbol $E$ of arity $2$. Moreover, graphs correspond to the digraph case where the edge-set $E(\BG)$ is a symmetric relation.}
\end{remark}

A map $\phi:G\rightarrow H$ is said to be a {\bf homomorphism} from $\BG$ to $\BH$ if, for every relation symbol $R\in\tau$,
$$
\ba \in R(\BG) \implies \phi(\ba)\in R(\BH).
$$

We will denote by $\Hom(\BG,\BH)$ the set of all homomorphisms from $\BG$ to $\BH$.

\begin{example}  
\label{exmp:sft}
{\normalfont 
A particular example of CSPs that cannot be represented in the setting of Brightwell and Winkler (that is, as homomorphisms of graphs) is the case of \emph{shifts of finite type}, a fundamental object in dynamical systems and probability \cite{lind1995,lind2002,marcus2012}.

Given a positive integer $d$, consider the signature $\tau = \{R_1,\dots,R_d\}$, where $R_i$ is a $2$-ary relation for all $1 \leq i \leq d$. We consider two $\tau$-structures $\BG$ and $\BH$. Here, $\BG$ will be an infinite relational structure with universe $G = \mathbb{Z}^d$ and relations $R_i(\BG)$, $1 \leq i \leq d$, representing the usual $d$-dimensional hypercubic lattice and the adjacency of pairs of elements in it. On the other hand, $\BH$ will be a finite relational structure with universe $H$ and $R_i(\BH)$, $1 \leq i \leq d$, will represent pairs of ``colors'' from $H$ that are allowed to be adjacent in the canonical $i$th direction of the lattice. Then, $X = \Hom(\BG,\BH)$ is known as a {\bf $d$-dimensional nearest-neighbor shift of finite type}, a set of colorings of $\mathbb{Z}^d$ with not necessarily \emph{isotropic} adjacency rules (i.e., we do not need to have the same restrictions in every direction), and any such object can be represented in this way (see also \cite{1-schraudner} for a similar approach). It is well-known that every tiling of $\mathbb{Z}^d$ can be represented as a nearest-neighbor shift of finite type and vice versa \cite{schmidt2001multi}.

More generally, it can be checked that for any shift of finite type $X$ defined on a countable group $\Gamma$ (see \cite{ceccherini2010cellular} for an introduction to the subject), i.e., not necessarily nearest-neighbor nor restricted to $\Gamma = \mathbb{Z}^d$, there exists a signature $\tau$ and $\tau$-structures $\BG$ and $\BH$ such that $X = \Hom(\BG,\BH)$.
}
\end{example}

\begin{example}
{\normalfont
Hypergraph homomorphisms can be also naturally encoded using relational structures. A  ${\bf hypergraph}$ is a pair $(V,E)$ where $V$ is a set of elements called \emph{nodes} and $E$ is a collections of subsets of $V$ called \emph{hyperedges}. If $H=(V,E)$, $H'=(V',E')$ are hypergraphs, then a mapping $f:V\rightarrow V'$ is a homomorphism from $H$ to $H'$ if for every hyperedge $e\in E$, $\bigcup_{v\in e} \{f(v)\}$ is a hyperedge of $E'$. Now, we can encode $H$ and $H'$ as relational structures over a signature $\tau$ containing a unique relation symbol $R$ of arity $m$, where $m$ is the maximum size of any hyperedge in $E\cup E'$. In particular, $H$ is encoded by the relational structure $\BH$ with universe $V$ and
$$R(\BH)=\{(v_1,\dots,v_m) \mid \{v_1,\dots,v_m\}\in E\}.$$

Analogously, $H'$ can be encoded in a similar way. It is not difficult to see that every mapping $f:V\rightarrow V'$  is a homomorphism from $H$ to $H'$ if and only if it is a homomorphism of their associated relational structures.
}
\end{example}

A relational structure $\BJ$ is a {\bf substructure} of $\BH$ if $J \subseteq H$ and, for every relation symbol $R \in \tau$, we have that $R(\BJ)\subseteq R(\BH)$. Furthermore, if for every $\arity$-ary $R\in\tau$, we have that $R(\BJ)= R(\BH)\cap J^k$, then we say that $\BJ$ is the substructure of $\BH$ {\bf induced by} $J$. If $J \subseteq H$ and $\phi:H \rightarrow J$ is a homomorphism acting as the identity on $J$, then $\phi$ is said to be a {\bf retraction}.

The {\bf product} of $\BH_1$ and $\BH_2$, denoted $\BH_1\times\BH_2$, is the $\tau$-structure with universe $H_1 \times H_2$ where, for every $\arity$-ary relation symbol $R \in \tau$, we have that $R(\BH_1 \times \BH_2)$ consists of all tuples $((a_1,b_1),\dots,(a_k,b_k))$ with $(a_1,\dots,a_k)\in R(\BH_1)$ and $(b_1,\dots,b_k)\in R(\BH_2)$. We shall denote by $\BH^2$ the product $\BH\times \BH$. The {\bf projections} $\pi_1,\pi_2:H^2\rightarrow H$ are the maps $(a,b) \mapsto a$ and $(a,b) \mapsto b$, respectively, for $(a,b)\in H^2$. An element $(a,b)$ of $H^2$ is \emph{diagonal} if $a=b$. The {\bf diagonal set} of $H^2$, denoted $\diagonal(H^2)$, is the set of its diagonal elements. Similarly, the {\bf diagonal structure} of $\BH^2$, denoted $\diagonal(\BH^2)$, is the substructure of $\BH^2$ induced by $\diagonal(H^2)$. A substructure $\BK$ of $\BH^2$ is {\bf symmetric} whenever $(a,b) \in K$ if and only if $(b,a) \in K$. Notice that $\BH^2$ is always symmetric.

In this paper, we will study properties of $\BH$ and how they relate to other properties of $\Hom(\BG,\BH)$ for arbitrary $\BG$. We mainly consider three families of properties, namely, \emph{dismantling} of $\BH$, \emph{connectedness} of some particular graphs with vertex set $\Hom(\BG,\BH)$, and \emph{mixing} properties of $\Hom(\BG,\BH)$. By the end of the article, we also show how all these properties are related to a fourth one, namely, \emph{finite duality}.

\subsection{Dismantling}

Let $\BH$ be a $\tau$-structure and let $a,b$ be elements in its universe $H$. We say that $b$ {\bf dominates} $a$ (in $\BH$) if for every $\arity$-ary $ R\in\tau$, any $i \in \{1,\dots,k\}$, and any $(a_1,\dots,a_k) \in  R(\BH)$ with $a_i = a$, we also have that
$$
(a_1,\dots,a_{i-1},b,a_{i+1},\dots,a_k)\in  R(\BH).
$$

Additionally, if $a\neq b$, then we say that $a$ is {\bf dominated} (in $\BH$).

A sequence of $\tau$-structures $\BJ_0,\dots,\BJ_{\ell}$ is a {\bf dismantling sequence} if for every $0\leq j<\ell$ there exist $a_j,b_j \in J_j$ such that $b_j$ dominates $a_j$ in $\BJ_j$, and $\BJ_{j+1}$ is the substructure of $\BJ_j$ induced by $J_j\setminus\{a_j\}$. In this case, we say that $\BJ_0$ {\bf dismantles to} $\BJ_{\ell}$. We can alternatively denote a dismantling sequence by giving the initial $\tau$-structure $\BJ_0$ and the sequence of elements $a_0,\dots,a_{\ell-1}$. We say that $\BH$ is {\bf dismantlable} if it dismantles to a $\tau$-structure such that its universe is a singleton.

Note that for every $0\leq j <\ell$ there is a natural retraction $r_j$ from $\BJ_j$ to $\BJ_{j+1}$, where $r_j$ maps $a_j$ to $b_j$ and acts as the identity elsewhere. We call such retractions a {\bf fold}. By successive composition, one can define a retraction (namely, $r_{j'-1} \circ \cdots \circ r_j$) from $\BJ_j$ to $\BJ_{j'}$ for every $j \leq j'$.

\begin{example}
{\normalfont 
In the context of $d$-dimensional nearest-neighbor SFTs from Example \ref{exmp:sft}, suppose that $d=2$ and consider $\tau = \{R_1,R_2\}$ with $R_i$ a $2$-ary relation symbol for $i=1,2$. Then, we can consider $\tau$-structure $\BG$ and $\BH$ such that
\begin{itemize}
\item the universe of $\BG$ is $\mathbb{Z}^2$,
\begin{align*}
R_1(\mathbb{G})	&	= \{(g, g + (1,0)): g \in \mathbb{Z}^2\}, \text{ and}\\
R_2(\mathbb{G})	&	= \{(g, g + (0,1)): g \in \mathbb{Z}^2\};
\end{align*}
\item the universe of $\BH$ is $H = \{a,b,c\}$,
\begin{align*}
R_1(\BH) &= \{(a,a),(a,b),(b,a),(b,b), (b,c),(c,b)\}, \text{ and}\\
R_2(\BH) &=  \{(a,a),(a,b),(b,a),(b,b),(b,c),(c,a)\}.
\end{align*}
\end{itemize}

We can notice that $c$ folds to $b$, and $b$ folds to $a$. We will see that the fact that $\BH$ can be dismantled to a single element will imply that $\Hom(\BG,\BH)$ has special properties. We also notice that any $\Hom(\BG,\BH)$ can be seen as a \emph{Wang tiling} of $\mathbb{Z}^2$ and vice versa \cite{schmidt2001multi}.
} 
\end{example}

It is well known that if $\BH$ dismantles to some substructure $\BK$, then this dismantling can be found in a greedy manner. Formally, we have the following lemma.

\begin{lemma}[{\cite[Lemma 5.1]{MR2357493}}]
\label{le:greedy}
If $\BH$ dismantles to $\BK$ and $a \in H \setminus K$ is dominated in $\BH$, then the substructure of $\BH$ induced by $H \setminus \{a\}$ dismantles to $\BK$.
\end{lemma}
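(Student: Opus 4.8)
The plan is to argue by induction on the length $\ell$ of a dismantling sequence $\BH = \BJ_0, \dots, \BJ_\ell = \BK$ (equivalently, on $|H \setminus K|$; the case $\ell = 0$ is vacuous), after recording three elementary facts about the domination relation. (i) \emph{Transitivity}: if $c$ dominates $b$ and $b$ dominates $a$ in a structure $\BJ$, then $c$ dominates $a$ in $\BJ$ --- substitute $b$ and then $c$ into the relevant position of any tuple of $R(\BJ)$ containing $a$. (ii) \emph{Stability under deleting a third element}: if $b$ dominates $a$ in $\BJ$ and $v \in J \setminus \{a,b\}$, then $b$ dominates $a$ in the substructure induced by $J \setminus \{v\}$, since replacing an $a$-entry by $b$ in a tuple that avoids $v$ yields a tuple that avoids $v$. (iii) \emph{Twin swap}: if $a$ and $a'$ dominate each other in $\BJ$, then the bijection of $J \setminus \{a\}$ onto $J \setminus \{a'\}$ that sends $a' \mapsto a$ and fixes every other element is an isomorphism of the induced substructures $\BJ \setminus \{a\}$ and $\BJ \setminus \{a'\}$ (homomorphy in each direction uses one direction of the mutual domination, as in the verification of (i)), and it restricts to the identity on any substructure $\BK$ with $a, a' \notin K$; since ``dismantles to'' is transported by isomorphisms, whenever $a, a' \notin K$ and $\BJ \setminus \{a'\}$ dismantles to $\BK$ we also get that $\BJ \setminus \{a\}$ dismantles to $\BK$.

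Now fix a dismantling sequence as above whose $j$-th fold deletes $a_j$ (dominated by $b_j \neq a_j$ in $\BJ_j$). Since $a \in H \setminus K$, we have $a = a_m$ for a unique $m$. If $m = 0$, then $\BH \setminus \{a\} = \BJ_1$ already dismantles to $\BK$. Suppose $m \geq 1$; then $a \in J_1 \setminus K$, and $\BJ_1 = \BH \setminus \{a_0\}$ dismantles to $\BK$ via a sequence of length $\ell - 1$. I would now split on whether $a$ is dominated in $\BJ_1$. If it is, the inductive hypothesis applied to $\BJ_1$ gives that $\BJ_1 \setminus \{a\} = \BH \setminus \{a_0, a\}$ dismantles to $\BK$; it then suffices to fold $\BH \setminus \{a\}$ onto $\BH \setminus \{a_0, a\}$, i.e.\ to show $a_0$ is dominated in $\BH \setminus \{a\}$. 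This is immediate from (ii) when $b_0 \neq a$; when $b_0 = a$, so that $a$ dominates $a_0$ in $\BH$, one picks (by hypothesis) a dominator $c \neq a$ of $a$ in $\BH$, uses (i) to get that $c$ dominates $a_0$ in $\BH$, and concludes via (ii) unless $c = a_0$ --- but then $a$ and $a_0$ dominate each other in $\BH$ and (iii) finishes (using that $\BH \setminus \{a_0\} = \BJ_1$ dismantles to $\BK$). If instead $a$ is \emph{not} dominated in $\BJ_1$, then by (ii) every dominator of $a$ in $\BH$ distinct from $a$ must equal $a_0$; as $a$ is dominated in $\BH$ by hypothesis, $a_0$ dominates $a$ in $\BH$, hence by (i) so does $b_0$, and since $b_0 \neq a_0$ this forces $b_0 = a$, so $a$ dominates $a_0$ in $\BH$ too. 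Again $a$ and $a_0$ dominate each other in $\BH$, and (iii) finishes the proof.

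The routine content is the substitution arguments behind (i) and (ii), together with the generic situations in which some dominator of the vertex being deleted survives the deletion, so that (ii) alone does the job. The main obstacle is the ``twin'' configuration in which $a$ and $a_0$ dominate each other: deleting $a$ then destroys the domination of $a_0$, so (ii) is useless, and one really needs the structural fact (iii) to transport the already-available dismantling of $\BH \setminus \{a_0\}$ across the twin-swap isomorphism onto $\BH \setminus \{a\}$. Recognizing that this is the only way the naive reduction can break down, and checking that the twin swap is genuinely an isomorphism of the induced substructures, is the crux; everything else is the induction and bookkeeping.
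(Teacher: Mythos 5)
Your argument is correct. Note that the paper itself gives no proof of this lemma -- it is quoted from \cite[Lemma 5.1]{MR2357493} -- so there is no in-paper argument to compare against; judged on its own, your induction on the length of the dismantling sequence, with facts (i) transitivity of domination, (ii) persistence of domination after deleting a third element, and (iii) the twin-swap isomorphism, is a complete and correct self-contained proof. You correctly isolate the only genuinely delicate configuration: when $a$ and $a_0$ dominate each other, deleting $a$ can destroy every witness of the domination of $a_0$, and then one must transport the dismantling of $\BH\setminus\{a_0\}$ across the isomorphism $\BH\setminus\{a\}\cong\BH\setminus\{a_0\}$ (which fixes $K$ pointwise, so the target $\BK$ is preserved). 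The small points worth making explicit when writing this up are that the twin-swap map is verified to be a homomorphism by replacing occurrences of $a'$ by $a$ one coordinate at a time (domination only licenses single-position substitutions), and that in case B the chain ``every dominator of $a$ in $\BH$ other than $a$ equals $a_0$, hence $a_0$ dominates $a$, hence by transitivity $b_0$ dominates $a$, hence $b_0=a$'' indeed forces the twin configuration; both are handled correctly in your plan.
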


Let $J \subseteq H$. We say that $\BH$ is {\bf $J$-non-foldable} if every dominated element in $\BH$ belongs to $J$. When $J = \emptyset$, i.e., if no fold is possible at all, $J$-non-foldable relational structures are a generalization from the graph case of \emph{stiff graphs}, used in \cite{1-brightwell}. There, it is proved that if we start from a given graph, one will always reach the same stiff graph up to isomorphism when dismantling greedily \cite[Theorem 4.4]{1-brightwell}. It can be checked that in the context of relational structures the following is true: for every $J \subseteq H$, dismantling greedily elements in $H \setminus J$ will always lead to isomorphic relational structures (moreover, using an isomorphism that preserves $J$).

\subsection{Walks in relational structures}

We define a {\bf walk}\footnote{See for example \cite{Kun12}. Our notion of walk can be alternatively seen as a walk in the incidence multigraph \cite{MR2357493} of $\BH$.} $w$ in a $\tau$-structure $\BH$ to be a sequence
$$a_0,i_1,(R_1,\ba_1),j_1,a_1,\dots,a_{n-	1},i_n,(R_n,\ba_n),j_n,a_n$$
for some $n \geq 0$, such that, for all $1 \leq \ell \leq n$,
\begin{itemize}
\item $R_\ell \in \tau$, $\ba_\ell \in R_\ell(\BH)$, $i_\ell \neq j_\ell$, and 
\item $a_{\ell-1} = \ba_{\ell}[i_{\ell}]$ and $a_{\ell} = \ba_{\ell}[j_\ell]$.
\end{itemize}

In this case, we will say that $w$ {\bf joins} $a_0$ (the {\bf starting point}) and $a_n$ (the {\bf ending point}), and that the {\bf length} of the walk $w$ is $n$. Notice that if a walk $w$ joins $a_0$ and $a_n$, then there is another walk $w'$ that joins $a_n$ and $a_0$ obtained by just reversing the order of indices. The {\bf distance} $\dist(a,b)$ between two elements $a,b\in H$ is defined to be the smallest length among all the walks $w$ that join $a$ and $b$. The distance $\dist(V,W)$ between sets $V,W \subseteq H$ is defined to be the minimum distance between an element from $V$ and an element from $W$. If $V$ is a singleton $\{a\}$, we will just write $\dist(a,W)$ instead of $\dist(\{a\},W)$.

Note that the definition of walk above coincides with the standard definition of walk when $\BH$ is a graph. However, in the case of graphs it will be convenient to describe the walk merely as the list $a_0,\dots,a_n$ of its nodes, as usual.

A $\tau$-structure $\BH$ is {\bf connected} if there is a walk joining any pair of elements of its universe $H$ and a {\bf connected component} is any induced substructure that is connected and maximal in the sense of inclusion. A walk $w$ is a {\bf cycle} if $n>0$, the starting and ending nodes are the only repeated nodes, and for all $1 \leq \ell < \ell' \leq n$, we have that $(R_\ell,\ba_\ell)\neq (R_{\ell'},\ba_{\ell'})$. A $\tau$-structure $\BT$ is a {\bf $\tau$-forest} if it has no cycles. 

If, additionally, it is connected then it is a {\bf $\tau$-tree}. Usually, $\tau$-trees are defined using the notion of incidence multigraph (e.g., see \cite{MR2357493}). It is easy to verify that the definition given here is equivalent.

\begin{remark}
We note here that some subtleties appear when encoding undirected graphs using relational structures (where tuples are ordered). It is usual  (for example as in the proof of Lemma \ref{lem:graphdism}) to encode the edge-set of an undirected graph as a binary symmetric relation since many properties (such as the existence of homomorphisms) are preserved under this encoding. However, some of the connectedness notions (e.g., cycle, tree, etc.) introduced in this section do not correspond to the standard graph-theoretical notions when graphs are encoded as relational structures in this way. Instead, if one encodes the edge set of a graph with a binary relation containing one tuple per edge (ordering the nodes in any arbitrary way) then the concepts introduced in this section correspond, over graphs, to the usual graph-theoretic meaning of the term.
\end{remark}

\subsection{Forest of walks}
\label{forest}
Given a $\tau$-structure $\BH$, we proceed to define a new $\tau$-structure $\BT_\BH$. The universe $T_\BH$ of $\BT_\BH$ consists of all the walks $w$ in $\BH$. For a $\arity$-ary $R \in \tau$, we define $R(\BT_\BH)$ as follows: for all $\ba=(a_1,\dots,a_k) \in R(\BH)$, for all $1 \leq i \leq \arity$, and for all walks $w$ ending in $a_i$, we include in $R(\BT_\BH)$ the tuple $(w_1,\dots,w_{i-1},w,w_{i+1},\dots,w_\arity)$, where $w_j$, $j \neq i$, is the walk obtained from $w$ by extending it with $i,(R,\ba),j,a_j$. 

We note that $\BT_{\BH}$ does not contain cycles and has exactly $|H|$ connected components, i.e., $|H|$ $\tau$-trees in correspondence with the possible starting element in $H$ for a walk in $T_\BH$. It is easy to check that for every substructure $\BI$ of $\BH$, the $\tau$-structure $\BT_\BI$ is a substructure of $\BT_\BH$.

\begin{remark}
{\normalfont
If $\BH$ is connected and we consider a slight modification of this previous definition, where the walks are asked to be non-backtracking \cite{Kun12}
 (i.e., for every $1 \leq \ell < n$,  we have that either $i_\ell \neq j_{\ell+1}$, or $j_\ell \neq i_{\ell+1}$, or $(R_\ell,\ba_\ell) \neq (R_{\ell+1},\ba_{\ell+1})$) then we obtain that each connected component of the resulting $\tau$-structure corresponds to the \emph{universal covering tree} of $\BH$ \cite{Kozik16,Kun12}. In particular, they are all the same up to isomorphism.
}
\end{remark}

Note that, by construction, the map $\rho_{\BH}: T_\BH \to H$ that sends every walk $w$ in $T_{\BH}$ to its ending point, that from now on we refer as the {\bf label map}, defines a homomorphism from $\BT_{\BH}$ to $\BH$. Furthermore,

\begin{lemma}
\label{le:universaltree}
Assume that $\BH$ is $J$-non-foldable for some $J\subseteq H$ and let $U$ be a cofinite subset of $T_{\BH}$ containing $\rho_\BH^{-1}(J)$. Then, every homomorphism in $\Hom(\BT_{\BH},\BH)$ that agrees with $\rho_{\BH}$ in $U$ is identical to $\rho_{\BH}$.
\end{lemma}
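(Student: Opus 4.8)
The plan is to argue by contradiction, exploiting that the set of walks on which a competing homomorphism $\phi$ disagrees with $\rho_{\BH}$ is finite, hence contains a walk of maximal length. Suppose $\phi \in \Hom(\BT_{\BH},\BH)$ agrees with $\rho_{\BH}$ on $U$ but $\phi \neq \rho_{\BH}$, and set $B = \{w \in T_{\BH} : \phi(w) \neq \rho_{\BH}(w)\}$. Since $\phi$ and $\rho_{\BH}$ coincide on $U$, we have $B \subseteq T_{\BH}\setminus U$, which is finite because $U$ is cofinite; and $B$ is nonempty since $\phi\neq\rho_{\BH}$. Thus I may choose $w \in B$ of maximal length. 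Write $a = \rho_{\BH}(w)$ for the ending point of $w$ and $b = \phi(w)$; by the choice of $w$, $a \neq b$.

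The key step is to show that $b$ dominates $a$ in $\BH$. Fix any $\arity$-ary $R \in \tau$, any index $i \in \{1,\dots,\arity\}$, and any $\ba = (a_1,\dots,a_{\arity}) \in R(\BH)$ with $a_i = a$; I must show that replacing the $i$th coordinate by $b$ still yields a tuple of $R(\BH)$. Since $w$ is a walk ending in $a = a_i$, the construction of $\BT_{\BH}$ places the tuple $(w_1,\dots,w_{i-1},w,w_{i+1},\dots,w_{\arity})$ in $R(\BT_{\BH})$, where for $j \neq i$ the walk $w_j$ is $w$ extended by $i,(R,\ba),j,a_j$. Each such $w_j$ is a legitimate walk of length one more than that of $w$, so maximality of $w$ forces $w_j \notin B$, i.e., $\phi(w_j) = \rho_{\BH}(w_j) = a_j$. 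Applying the homomorphism $\phi$ to the displayed tuple of $R(\BT_{\BH})$ therefore gives $(a_1,\dots,a_{i-1},b,a_{i+1},\dots,a_{\arity}) \in R(\BH)$, which is exactly what domination of $a$ by $b$ requires.

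Finally, since $a \neq b$, the element $a$ is dominated in $\BH$, so $J$-non-foldability yields $a \in J$. Hence $w \in \rho_{\BH}^{-1}(J) \subseteq U$, contradicting $w \in B$ (as $B$ is disjoint from $U$). This contradiction shows $\phi = \rho_{\BH}$.

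I do not expect a real obstacle here. The only points needing care are that the auxiliary walks $w_j$ are genuinely strictly longer than $w$ (so that the extremal choice of $w$ applies to them) and that they are valid walks at all — both are immediate from the definition of a walk and of the forest $\BT_{\BH}$. One could equivalently run the argument as a downward induction on length starting from the longest element of the finite set $T_{\BH}\setminus U$, but the single extremal choice of $w$ is the cleanest formulation.
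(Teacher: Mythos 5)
Your proof is correct and follows essentially the same route as the paper: the core step — showing that the value $\phi(w)$ dominates the ending point of $w$ by applying the homomorphism to the tuple of $R(\BT_{\BH})$ built from the one-step extensions of $w$, then invoking $J$-non-foldability together with $\rho_{\BH}^{-1}(J)\subseteq U$ — is exactly the paper's argument. The only difference is packaging: you pick a maximal-length disagreement and derive a contradiction, whereas the paper runs the equivalent downward induction on walk length.
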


\begin{proof}
Given $n \geq 0$, let $W_n$ be the set of walks of length at least $n$ in $\BH$ (notice that $W_n \subseteq W_{n-1}$ and $W_0 = T_\BH$). We shall show that any $\rho'\in\Hom(\BT_{\BH},\BH)$ that agrees with $\rho_{\BH}$ in $W_n \cup \rho_\BH^{-1}(J)$ for arbitrary $n$ also agrees with $\rho_\BH$ in $W_{n-1}$ . Let $w$ be any walk of length $n-1$ and let $a$ be its ending point. We first show that $\rho'(w)$ dominates $a$ in $\BH$. Indeed, let $R\in \tau$ and let $\ba=(a_1,\dots,a_k)\in R(\BH)$, where $a$ appears, say, in the $i$th coordinate. By construction, $R(\BT_{\BH})$ contains  the tuple $\bw=(w_1,\dots,w_{i-1},w,w_{i+1},\dots,w_k)$, where for every $j \neq i$, $w_j$ is obtained by concatenating $i, (R,\ba), j, a_j$ at the end of $w$. Since $w_j$ has length $n$ for every $j\neq i$, it follows by assumption that $\rho'(w_j)=a_j$. That is, $\rho'(\bw)$ (which must be a tuple in $R(\BH)$) is obtained by replacing, in $\ba$, $a_i$ by $\rho'(w)$.

Hence, we have shown that $\rho'(w)$ dominates $a$ in $\BH$. Since $\BH$ is $J$-non-foldable it follows that either $\rho'(w)=a$ (and, hence, $\rho'(w)=\rho_\BH(w)$) or $a\in J$ (and, hence, $\rho'(w)=\rho_\BH(w)$ since $w\in \rho_\BH^{-1}(J)$). To conclude the proof it is only necessary to observe that, since $U$ is a cofinite set containing $\rho_\BH^{-1}(J)$, it follows that any homomorphism that agrees with $\rho_{\BH}$ in $U$, agrees as well in $W_n \cup \rho_\BH^{-1}(J)$ for sufficiently large $n$.
\end{proof}

\subsection{Graphs of homomorphisms}
\label{subsection:graphHom}

Let $\BG$ and $\BH$ be $\tau$-structures and suppose that $\BH$ is finite. We define two different kinds of graphs with vertex set $\Hom(\BG,\BH)$.

The first notion has been heavily studied, from an algorithmic perspective, in the context of the so-called \emph{CSP reconfiguration problem} (see \cite{Hatanaka18} and the references therein) and also from a structural point of view in the special case when $\BG$ and $\BH$ are graphs \cite{1-brightwell,1-wrochna}. We define $C(\BG,\BH)$ as the (reflexive) graph with vertex set $\Hom(\BG,\BH)$ such that for every $\phi,\psi \in\Hom(\BG,\BH)$, $\phi$ and $\psi$ are adjacent if and only if $\phi$ and $\psi$ differ in at most one value, i.e., there exists at most one $x \in G$ such that $\phi(x) \neq \psi(x)$. More generally, for any $n \geq 1$ we can define $C_n(\BG,\BH)$ on $\Hom(\BG,\BH)$ by declaring $\phi$ and $\psi$ adjacent if they differ in at most $n$ values (in particular, $C(\BG,\BH) = C_1(\BG,\BH)$).

A second notion of graph of homomorphisms appears in \cite{MR2357493} and uses the notion of \emph{links}. The {\bf $1$-link} $\BL$ (with signature $\tau$) is the $\tau$-structure with universe $\{0,1\}$, where $R(\BL) = \{0,1\}^k$ for every $\arity$-ary $R\in\tau$. Define a (di)graph $L(\BG,\BH)$ with vertex set $\Hom(\BG,\BH)$ as follows: set $\phi \rightarrow \psi$---i.e., a directed edge starting from $\phi$ and ending in $\psi$---if for any $\arity$-ary $R\in\tau$ and any $(x_1,\dots,x_k) \in R(\BG)$, we have that $(\gamma_1(x_1),\dots,\gamma_k(x_k)) \in R(\BH)$ whenever $\gamma_1,\dots,\gamma_k\in \{\phi,\psi\}$. Alternatively, one can say that $\phi$ and $\psi$ are joined by a directed edge if there exists a homomorphism from $\BL$ to $\Hom(\BG,\BH)$ (see \cite[Section 5.2]{MR2357493}), mapping $0$ to $\phi$ and $1$ to $\psi$. Notice that the symmetry in the definition of $1$-link implies that $L(\BG,\BH)$ is, in fact, an undirected graph.

We say that a graph of homomorphisms (i.e., $C(\BG,\BH)$, $C_n(\BG,\BH)$, $L(\BG,\BH)$) is \emph{connected} if for every pair of maps $\phi,\psi \in \Hom(\BG,\BH)$ agreeing on all but finitely many elements there exists a walk that joins them. Notice that if $\BG$ is finite, this coincides with the usual notion of connectedness in graph theory.
 
Clearly, $C_n(\BG,\BH)$ is a subgraph of $C_{n+1}(\BG,\BH)$. In contrast, $C_n(\BG,\BH)$ and $L(\BG,\BH)$ are not included in one another in general. However, we will establish (see Lemma \ref{le:conn}) a meaningful relationship between both of them, by characterizing the connectivity properties of one in terms of the other.

Note that there is a one-to-one correspondence between the elements in $\Hom(\BL \times \BG,\BH)$ and the edges of $L(\BG,\BH)$. More generally, for $\ell \geq 1$ we define the {\bf $\ell$-link} $\BL_\ell$ (with signature $\tau$) as the $\tau$-structure with universe $\{0,1,\dots,\ell\}$, where $R(\BL_\ell) = \cup_{i=0}^{\ell-1}\{i,i+1\}^k$, for every $\arity$-ary $R\in\tau$. In other words, the $\ell$-link is a sequence of $1$-links with their endpoints identified. Then the following result
is immediate:

\begin{lemma}
\label{le:links}
For every map $\phi:\{0,1,\dots,\ell\}\times G\rightarrow H$ and every $1\leq i\leq \ell$, let $\phi(i):G\rightarrow H$ be the map defined by $\phi(i)(x) \mapsto \phi(i,x)$ for $x\in G$. Then, $\phi\in\Hom(\BL_\ell \times \BG,\BH)$ if and only if $\phi(0),\dots,\phi(\ell)$ is a {walk} in $L(\BG,\BH)$.
\end{lemma}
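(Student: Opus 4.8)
The plan is to prove this by directly unwinding the three definitions involved—the product $\BL_\ell\times\BG$, the notion of homomorphism, and the edge relation of $L(\BG,\BH)$—and checking that the resulting quantified statements coincide. No genuine mathematical difficulty is expected; the whole content is careful bookkeeping of quantifiers, and the one point that deserves attention is that the clause $R(\BL_\ell)=\bigcup_{i=0}^{\ell-1}\{i,i+1\}^k$ is used in the correct direction.

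First I would record what membership in $R(\BL_\ell\times\BG)$ means: a tuple $((c_1,x_1),\dots,(c_k,x_k))$ belongs to $R(\BL_\ell\times\BG)$ precisely when $(x_1,\dots,x_k)\in R(\BG)$ and $(c_1,\dots,c_k)\in R(\BL_\ell)$, and by definition of the $\ell$-link the latter holds if and only if there is an index $i\in\{0,\dots,\ell-1\}$ with $c_1,\dots,c_k\in\{i,i+1\}$. Consequently $\phi\in\Hom(\BL_\ell\times\BG,\BH)$ if and only if, for every $k$-ary $R\in\tau$, every $i\in\{0,\dots,\ell-1\}$, every $(c_1,\dots,c_k)\in\{i,i+1\}^k$, and every $(x_1,\dots,x_k)\in R(\BG)$, we have $(\phi(c_1,x_1),\dots,\phi(c_k,x_k))\in R(\BH)$.

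The key rewriting is then immediate: $\phi(c_j,x_j)=\phi(c_j)(x_j)$, and as $c_j$ ranges over $\{i,i+1\}$ the map $\phi(c_j)$ ranges over $\{\phi(i),\phi(i+1)\}$, with every choice of $\gamma_j\in\{\phi(i),\phi(i+1)\}$ arising in this way. So, for a fixed $i$, the displayed condition says exactly that for every $k$-ary $R$, every $(x_1,\dots,x_k)\in R(\BG)$, and every $\gamma_1,\dots,\gamma_k\in\{\phi(i),\phi(i+1)\}$ one has $(\gamma_1(x_1),\dots,\gamma_k(x_k))\in R(\BH)$, which is precisely the definition of the edge $\phi(i)\rightarrow\phi(i+1)$ in $L(\BG,\BH)$. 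Ranging over all $i\in\{0,\dots,\ell-1\}$, this is the statement that $\phi(0),\phi(1),\dots,\phi(\ell)$ is a walk in $L(\BG,\BH)$.

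The last thing to verify—the only place where one must be slightly careful, though it is still routine—is that a walk in $L(\BG,\BH)$ also requires each $\phi(i)$ to be a vertex, i.e.\ an element of $\Hom(\BG,\BH)$. If $\phi$ is a homomorphism this follows by specializing the condition above to the monochromatic tuple $c_1=\dots=c_k=i$, which is legitimate since $\ell\ge 1$ forces $i$ to lie in some consecutive pair $\{i,i+1\}$ or $\{i-1,i\}$; conversely, since $L(\BG,\BH)$ is reflexive (a loop at $\phi(i)$ existing exactly when $\phi(i)\in\Hom(\BG,\BH)$), this vertex condition is already implicit in ``walk''. Putting the two implications together gives the equivalence, completing the proof.
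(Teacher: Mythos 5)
Your proof is correct: the paper states this lemma without proof (it is declared ``immediate''), and your argument---unwinding the definitions of the product $\BL_\ell\times\BG$, of homomorphism, and of the edge relation of $L(\BG,\BH)$, plus the check via monochromatic tuples $c_1=\dots=c_k=i$ that each $\phi(i)$ is indeed a vertex of $L(\BG,\BH)$---is exactly the routine verification the authors intend.
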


\subsection{Mixing properties}

Given $\tau$-structures $\BG$ and $\BH$, it is useful to study properties in $\Hom(\BG,\BH)$ that allow us to \emph{glue together} partially defined homomorphisms. This kind of properties are usually referred in the literature as \emph{mixing properties}.

We say that $\Hom(\BG,\BH)$ is {\bf $(V,W)$-mixing} for $V,W \subseteq G$, if for every $\phi,\psi \in \Hom(\BG,\BH)$, there exists a map $\gamma \in \Hom(\BG,\BH)$ that agrees with $\phi$ on $V$ and agrees with $\psi$ on $W$. Given $\gap \geq 0$, we say that $\Hom(\BG,\BH)$ is {\bf strongly irreducible with gap $\gap$} if it is $(V,W)$-mixing for every $V,W$ such that $\dist(V,W) \geq \gap$. We say that $\Hom(\BG,\BH)$ is {\bf strongly irreducible} if it is strongly irreducible with gap $\gap$ for some $\gap$.

A strengthening of strong irreducibility is the following property, introduced in \cite{1-briceno}. Given $\gap \geq 0$, we say that $\Hom(\BG,\BH)$ is {\bf topologically strong spatial mixing (TSSM) with gap $\gap$} if for every $V,W,S \subseteq G$ such that $\dist(V,W) \geq \gap$ and for all $\phi,\psi \in \Hom(\BG,\BH)$ that agree on $S$, there exists $\gamma \in \Hom(\BG,\BH)$ that agrees with $\phi$ on $V \cup S$ and agrees with $\psi$ on $S \cup W$. We say that $\Hom(\BG,\BH)$ is {\bf topologically strong spatial mixing} if it is TSSM with gap $\gap$ for some $\gap$.

Clearly, $\Hom(\BG,\BH)$ is TSSM only if $\Hom(\BG,\BH)$ is strongly irreducible but not vice versa (see \cite{1-briceno, 2-briceno} for some counterexamples).

An antithesis of having good mixing properties is the existence of homomorphisms which are \emph{frozen}. We say that $\phi \in \Hom(\BG,\BH)$ is a {\bf frozen homomorphism} if for any cofinite set $U \subseteq G$, the only homomorphism $\psi \in \Hom(\BG,\BH)$ such that $\left.\psi\right|_U = \left.\phi\right|_U$ is $\psi = \phi$ itself. Notice that Lemma \ref{le:universaltree}, when $J = \emptyset$, says that $\rho_{\BH}$ is a frozen homomorphism in $\Hom(\BT_{\BH},\BH)$.

\section{Main theorems}
\label{section3}

In this section we present the two main theorems of our work, which characterize in several ways a special class of relational structures. Both theorems consist of a generalization of some of the equivalences characterizing \emph{dismantlable graphs} that appear in \cite[Theorem 4.1]{1-brightwell}---which were developed only for the case of graphs---in two directions. First, Theorem \ref{the:mainsimple} (or the \emph{simple theorem}) extends \cite{1-brightwell} from graphs to arbitrary relational structures. Second, Theorem \ref{the:main} (or the \emph{refined theorem}), shows how the equivalences in Theorem \ref{the:mainsimple} can be rephrased in terms of stronger properties with respect to special subsets of the universe of the given relational structure.

\subsection{The case of graphs} 

The following theorem is a rephrasing of the equivalences that appear in \cite[Theorem 4.1]{1-brightwell} which are relevant to us. We will use this as a prototypical example of the kind of results that we are aiming for, where we split the properties in 3 main categories: (A) dismantlability, (B) connectedness, and (C) mixing.

We suppose that graphs are allowed to have loops. Notice that a loopless graph cannot be dismantlable.

\begin{theorem}[{\cite[Theorem 4.1]{1-brightwell}}]
\label{the:graph}
Let $\BH$ be a finite graph. The following are equivalent:
\begin{enumerate}
\item[(A)] $\BH$ is dismantlable;
\item[(B)] $C(\BG,\BH)$ is connected for every finite graph $\BG$; and
\item[(C)] there exists $\gap \geq 0$ such that $\Hom(\BG,\BH)$ is strongly irreducible with gap $\gap$ for every graph $\BG$.
\end{enumerate}
\end{theorem}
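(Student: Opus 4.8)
The plan is to prove (A)$\Rightarrow$(C), (A)$\Rightarrow$(B), (C)$\Rightarrow$(A) and (B)$\Rightarrow$(A), so that the equivalences (A)$\Leftrightarrow$(C) and (A)$\Leftrightarrow$(B) follow; the forward implications will be explicit constructions along a dismantling sequence of $\BH$, the two implications into (A) contrapositives exploiting the stiff core of $\BH$ and Lemma~\ref{le:universaltree}. One may assume throughout that $\BH$ has an edge, since otherwise every vertex is dominated, $\BH$ is dismantlable, and all three properties hold for trivial reasons. A preliminary remark used repeatedly: a fold $a\mapsto b$ of $\BH$ lifts to folds $(a,c)\mapsto(b,c)$ and $(c,a)\mapsto(c,b)$ of $\BH^{2}$ (domination in a product is coordinatewise, and it is preserved by passing to induced substructures that retain the dominating element), so a dismantling $\BH=\BH_{0},\dots,\BH_{m}=\{*\}$ lifts to a dismantling $\BH^{2}=\BK_{0},\dots,\BK_{m'}=\{(*,*)\}$, and $*$ carries a loop.

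For (A)$\Rightarrow$(C), fix the dismantling above, with composite retractions $\sigma_{j}\colon\BH\to\BH_{j}$ (so $\sigma_{0}=\mathrm{id}$, $\sigma_{m}\equiv *$), and set $\gap=2m+2$. Given $V,W$ with $\dist(V,W)\ge\gap$ and $\phi,\psi\in\Hom(\BG,\BH)$, write $d(x)=\dist(x,V)$, $e(x)=\dist(x,W)$ and define $\gamma(x)=\sigma_{\min(d(x),m)}(\phi(x))$ if $d(x)\le e(x)$ and $\gamma(x)=\sigma_{\min(e(x),m)}(\psi(x))$ otherwise. Then $\gamma|_{V}=\phi|_{V}$ and $\gamma|_{W}=\psi|_{W}$, and $\gamma$ is a homomorphism: on an edge of $\BG$ whose endpoints lie on the same side the fold index changes by at most one and one step of folding carries an edge of $\BH_{j}$ to an edge of $\BH$ by domination, while on an edge crossing sides the inequality $\gap>2m$ forces both indices to equal $m$, hence both endpoints go to $*$ and $(*,*)$ is an edge. (Equivalently one can run this argument on $(\phi,\psi)\colon\BG\to\BH^{2}$, folding toward the diagonal away from both $V$ and $W$ and then composing with $\pi_{1}$ near $V$ and $\pi_{2}$ near $W$.)

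For (A)$\Rightarrow$(B), with $\BG$ finite, use the lifted dismantling $\BH^{2}=\BK_{0},\dots,\BK_{m'}=\{(*,*)\}$, say $r_{j}$ folding $c_{j}\mapsto d_{j}$. Given $\phi,\psi\in\Hom(\BG,\BH)$, put $\eta_{0}=(\phi,\psi)$, $\eta_{j+1}=r_{j}\circ\eta_{j}$ and $\phi_{j}=\pi_{1}\circ\eta_{j}\in\Hom(\BG,\BH)$, so $\phi_{0}=\phi$ and $\phi_{m'}$ is the constant map at $*$. Consecutive maps $\phi_{j},\phi_{j+1}$ differ only on $S_{j}=\eta_{j}^{-1}(c_{j})$, where they take the constant values $\pi_{1}(c_{j})$ and $\pi_{1}(d_{j})$; since $d_{j}$ dominates $c_{j}$ in $\BK_{j}$, recolouring the vertices of $S_{j}$ one at a time from $\pi_{1}(c_{j})$ to $\pi_{1}(d_{j})$ passes only through homomorphisms (if $S_{j}$ contains adjacent vertices then $(c_{j},c_{j})$ is an edge of $\BK_{j}$, which supplies the remaining edges). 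Hence each $\phi_{j}$ is joined to $\phi_{j+1}$ in $C(\BG,\BH)$, so $\phi$ — and symmetrically $\psi$ — is joined to the constant map at $*$, and $C(\BG,\BH)$ is connected.

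For the implications into (A), suppose $\BH$ is not dismantlable. Greedy dismantling (Lemma~\ref{le:greedy}) produces a substructure $\BH^{*}$ with no dominated element and $|H^{*}|\ge 2$, a retract of $\BH$ via some $r\colon\BH\to\BH^{*}$; postcomposing homomorphisms with $r$ transfers connectedness of $C(\BG,\BH)$ and strong irreducibility of $\Hom(\BG,\BH)$ to $\BH^{*}$, so we may assume $\BH$ is $\emptyset$-non-foldable. Then Lemma~\ref{le:universaltree} (with $J=\emptyset$) says the label map $\rho_{\BH}\colon\BT_{\BH}\to\BH$ is frozen. For $\neg$(C): $\BT_{\BH}$ is locally finite, so for any $\gap$ the set of walks at distance $\le\gap$ from a fixed walk $w_{0}$ is finite; choosing a homomorphism $\psi$ disagreeing with $\rho_{\BH}$ at $w_{0}$, with $V$ the cofinite complement of that set and $W=\{w_{0}\}$, no $\gamma$ can agree with $\rho_{\BH}$ on $V$ (which forces $\gamma=\rho_{\BH}$) and with $\psi$ on $W$, so $\Hom(\BT_{\BH},\BH)$ refutes strong irreducibility with gap $\gap$, for every $\gap$ (in the degenerate case $\Hom(\BT_{\BH},\BH)=\{\rho_{\BH}\}$ one replaces $\BT_{\BH}$ by a chain-like $\BG$ built from $\BH$ carrying two incompatible homomorphisms). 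For $\neg$(B): one must exhibit a \emph{finite} $\BG$ — a sufficiently deep truncation of $\BT_{\BH}$, or a power of $\BH$ — carrying two homomorphisms into $\BH$ lying in different components of $C(\BG,\BH)$, the obstruction again stemming from the absence of domination in $\BH$. I expect this last step, producing the right finite witness (as opposed to the infinite $\BT_{\BH}$), to be the main difficulty; everything else is bookkeeping around the dismantling sequence and Lemma~\ref{le:universaltree}.
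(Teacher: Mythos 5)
Your implications (A)$\Rightarrow$(C), (A)$\Rightarrow$(B) and (C)$\Rightarrow$(A) are essentially sound: the first two are graph specializations of the strategy behind Lemmas \ref{le:retraction2}, \ref{le:connectedness} and \ref{le:interpolate}, and the third runs on the same engine as Lemma \ref{le:free-tree}, namely Lemma \ref{le:universaltree} applied to the frozen label map of a stiff retract. The genuine gap is (B)$\Rightarrow$(A): you never produce the finite witness, and without that implication the equivalence of (B) with (A) and (C) is not established. The missing idea is concrete: the witness is $\BH^2$ with the two projections $\pi_1,\pi_2\in\Hom(\BH^2,\BH)$ (more precisely the finite graph $\BL\times\BH^2$, which is legitimate since loops are allowed). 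Connectedness of $C(\BL\times\BH^2,\BH)$ yields, by the link argument of Lemma \ref{le:conn}, a walk from $\pi_1$ to $\pi_2$ in $L(\BH^2,\BH)$; Lemma \ref{le:path-dism} then composes such a walk with the retraction onto the stiff symmetric structure $\BK$ obtained from $\BI^2$ ($\BI$ a stiff retract of $\BH$, Lemma \ref{le:symmetric}) to manufacture dominations forcing $\BK$ into the diagonal; and Lemma \ref{le:symmetricstructures} converts ``$\BI^2$ dismantles to a subset of its diagonal'' back into ``$\BI$ is dismantlable'', so the stiff $\BI$ is a single vertex and $\BH$ is dismantlable. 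A ``deep truncation of $\BT_\BH$'' does not obviously work; $\BH^2$ with the projections is how Brightwell and Winkler themselves close this loop, and carrying it out is the heart of the converse, not bookkeeping---which is why the paper isolates it as Lemmas \ref{le:conn} and \ref{le:path-dism}. (Note also that the paper does not reprove Theorem \ref{the:graph}: it cites Brightwell--Winkler and proves the general relational analogues in Section \ref{section4}.)

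Two smaller problems sit in your $\neg$(C) argument. You assert a homomorphism $\psi\in\Hom(\BT_\BH,\BH)$ with $\psi(w_0)\neq\rho_\BH(w_0)$ and offer only a vague fallback (``a chain-like $\BG$'') should none exist; in fact the degenerate case cannot occur, but this needs an argument: if $\BH$ is stiff with $|H|\geq 2$ then every vertex lies in some edge (an isolated vertex is vacuously dominated), adjacent elements of $\BT_\BH$ are walks whose lengths differ by exactly one, so the component of $w_0$ is bipartite via length parity, and one can map that component homomorphically onto a single suitably chosen edge of $\BH$ so that $w_0$ receives a value different from its endpoint, keeping $\rho_\BH$ on all other components. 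Second, $E(\BT_\BH)$ as defined in Section \ref{forest} is not symmetric, so $\BT_\BH$ is a digraph rather than a graph; since (C) quantifies over graphs you should pass to its symmetrization, which is harmless because $E(\BH)$ is symmetric (so the homomorphism set is unchanged) and distances and local finiteness are unaffected, so Lemma \ref{le:universaltree} still gives that $\rho_\BH$ is frozen. These two points are fillable; the missing (B)$\Rightarrow$(A) step is the real defect.
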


\subsection{First theorem: A parallel with the graph case}

The following theorem shows that different dismantling, connectedness, and mixing notions are equivalent. It can be seen as a generalization of Theorem \ref{the:graph} to relational structures.

\begin{theorem}
\label{the:mainsimple}
Let $\BH$ be a finite $\tau$-structure with universe $H$. Then the following are equivalent:
\begin{enumerate}
\item[(A1s)] $\BH$ dismantles to a substructure $\BI$ such that $\BI^2$ dismantles to its diagonal\label{itemdismantlesrestricted};
\item[(A2s)] $\BH^2$ dismantles to a substructure of its diagonal;
\item[(B1s)] $C(\BG,\BH)$ is connected for every locally finite $\tau$-structure $\BG$\label{itemC1};
\item[(B2s)] there exists some $\diff \geq 1$ such that $C_\diff (\BG,\BH)$ is connected for every finite $\tau$-structure $\BG$\label{itemCexists}; \item[(B3s)] $C(\BL \times \BH^2,\BH)$ is connected\label{itemCparticular};
\item[(B4s)] $L(\BG,\BH)$ is connected for every finite $\tau$-structure $\BG$\label{itemLforall};
\item[(B5s)] the projections $\pi_1$ and $\pi_2$ are connected in $L(\BH^2,\BH)$\label{itemLparticular};
\item[(C1s)] there exists $\gap \geq 0$ such that $\Hom(\BG,\BH)$ is strongly irreducible with gap $\gap$ for every $\tau$-structure $\BG$; and
\item[(C2s)] there exists $\gap \geq 0$ such that $\Hom(\BT_{\BH^2},\BH)$ is $(\{x\},W)$-mixing for all $x \in T_{\BH^2}$ and $W \subseteq T_{\BH^2}$ with $\dist(x,W) \geq \gap$.
\end{enumerate}
\end{theorem}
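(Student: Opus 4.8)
The plan is to establish the cycle of implications among the nine conditions, roughly following the structure $(A2s)\Rightarrow(A1s)\Rightarrow(C1s)\Rightarrow(C2s)\Rightarrow(B5s)\Rightarrow(B3s)\Rightarrow(B2s)\Rightarrow(B1s)\Rightarrow(B4s)\Rightarrow(B5s)$, together with a separate loop closing back to $(A2s)$; in practice several of these arrows are nearly immediate and the real content lives in a few key steps. I would first dispose of the easy implications. The equivalence $(A1s)\Leftrightarrow(A2s)$ should be a routine diagram-chase: if $\BH$ dismantles to $\BI$ via folds $r$, then $\BH^2$ dismantles to $\BI^2$ via the product folds $r\times\mathrm{id}$ and $\mathrm{id}\times r$ (using Lemma~\ref{le:greedy} to interleave them greedily), and conversely if $\BH^2$ dismantles to a substructure $\BK$ of its diagonal, then projecting through $\pi_1$ shows $\BH$ dismantles to $\BI:=\pi_1(\BK)$, and one checks $\BI^2$ dismantles to its diagonal because the image of $\BK$ under the obvious map sits inside $\diagonal(\BI^2)$ and domination is preserved. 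For the $C$-to-$C$ and $C$-to-$L$ links, I would invoke the promised Lemma~\ref{le:conn} relating connectedness of $C_n(\BG,\BH)$ and $L(\BG,\BH)$; the chain $(B1s)\Rightarrow(B2s)$ for finite $\BG$ is trivial since $C=C_1$, and $(B2s)\Rightarrow(B3s)$ is just specialization to $\BG=\BL\times\BH^2$, noting this structure is finite.

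The heart of the argument is the passage between dismantlability and mixing, and I expect $(A2s)\Rightarrow(C1s)$ to be the main technical obstacle. The idea is: given that $\BH^2$ dismantles to a substructure of its diagonal, take $\gap$ to be (roughly) the length of the associated dismantling sequence, and given $\phi,\psi\in\Hom(\BG,\BH)$ with $\dist(V,W)\geq \gap$, form the pair map $(\phi,\psi)\in\Hom(\BG,\BH^2)$; then compose with the retraction $\BH^2\to\diagonal$-substructure, but applied \emph{locally}, pushing the folds one layer at a time so that after $\gap$ steps the vertices at distance $\geq\gap$ from $W$ still carry the $\phi$-value, those far from $V$ carry the $\psi$-value, and a valid homomorphism $\gamma$ interpolates in between. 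Making this ``local retraction'' rigorous — controlling exactly which vertices have been folded after $j$ steps as a function of their distance to $V$ and $W$, and checking the homomorphism condition survives at each stage — is the delicate part; this is essentially the relational-structure analogue of the graph argument in \cite{1-brightwell}, but the presence of $k$-ary relations complicates the bookkeeping of domination. The implication $(C1s)\Rightarrow(C2s)$ is then a specialization to the particular structure $\BT_{\BH^2}$ (every forest-of-walks structure has a natural metric, and $(\{x\},W)$-mixing is a special case of $(V,W)$-mixing).

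For the return journey $(C2s)\Rightarrow(B5s)$ and the rest, the plan is to exploit Lemma~\ref{le:universaltree} and Lemma~\ref{le:links}. From $(C2s)$, I would argue that $\BH^2$ cannot have a frozen-like obstruction preventing the two projections from being linked: pass to $\BT_{\BH^2}$, use $(\{x\},W)$-mixing to interpolate between the two homomorphisms $\pi_1\circ\rho$ and $\pi_2\circ\rho$ (where $\rho=\rho_{\BH^2}$ is the label map), and then project back down to $\BH^2$ — Lemma~\ref{le:links} translates an $\BL_\ell$-homomorphism into a walk in $L(\BH^2,\BH)$, yielding a walk from $\pi_1$ to $\pi_2$. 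Then $(B5s)\Rightarrow(B3s)$: a walk from $\pi_1$ to $\pi_2$ in $L(\BH^2,\BH)$ gives, via Lemma~\ref{le:links}, an element of $\Hom(\BL_\ell\times\BH^2,\BH)$, which one converts into a path in $C(\BL\times\BH^2,\BH)$ by changing one coordinate at a time (the single new coordinate of $\BL$ against each vertex of $\BH^2$ in turn). The implications $(B3s)\Rightarrow(A2s)$ would be proved contrapositively: if $\BH^2$ does \emph{not} dismantle to a substructure of its diagonal, then by the greedy dismantling property $\BH^2$ dismantles to some $J$-non-foldable structure $\BK$ that contains a non-diagonal element, and from $\BK$ one builds — via the forest-of-walks $\BT_{\BK}$ and Lemma~\ref{le:universaltree} — a frozen homomorphism witnessing a disconnection of $C(\BL\times\BH^2,\BH)$, contradicting $(B3s)$. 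Finally $(B4s)\Leftrightarrow(B5s)$ mirrors $(B1s)$-style universality-versus-particular-instance arguments, using that $\BH^2$ and its label cover are ``universal'' test objects. The one genuinely hard obstacle remains the local-folding construction in $(A2s)\Rightarrow(C1s)$; everything else is reduction to the lemmas already in hand.
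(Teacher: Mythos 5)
Your proposal gets one half of the picture right: the local-folding/retraction construction you describe for (A1s)$\Rightarrow$(C1s) is indeed how the paper proceeds (Lemmas \ref{le:retraction}, \ref{le:retraction2}, \ref{le:interpolate}, and similarly \ref{le:connectedness} for (B1s)), and (C1s)$\Rightarrow$(C2s), (B1s)$\Rightarrow$(B2s), (B4s)$\Rightarrow$(B5s) are genuinely immediate. But the return journey from the weak, single-instance statements back to dismantlability is where your plan has real gaps. First, your claimed easy step ``(B2s)$\Rightarrow$(B3s) is just specialization'' is false: (B2s) only gives connectedness of $C_n$ for \emph{some} $n$, whereas (B3s) is about $C_1$ of a specific structure, and passing from $C_n$-adjacency to single-site adjacency is part of the theorem's content, not a specialization. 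Worse, your cycle needs (B3s)$\Rightarrow$(B2s)$\Rightarrow$(B1s) and (B5s)$\Rightarrow$(B4s), i.e., deducing universal statements over all (locally finite) $\BG$ from one finite instance, and for these you offer only an appeal to ``universality'' of $\BH^2$; the paper has no such shortcut and instead funnels everything through (A1): Lemma \ref{le:conn} turns $C_n$-connectivity of $\BL_n\times\BG$ into connectivity of $L(\BG,\BH)$ (giving (B2)$\Rightarrow$(B4) and (B3)$\Rightarrow$(B5)), and Lemma \ref{le:path-dism} extracts, from a walk joining $\pi_1$ to $\pi_2$ in $L(\BH^2,\BH)$, a sequence of endomorphisms of the minimal symmetric non-foldable structure $\BK$ and hence a forbidden fold, proving (B5)$\Rightarrow$(A1); all universal statements are then re-derived from (A1).

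Two further steps of yours would fail as sketched. For (C2s)$\Rightarrow$(B5s), ``project back down to $\BH^2$'' is not available: elements of $\Hom(\BT_{\BH^2},\BH)$ do not factor through the label map $\rho_{\BH^2}$, so a mixing witness on the forest of walks cannot be pushed down to a walk from $\pi_1$ to $\pi_2$ in $L(\BH^2,\BH)$; the paper instead proves (C2)$\Rightarrow$(A1) by contradiction (Lemma \ref{le:free-tree}), combining the symmetric $\diagonal(J^2)$-non-foldable $\BK$ of Lemma \ref{le:symmetric} with the fact that the label homomorphism of $\BT_{\BK}$ is frozen (Lemma \ref{le:universaltree}); your idea of using a frozen homomorphism to ``witness a disconnection of $C(\BL\times\BH^2,\BH)$'' has the same problem, since a frozen map on an infinite forest does not obviously disconnect a particular finite instance. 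Finally, your (A2s)$\Rightarrow$(A1s) argument ``projecting through $\pi_1$'' is not a proof: a dismantling sequence of $\BH^2$ removes non-diagonal pairs that do not project to folds of $\BH$. The paper's Lemma \ref{le:distmantledrestricted} instead looks at the first step where a diagonal element $(a,a)$ is folded, shows the dominating element yields a fold $a\mapsto b$ in $\BH$ itself, and uses the greedy Lemma \ref{le:greedy} to restart the dismantling from $\BI^2$; some argument of this kind (or another route through the equivalences) is needed to close that equivalence.
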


As a consequence of our results we have the following lemma.

\begin{lemma}
\label{lem:graphdism}
A graph $\BH$ is dismantlable if and only if $\BH^2$ dismantles to a subset of its diagonal.
\end{lemma}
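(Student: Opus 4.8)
The plan is to derive the lemma by identifying the classical notion of dismantlability with condition (A2s) of Theorem~\ref{the:mainsimple}, using Theorem~\ref{the:mainsimple} in one direction and the graph theorem~\ref{the:graph} in the other. Throughout, $\BH$ is viewed as a finite $\tau$-structure with $\tau=\{E\}$ and $E(\BH)$ symmetric, as in both of those theorems.

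For the forward implication, suppose $\BH$ is dismantlable, so it dismantles to a $\tau$-structure $\BI$ whose universe is a singleton $\{v\}$. Then $\BI^2$ has universe $\{(v,v)\}$, which is exactly $\diagonal(\BI^2)$, so $\BI^2$ dismantles to its diagonal via the empty dismantling sequence; this is precisely condition (A1s) of Theorem~\ref{the:mainsimple}. Invoking that theorem yields condition (A2s): $\BH^2$ dismantles to a substructure of its diagonal. Since every $\tau$-structure appearing in a dismantling sequence is the substructure of the initial one induced by a subset of its universe (by transitivity of the ``induced substructure'' relation), this target is the substructure of $\BH^2$ induced by a subset of $\diagonal(H^2)$ — that is, $\BH^2$ dismantles to a subset of its diagonal. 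One can also argue this direction directly, without Theorem~\ref{the:mainsimple}: if ``$b$ dominates $a$'' witnesses a fold of a graph $G$, then ``$(b,c)$ dominates $(a,c)$'' and ``$(c,b)$ dominates $(c,a)$'' witness folds of $G^2$ for every vertex $c$, because neighbourhoods in the product are products of neighbourhoods; iterating the dismantling sequence of $\BH$ coordinatewise then dismantles $\BH^2$ all the way down to $\BI^2=\{(v,v)\}$.

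For the reverse implication, suppose $\BH^2$ dismantles to a subset of its diagonal; this is condition (A2s). By Theorem~\ref{the:mainsimple} it is equivalent to condition (B1s): $C(\BG,\BH)$ is connected for every locally finite $\tau$-structure $\BG$. Specializing $\BG$ to range over finite graphs (each of which is, in particular, a locally finite $\tau$-structure over $\tau=\{E\}$) gives exactly hypothesis (B) of Theorem~\ref{the:graph}, and its implication (B)$\Rightarrow$(A) then yields that $\BH$ is dismantlable.

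I do not expect a genuine obstacle: the content is the two reductions above, and the only care needed is bookkeeping — matching the ``subset of its diagonal'' phrasing of the lemma with the ``substructure of its diagonal'' phrasing of (A2s) (handled by the remark that dismantling yields induced substructures), and checking that the quantifier ``for every locally finite $\tau$-structure'' in (B1s) legitimately restricts to ``for every finite graph'' so that Theorem~\ref{the:graph} applies. It is worth noting that a self-contained proof of the reverse implication avoiding Theorem~\ref{the:graph} is less convenient than it first appears: reducing to a stiff core and arguing about stiff cores of $\BH^2$ breaks down, since a retract of a graph need not share its stiff core, so routing the reverse implication through the connectedness characterization is the natural choice.
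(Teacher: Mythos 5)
Your proof is correct, and it follows the same overall strategy as the paper's proof of Lemma \ref{lem:graphdism} (combine Theorem \ref{the:mainsimple} with Theorem \ref{the:graph}), but it pivots through different conditions. The paper's argument uses the equivalence (A2s)$\Leftrightarrow$(C1s) and then matches strong irreducibility with condition (C) of Theorem \ref{the:graph} in both directions; you instead get the forward direction from (A1s)$\Rightarrow$(A2s) (dismantling to a singleton trivially gives (A1s)) -- or from the direct coordinatewise-fold argument, which is exactly the ``only if'' step in the paper's Lemma \ref{le:symmetricstructures} and the first stage of Lemma \ref{le:retraction} -- and get the reverse direction from (A2s)$\Rightarrow$(B1s), restricting to finite graphs to land on condition (B) of Theorem \ref{the:graph}. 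Both of your bridging steps go in the ``restrict the quantifier'' direction, so your version quietly sidesteps a point the paper's one-line proof glosses over: matching (C) of Theorem \ref{the:graph}, which quantifies over graphs $\BG$, with (C1s), which quantifies over all $\tau$-structures $\BG$ (harmless here since $E(\BH)$ is symmetric, so one may symmetrize $\BG$ without changing $\Hom(\BG,\BH)$ or distances, but it is an implicit step). Your bookkeeping on ``subset of the diagonal'' versus ``substructure of the diagonal'' and on connectedness of $C(\BG,\BH)$ for finite $\BG$ agreeing with the graph-theoretic notion is fine. One side remark is too pessimistic: the paper does give a self-contained proof of both directions avoiding Theorems \ref{the:graph} and \ref{the:mainsimple}, namely Lemma \ref{le:symmetricstructures} (valid for any single symmetric relation): one folds $\BH$ greedily to a $\emptyset$-non-foldable $\BK$, observes via Remark \ref{pro:effective} that $\BK^2$ still dismantles to its diagonal, and shows that a dominated element of $\BK^2$ would force a dominated element of $\BK$, using symmetry of the relation to supply the needed tuple in the second coordinate; so the obstruction you anticipate for a direct argument does not in fact arise.
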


\begin{proof}
In Theorem \ref{the:mainsimple}, we prove that, for a finite $\tau$-structure $\BH$, we have that $\BH^2$ dismantles to a substructure of its diagonal if and only if there exists $\gap \geq 0$ such that $\Hom(\BG,\BH)$ is strongly irreducible with gap $\gap$ for all $\tau$-structures $\BG$. In particular, this applies if $\tau = \{E\}$, the usual binary relation of adjacency in graphs. Therefore, by Theorem \ref{the:graph}, these two properties are also equivalent to $\BH$ being dismantlable, and we conclude.  
\end{proof}

In other words, thanks to Lemma \ref{lem:graphdism}, at least in the realm of graphs, we can freely replace ``dismantlable'' by ``the square dismantles to a substructure of its diagonal''.

Still, it is not necessary to invoke Theorem \ref{the:mainsimple} and Theorem \ref{the:graph} in order to prove Lemma \ref{lem:graphdism}, as this can be directly proved even in a more general setting. 

A relation $R$ of arity $k$ is {\em symmetric}\footnote{Not to be confused with the notion of symmetry introduced in Section \ref{section2}.} if for every tuple $(t_1,\dots,t_k)\in R$, and every permutation $\sigma:\{1,\dots,k\}\rightarrow\{1,\dots,k\}$, 
$(t_{\sigma(1)},\dots,t_{\sigma(k)})\in R$.

\begin{lemma}
\label{le:symmetricstructures}
Assume that $\tau=\{R\}$ contains a unique relation symbol and let $\BH$ be a relational structure such that $R(\BH)$ is symmetric.
Then $\BH$ is dismantlable if and only if $\BH^2$ dismantles to a subset of its diagonal.
\end{lemma}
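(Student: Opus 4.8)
The plan is to prove the two implications separately. The forward direction is formal and does not even use symmetry; the converse is reduced to a structural fact about squares of \emph{stiff} (i.e.\ $\emptyset$-non-foldable) structures. For the forward direction, assume $\BH$ is dismantlable, with a dismantling sequence ending in a singleton $\{v\}$. The only ingredient I would use is the observation that if $b$ dominates $a$ in a structure $\BB$, then for every structure $\BA$ and every $c\in A$ the pair $(c,b)$ dominates $(c,a)$ in $\BA\times\BB$ and $(b,c)$ dominates $(a,c)$ in $\BB\times\BA$ --- a tuple of the product relation is a ``superposition'' of two independent tuples of the factor relations, so a fold in one factor induces a fold in the product (and one checks routinely that the intermediate structures are the expected induced substructures). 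Applying this along the given dismantling sequence of $\BH$, one first dismantles $\BH^2=\BH\times\BH$ to $\{v\}\times\BH$ by repeatedly folding the first coordinate, and then dismantles $\{v\}\times\BH$ to $\{(v,v)\}$ by folding the second coordinate. Hence $\BH^2$ dismantles to the singleton $\{(v,v)\}$, a subset of $\diagonal(H^2)$.

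For the converse I would first isolate the key lemma: \emph{if $R(\BH)$ is symmetric and $\BH$ is stiff, then $\BH^2$ is stiff}. A stiff structure with at least two elements has no element of degree $0$ (it would be dominated by every other element), so one may assume every element of $\BH$ has positive degree, the one-element case being trivial. I then claim that, under these hypotheses, $(c,d)$ dominates $(a,b)$ in $\BH^2$ if and only if $c$ dominates $a$ and $d$ dominates $b$ in $\BH$. The ``if'' direction is immediate. For ``only if'', fix attention on the first coordinate of tuples: since $b$ has positive degree there is some $\by\in R(\BH)$ with $b$ in the first place, so for every $\bx\in R(\BH)$ with $a$ in the first place the superposition of $\bx$ and $\by$ lies in $R(\BH^2)$ with $(a,b)$ in the first place; applying the domination of $(a,b)$ by $(c,d)$ and projecting onto first coordinates shows that $\bx$ with $a$ replaced by $c$ lies in $R(\BH)$, so $c$ dominates $a$ with respect to the first coordinate, and hence --- by symmetry of $R(\BH)$ --- with respect to every coordinate; symmetrically $d$ dominates $b$. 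Stiffness of $\BH$ now forces $c=a$ and $d=b$, so $(c,d)=(a,b)$ and $\BH^2$ is stiff. (The ``only if'' part, without the final use of stiffness, holds for any symmetric $\BH$ with no degree-$0$ element, and I would use it again below.)

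The technical core of the converse is the statement that \emph{if $\BH^2$ dismantles to the substructure $\BH^2|_{\{(a,a)\mid a\in D\}}$ induced by a set of diagonal elements, then $\BH$ dismantles to $\BH|_D$}. I would prove this by induction on $|H\setminus D|$ using two tools. First, the diagonal lift $\delta\colon a\mapsto(a,a)$ is a homomorphism $\BH\to\BH^2$ (an isomorphism onto $\diagonal(\BH^2)$): if a diagonal element $(a,a)$ is dominated in $\BH^2$ by some $(p,q)\neq(a,a)$, then feeding diagonal lifts of tuples of $R(\BH)$ through this domination and projecting shows that both $p$ and $q$ dominate $a$ in $\BH$, so $a$ is dominated in $\BH$. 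Second, the ``only if'' statement from the key lemma shows that if an off-diagonal $(u,v)$ is dominated in $\BH^2$ then $u$ or $v$ is dominated in $\BH$. Examining the first fold of a dismantling $\BH^2\to\BH^2|_{\{(a,a)\mid a\in D\}}$ (after reducing away degree-$0$ elements, and after possibly shrinking $D$ by dismantling inside the diagonal), it removes a dominated element of $\BH^2$ outside the target, which by the two tools produces a dominated element $z$ of $\BH$; using Lemma \ref{le:greedy} to discard, if necessary, further diagonal elements from the target, one arranges $z\notin D$, folds $z$ away in $\BH$, checks --- via the product trick of the forward direction --- that $(\BH|_{H\setminus\{z\}})^2$ still dismantles to the shrunken diagonal target, and invokes the induction hypothesis. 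Granting this, the converse is finished as follows: given that $\BH^2$ dismantles to some substructure of its diagonal, dismantle further inside the diagonal to assume $\BH^2$ dismantles to a \emph{stiff} substructure $\BK=\BH^2|_{\{(a,a)\mid a\in D\}}$; then $\BH|_D\cong\BK$ is stiff, and by the technical core $\BH$ dismantles to $\BH|_D$, so by the product trick $\BH^2$ dismantles to $(\BH|_D)^2$, which is stiff by the key lemma. Since a dismantling of a structure that reaches a stiff structure reaches the same one up to isomorphism (the greedy uniqueness recalled in the text), $\BK\cong(\BH|_D)^2$, whence $|D|=|D|^2$ and $|D|=1$; so $\BH|_D$ is a singleton and $\BH$ is dismantlable.

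The step I expect to be the main obstacle is the ``pushing down'' technical core. The diagonal-lift argument disposes cleanly of the case where the first fold of the dismantling of $\BH^2$ removes a diagonal element, but when it removes an off-diagonal element $(u,v)$ one only learns that $u$ or $v$ is dominated in $\BH$, and a careful bookkeeping with Lemma \ref{le:greedy} --- shrinking the diagonal target so that the dominated element ends up outside $D$ --- is needed to keep the induction going; making this combinatorics precise, together with the reduction eliminating degree-$0$ elements, is where the real work lies, and everything else (the forward direction and the key lemma) is short.
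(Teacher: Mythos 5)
Your forward direction is fine (and, as you say, needs no symmetry), and your key lemma is correct: for a symmetric structure with no degree-zero elements, $(c,d)$ dominates $(a,b)$ in $\BH^2$ exactly when $c$ dominates $a$ and $d$ dominates $b$, so squares of stiff symmetric structures are stiff; this is essentially the same superposition-and-projection computation that the paper's own proof performs. The genuine gap is exactly where you flag it: the ``technical core''. The mechanism you propose --- examine the first fold of a dismantling of $\BH^2$ onto $\BH^2|_{\{(a,a)\mid a\in D\}}$, extract a dominated element $z$ of $\BH$, and ``use Lemma \ref{le:greedy} to discard diagonal elements from the target so that $z\notin D$'' --- does not work as stated. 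First, the first fold may remove an off-diagonal pair $(u,v)$ with $u,v\in D$, in which case the only dominated elements of $\BH$ you obtain lie inside $D$ and cannot be folded if $D$ is to be preserved. Second, Lemma \ref{le:greedy} only lets you fold a dominated element \emph{outside} the target while keeping dismantlability to that target; it gives no way to delete elements \emph{of} the target, and in your final application the diagonal target is stiff, so it cannot be shrunk at all (a dominated $u\in D$ is then necessarily dominated only by elements $c\notin D$, and the induction is stuck).

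The transfer statement you want is nevertheless true, and the repair is to look not at the first fold but at the first fold that removes a \emph{diagonal} element, i.e., at the last intermediate structure $\BJ_i$ containing all of $\diagonal(H^2)$: that fold removes some $(a,a)$ with $a\notin D$, dominated by some $(b,c)\neq(a,a)$, and since $\BJ_i$ is an induced substructure containing every diagonal tuple, projecting shows that $b$ (or $c$) dominates $a$ in $\BH$; thus $\BH$ always has a dominated element outside $D$, with no symmetry or degree hypotheses needed. This is precisely the argument of the paper's Lemma \ref{le:distmantledrestricted}. With that replacement your induction closes (fold $z=a$; by the product trick and iterated Lemma \ref{le:greedy} --- every removed element has a $z$-coordinate, hence lies outside $\{(a,a)\mid a\in D\}$ --- the smaller square still dismantles to the same diagonal target), and your concluding count via uniqueness of the stiff structure reached by greedy dismantling (a fact the paper states without proof) gives $|D|=|D|^2$, hence $|D|=1$. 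For comparison, the paper's proof avoids the transfer lemma entirely: it dismantles $\BH$ to a stiff $\BK$, quotes Remark \ref{pro:effective} for the fact that $\BK^2$ still dismantles to its diagonal, and then derives a contradiction from a dominated element of $\BK^2$ by the same symmetry computation as your key lemma.
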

\begin{proof}
It is immediate that if $a$ is dominated in $\BH$, then $\BH^2$ dismantles to $\BK^2$, where $\BK$ is the substructure of $\BH$ induced by $H\setminus\{a\}$, which implies the ``only if'' part of the statement.
For the converse, assume that $\BH^2$ dismantles to a subset of its diagonal and let $\BK$ be a $\emptyset$-non-foldable structure obtained from $\BH$ by a sequence of folds until no further folding is possible. Then $\BK^2$ dismantles to its diagonal as well (this fact is easy to see but to avoid repeating arguments given elsewhere we refer here to Remark \ref{pro:effective}). Assume, towards a contradiction, that the universe of $\BK$ is not a singleton. In consequence, the universe of $\BK^2$ is not a singleton either and there must exist some element $(a,b)$ dominated in $\BK^2$. Let $(a',b')$ be the element in $K^2$ dominating $(a,b)$ and assume that $a'\neq a$ (the case $b'\neq b$ is analogous). We shall prove that $a'$ dominates $a$ in $\BK$, contradicting the fact that $\BK$ is $\emptyset$-non-foldable. Indeed, let $(a_1,\dots,a_k)$ be any tuple in $R(\BK)$
and let $i\in\{1,\dots,k\}$ with $a_i=a$. Note that $b$ is not an isolated element (meaning it appears in some tuple of $R(\BK)$) since this would contradict the fact that $\BK$ is $\emptyset$-non-foldable. Note that $R(\BK)$ is symmetric as well, implying that there exists a tuple $(b_1,\dots,b_k)\in R(\BK)$ with 
$b_i=b$. Since $(a,b)$ is dominated by $(a',b')$ in $\BK^2$, it follows that $R(\BK^2)$ contains the tuple obtained after replacing $(a_i,b_i)$ by $(a',b')$ in  $((a_1,b_1),\dots,(a_k,b_k))$. This implies that the tuple obtained replacing $a_i$ by $a'$ in $(a_1,\dots,a_k)$ belongs to $R(\BK)$.
\end{proof}

\begin{question}
{\normalfont
While it is always true that if $\mathbb{H}$ dismantles to a single element, then $\mathbb{H}^2$ dismantles to a substructure of the diagonal
it is important to notice that the equivalence between ``dismantlable'' and ``the square dismantles to a substructure of its diagonal'' is not true for general relational structures. For example, given $\tau = \{R\}$ for $R$ a binary relation symbol, we can take $\BH$ such that $H = \{0,1\}$ and $R(\BH) = \{(0,1)\}$. Then, $\BH$ is not dismantlable, but $\BH^2$ dismantles to its diagonal.structures. It is an interesting open question to determine for which relational structures, besides those consisting of a unique symmetric relation, both notions coincide.}
\end{question}

In Section \ref{section4} we shall prove Theorem \ref{the:mainsimple}. Indeed, we shall prove a refinement of it (Theorem~\ref{the:main}). This refined theorem, which we believe is interesting on its own, is motivated by the fact that, sometimes, it is natural---particularly in the context of statistical physics---to work by forcing each particle of some subset to each take a particular spin and work with the remaining ones. For example, this is a common scenario when the particles in the boundary of a given set in a lattice are fixed to take particular spins and we want to study the distribution of spins in the interior of the set, conditioned on such boundary configuration. These ideas inspired the refined version, which can be regarded as the study of \emph{boundary long range actions}, i.e., long range action phenomena where some boundary configuration is fixed, very similar to the concept of \emph{boundary phase transition} in relation to phase transitions \cite{1-martinelli}. In order to state this stronger version, we need the following definitions.

\subsection{Some refined definitions}

Let $\BH$ and $\BG$ be $\tau$-structures where $\BG$ is possibly infinite. Let $\phi_1,\dots,\phi_\seq$ be a sequence of homomorphisms in $\Hom(\BG,\BH)$ and let $J \subseteq H$. We say that $\phi_1,\dots,\phi_\seq$ is {\bf $J$-preserving} if for every $x\in G$ such that $\phi_1(x)=\phi_\seq(x)=a\in J$,  we have that $\phi_i(x)=a$ for every $1\leq i\leq \seq$.

A {\bf $J$-walk} is a $J$-preserving walk. Furthermore, we say that a graph of homomorphisms is {\bf $J$-connected} if for every pair of maps $\phi,\psi \in \Hom(\BG,\BH)$ agreeing on all but finitely many elements there exists a $J$-walk that joins them. Notice that if $J = \emptyset$, the definition of $J$-connectedness coincides with the definition of connectedness introduced in Section \ref{subsection:graphHom}.

Given $J \subseteq H$, we say that $\Hom(\BG,\BH)$ is {\bf $(V,W)$-mixing with respect to $J$} if $\Hom(\BG,\BH)$ is $(V,W)$-mixing and the map $\gamma$ can be chosen so that $\gamma(x) = \phi(x)=\psi(x)$ for all $x \in (\phi,\psi)^{-1}(\Delta(J^2))$, where $(\phi,\psi)^{-1}$ denotes the inverse of the map $(\phi,\psi): \BG \to \BH^2$ given by $x \mapsto (\phi(x),\psi(x))$; i.e., the map $\gamma$ coincides with $\phi$ in $V$, with $\psi$ in $W$, and with both of them for $x$ such that $\phi(x)=\psi(x) \in J$. We say that $\Hom(\BG,\BH)$ is {\bf strongly $J$-irreducible} with gap $\gap$ if it is $(V,W)$-mixing with respect to $J$ for all $V,W$ such that $\dist(V,W) \geq \gap$.

It is easy to check that $\Hom(\BG,\BH)$ is strongly $H$-irreducible with gap $\gap$ if and only if $\Hom(\BG,\BH)$ is TSSM with gap $\gap$. Indeed, the equivalence comes from the fact that the set $S$ in the definition of TSSM is always a subset of $(\phi,\psi)^{-1}(\Delta(H^2))$ and the strongly $H$-irreducible property is equivalent to the TSSM case where $S = (\phi,\psi)^{-1}(\Delta(H^2))$.

\subsection{Second theorem: A refinement}
   
\begin{theorem}\label{the:main}
Let $\BH$ be a finite $\tau$-structure with universe $H$ and let $J\subseteq H$. Then the following are equivalent:
\begin{enumerate}
\item[(\mylabel{itemdismantlesrestricted}{A1})] $\BH$ dismantles to a substructure $\BI$ whose universe contains $J$ and such that $\BI^2$ dismantles to its diagonal;
\item[(\mylabel{itemdismantles}{A2})] {$\BH^2$ dismantles to a substructure $\BK$ where its universe $K$ satisfies $\diagonal(J^2)\subseteq K\subseteq\diagonal(H^2)$;}
\item[(\mylabel{itemC1}{B1})] $C(\BG,\BH)$ is $J$-connected for every locally finite $\tau$-structure $\BG$;
\item[(\mylabel{itemCexists}{B2})] there exists some $\diff \geq 1$ such that $C_\diff (\BG,\BH)$ is $J$-connected for every finite $\tau$-structure $\BG$;    
\item[(\mylabel{itemCparticular}{B3})] $C(\BL \times \BH^2,\BH)$ is $J$-connected;
\item[(\mylabel{itemLforall}{B4})] $L(\BG,\BH)$ is $J$-connected for every finite $\tau$-structure $\BG$;
\item[(\mylabel{itemLparticular}{B5})] the projections $\pi_1$ and $\pi_2$ are $J$-connected in $L(\BH^2,\BH)$;
\item[(\mylabel{itemgraph}{C1})] there exists $\gap \geq 0$ such that $\Hom(\BG,\BH)$ is strongly $J$-irreducible with gap $\gap$ for every $\tau$-structure $\BG$; and
\item[(\mylabel{itemtree}{C2})] there exists $\gap \geq 0$ such that $\Hom(\BT_{\BH^2},\BH)$ is $(\{x\},W)$-mixing with respect to $J$ for all $x \in T_{\BH^2}$ and $W \subseteq T_\BH^2$  with $\dist(x,W) \geq \gap$.
\end{enumerate}
\end{theorem}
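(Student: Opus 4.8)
The plan is to prove Theorem~\ref{the:main} as a single cycle of implications running through all the conditions, anchored at the structural conditions (A1) and (A2) and using the forest of walks $\BT_{\BH^2}$, together with its label map $\rho_{\BH^2}$, as a universal witness that ties the mixing statements back to dismantlability.

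I would first dispose of the purely combinatorial equivalence (A1)$\Leftrightarrow$(A2). The forward direction is immediate: if $\BH$ dismantles to a substructure $\BI$ with $J\subseteq I$, then folding each coordinate in turn shows $\BH^2$ dismantles to $\BI^2$, and appending the dismantling of $\BI^2$ to $\diagonal(\BI^2)$ yields a substructure $\BK=\diagonal(\BI^2)$ with $\diagonal(J^2)\subseteq K\subseteq\diagonal(H^2)$. For the converse I would fold $\BH$ greedily, removing only elements outside $J$, until reaching a $J$-non-foldable $\BI\supseteq J$; then, invoking Lemma~\ref{le:greedy} and the confluence of greedy dismantling to reorganize the sequence witnessing (A2), I would argue that $\BH^2$ dismantles first down to $\BI^2$ and then on to its diagonal, so that $\BI^2$ itself dismantles to $\diagonal(\BI^2)$. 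This is the same mechanism as in Lemma~\ref{le:symmetricstructures}; the only delicate point is the bookkeeping of which folds are available at which stage.

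Next comes the ``positive'' half of the cycle: the passage from (A1)/(A2) to all the connectedness and mixing statements, carried out by fixing a dismantling of $\BH$ to $\BI$ and of $\BI^2$ to $\diagonal(\BI^2)$ and using the associated fold retractions. The key construction is (A1)$\Rightarrow$(C1): given $\phi,\psi\in\Hom(\BG,\BH)$ and $V,W$ with $\dist(V,W)$ large, define $\gamma(x)$ to be a projection of the image of $(\phi(x),\psi(x))$ under an initial segment of the composite dismantling $\BH^2\to\diagonal(\BI^2)$, taking $\pi_1$ near $V$ and $\pi_2$ near $W$ and letting the length of the segment grow with the distance to $V\cup W$ so that the two recipes agree on the diagonal reached in the middle; since the folds of $\BH$ fix $I\supseteq J$, this $\gamma$ coincides with $\phi$ and $\psi$ on $(\phi,\psi)^{-1}(\diagonal(J^2))$, and the gap needed is bounded by the sum of the lengths of the two dismantling sequences. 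The implication (C1)$\Rightarrow$(C2) is then just (C1) for $\BG=\BT_{\BH^2}$ with $V=\{x\}$, while the $C$- and $L$-connectedness statements (B1)--(B5), their mutual relations, and their specializations to $\BL\times\BH^2$ and $\BH^2$ (using that $\pi_1$ and $\pi_2$ agree exactly on the diagonal) are obtained by finite-support reconfiguration arguments together with the translation Lemmas~\ref{le:links} and~\ref{le:conn}, keeping walks $J$-preserving by only ever moving values inside $I$.

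The remaining task --- and the step I expect to be the real obstacle --- is to close the cycle by showing that one of the weakest conditions, say (C2), forces (A2). I would argue contrapositively. If (A2) fails, fold $\BH^2$ greedily, removing only elements outside $\diagonal(J^2)$, until reaching a $\diagonal(J^2)$-non-foldable substructure $\BK$ with $\diagonal(J^2)\subseteq K$ but some off-diagonal $(a,b)\in K$, $a\neq b$; let $q\colon\BH^2\to\BK$ be the resulting retraction. Inside $\BT_{\BH^2}$, which is locally finite (so balls around any point are finite and ``far-away'' sets can be taken cofinite), consider $\phi=\pi_1\circ\rho_{\BH^2}$ and $\psi=\pi_2\circ\rho_{\BH^2}$: since $(\phi,\psi)=\rho_{\BH^2}$, these homomorphisms of $\BT_{\BH^2}$ into $\BH$ agree precisely on $\rho_{\BH^2}^{-1}(\diagonal(H^2))$ and in particular form a $J$-compatible pair. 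Taking $x\in T_\BK\subseteq T_{\BH^2}$ to be a walk ending at $(a,b)$ makes $\phi(x)=a\neq b=\psi(x)$, so a map $\gamma$ witnessing $(\{x\},W)$-mixing with respect to $J$ for a cofinite $W$ far from $x$ would agree with $\psi$ off a finite set, with $\phi$ at $x$, and with both on $\rho_{\BH^2}^{-1}(\diagonal(J^2))$; I would then restrict an appropriate pairing of $\gamma$ with one of the projected label maps to $T_\BK$, compose with $q$, and apply Lemma~\ref{le:universaltree} --- with $\BK$ and $\diagonal(J^2)$ in place of $\BH$ and $J$ --- to pin this composite down to $\rho_\BK$ on all of $T_\BK$, including at $x$, contradicting the prescribed value of $\gamma$ there. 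The hard part is precisely this transfer: the rigidity supplied by Lemma~\ref{le:universaltree} lives on the non-foldable core $\BK\subseteq\BH^2$, whereas the mixing hypothesis only controls homomorphisms into the ambient $\BH$, only at a single point, and only cofinitely far away, so the heart of the argument is checking that these meagre data still suffice to trigger the rigidity --- which is exactly where the proof departs from, and becomes substantially more involved than, the graph-theoretic picture of Brightwell and Winkler.
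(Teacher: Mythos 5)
Your positive half (the equivalence (\ref{itemdismantlesrestricted})$\Leftrightarrow$(\ref{itemdismantles}), the distance-graded retraction construction for (\ref{itemdismantlesrestricted})$\Rightarrow$(\ref{itemgraph}), and the derivation of the connectedness statements from (\ref{itemdismantlesrestricted})) follows essentially the same route as the paper, but your overall architecture leaves a genuine hole: your only ``return edge'' into the structural conditions is (\ref{itemtree})$\Rightarrow$(\ref{itemdismantles}). The conditions (\ref{itemCexists})--(\ref{itemLparticular}) are, in your plan, only \emph{derived} from (\ref{itemdismantlesrestricted}) and related to one another via Lemmas~\ref{le:links} and~\ref{le:conn}; you never show that any connectedness condition implies dismantlability or mixing. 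Consequently the nine statements are not proved equivalent --- (\ref{itemCexists}), (\ref{itemCparticular}), (\ref{itemLforall}), (\ref{itemLparticular}) are only shown necessary. This missing edge is a substantive step in the paper: Lemma~\ref{le:path-dism} proves (\ref{itemLparticular})$\Rightarrow$(\ref{itemdismantlesrestricted}) by taking a $J$-walk $\pi_1=h_1,\dots,h_t=\pi_2$ in $L(\BH^2,\BH)$, composing with the retraction $r$ of $\BH^2$ onto the symmetric $\diagonal(J^2)$-non-foldable structure $\BK$ of Lemma~\ref{le:symmetric} to get endomorphisms $\phi_i=r(h_i,\pi_2)$ (and their mirror images), and showing that if $\BK$ contains an off-diagonal element then the first index at which $\phi_i$ moves yields a dominated element of $\BK$, contradicting $\diagonal(J^2)$-non-foldability (with $J$-preservation of the walk disposing of the residual diagonal case). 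Nothing in your ``finite-support reconfiguration'' paragraph plays this role, and it is not a routine translation.

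The closing step (\ref{itemtree})$\Rightarrow$(\ref{itemdismantles}) is the right contrapositive idea (it matches Lemma~\ref{le:free-tree}), but precisely the part you flag as ``the heart of the argument'' is where your setup breaks. First, you build $\BK$ by greedily folding $\BH^2$ outside $\diagonal(J^2)$; such a $\BK$ is $\diagonal(J^2)$-non-foldable but need not be \emph{symmetric}, and symmetry is what produces the contradiction at the basepoint: the paper concludes $r(a_2,a_1)=(a_2,a_1)\neq(a_1,a_2)$ because $(a_2,a_1)\in K$. The paper manufactures this via the two-stage construction of Lemma~\ref{le:symmetric} (dismantle $\BH$ to a $J$-non-foldable $\BI$, then fold only symmetric pairs of non-diagonal elements of $\BI^2$), and proving that the resulting symmetric $\BK$ is still $\diagonal(J^2)$-non-foldable is itself a real argument. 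Second, pairing a \emph{single} mixing witness $\gamma$ with a projected label map does not give a map agreeing with $\rho_{\BK}$ on the cofinite far set: pairing $\gamma$ with $\pi_2\circ\rho_{\BH^2}$ is diagonal on $W$, while pairing with $\pi_1\circ\rho_{\BH^2}$ yields the swapped label $(\psi(w),\phi(w))$, which is not $\rho_{\BK}(w)$ and only lands in $K$ at all when $\BK$ is symmetric; so Lemma~\ref{le:universaltree} cannot be invoked as you describe. The paper instead applies the mixing hypothesis \emph{twice}, obtaining $h_1$ (agreeing with $\psi$ at $w_0$, with $\phi$ on $W_g$) and $h_2$ (agreeing with $\phi$ at $w_0$, with $\psi$ on $W_g$), so that $r\circ(h_1,h_2)$ agrees with $\rho_{\BK}$ on $W_g$ and on $\rho_{\BK}^{-1}(\diagonal(J^2))$, is forced by Lemma~\ref{le:universaltree} to equal $\rho_{\BK}$, and yet takes the value $r(a_2,a_1)$ at $w_0$. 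Without the symmetric $\BK$ and this two-witness pairing, the rigidity cannot be triggered from the data you have.
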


\begin{remark}
{\normalfont
To our knowledge, if $J \neq \emptyset$, Theorem \ref{the:main} has not been known before even in the graph case. In addition, notice that from Theorem \ref{the:main} it follows, by taking $J = H$, that $\BH^2$ dismantles to its full diagonal if and only if there exists $\gap \geq 0$ such that $\Hom(\BG,\BH)$ is TSSM with gap $\gap$ for every $\tau$-structure $\BG$.}
\end{remark}

As a byproduct of our results (in particular of Lemma \ref{le:path-dism} in Section \ref{section4}) we derive the following effective procedure to decide whether the statements of Theorem \ref{the:main} are satisfied.

\begin{remark}
\label{pro:effective}
{\normalfont
Let $\BH$ be a finite $\tau$-structure and let $J\subseteq H$. If there is a dismantling sequence (for $\BH$ and $J$) as in Theorem \ref{the:main}(\ref{itemdismantlesrestricted}), then there is one that can be obtained in the following greedy manner:
\begin{itemize}
\item First step. Starting from $\BH$, iteratively fold elements in $H \setminus J$ in any arbitrary way until no further dismantling is possible, obtaining a $J$-non-foldable $\tau$-structure $\BI$.
\item Second step. Starting from $\BI^2$, iteratively fold symmetric pairs $(a,b)$ and $(b,a)$ in an arbitrary way until no further dismantling is possible. 
\end{itemize}
}
\end{remark}

\section{Proofs}
\label{section4}
 
The following implications are immediate:
\begin{gather*}
(\ref{itemC1}) \Rightarrow (\ref{itemCexists}), (\ref{itemC1}) \Rightarrow (\ref{itemCparticular}), (\ref{itemLforall})\Rightarrow (\ref{itemLparticular}), \text{ and }(\ref{itemgraph})\Rightarrow (\ref{itemtree}).
\end{gather*}

The rest of the section contains several lemmas from which the remaining implications follow according the following table:
$$
\begin{array}{|c|c||c|c|}
\hline
 \text{Lemma \ref{le:conn}} 				&	(\ref{itemCexists}) \Rightarrow (\ref{itemLforall})					&	\text{Lemma \ref{le:retraction}}				&	(\ref{itemdismantlesrestricted}) \Rightarrow (\ref{itemdismantles})	\\
 \cline{3-4}
 									&	(\ref{itemCparticular}) \Rightarrow (\ref{itemLparticular})				&	\text{Lemma \ref{le:connectedness}} 			&	(\ref{itemdismantlesrestricted}) \Rightarrow (\ref{itemC1})			\\
\hline
\text{Lemma \ref{le:path-dism}} 				& 	(\ref{itemLparticular}) \Rightarrow (\ref{itemdismantlesrestricted})	&	\text{Lemma \ref{le:interpolate}} 			&	(\ref{itemdismantlesrestricted}) \Rightarrow (\ref{itemgraph})		\\
\hline
\text{Lemma \ref{le:distmantledrestricted}}		&	(\ref{itemdismantles}) \Rightarrow (\ref{itemdismantlesrestricted})	&	\text{Lemma \ref{le:free-tree}} 				&	(\ref{itemtree})\Rightarrow(\ref{itemdismantlesrestricted})			\\
\hline
\end{array}
$$

\begin{lemma}
\label{le:symmetric}
Let $\BI$ be a $J$-non-foldable $\tau$-structure and $\BK$ be a symmetric $\tau$-structure obtained by folding only non-diagonal elements of $\BI^2$ (note that $J^2 \subseteq K$). Assume that $\BK$ is minimal, i.e., $\BK$ has no proper substructure with the same property. Then, $\BK$ is $\diagonal(J^2)$-non-foldable.
\end{lemma}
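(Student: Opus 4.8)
\emph{Setup and strategy.} The plan is to prove directly that every element of $K$ dominated in $\BK$ lies in $\diagonal(J^2)$. I will rely on three elementary observations about the construction. First, since $\BK$ arises from $\BI^2$ by a sequence of folds, it is an \emph{induced} substructure of $\BI^2$, so $R(\BK)=R(\BI^2)\cap K^{\arity}$ for every $\arity$-ary $R\in\tau$. Second, since only non-diagonal elements were ever folded, $K$ contains every diagonal element of $I^2$. Third, writing $\sigma$ for the coordinate-swap involution $(c,d)\mapsto(d,c)$ (acting entrywise on tuples), $R(\BI^2)$ is $\sigma$-invariant, and since $\BK$ is symmetric (so $\sigma(K)=K$) the relations of $\BK$ are $\sigma$-invariant as well; in particular, if $x$ is dominated by $y$ in $\BK$ then $\sigma(x)$ is dominated by $\sigma(y)$ in $\BK$. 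Now fix $x=(a,b)\in K$ dominated in $\BK$ by some $y=(a',b')\neq x$, and write $\mathbf{t}[i\mapsto z]$ for the tuple obtained from a tuple $\mathbf{t}$ by replacing its $i$th entry with $z$. I will show first that $a=b$ and then that $a\in J$.

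\emph{Step 1: $x$ is diagonal.} Suppose $a\neq b$. The plan is to contradict the minimality of $\BK$ by folding both $x=(a,b)$ and $\sigma(x)=(b,a)$ out of $\BK$, producing a proper symmetric substructure still obtained from $\BI^2$ by folding only non-diagonal elements. The one delicate point is that $y$ might equal $\sigma(x)$, in which case folding $x\mapsto y$ destroys symmetry; to get around this I will first show that whenever $(b,a)$ dominates $(a,b)$ in $\BK$, the diagonal element $(b,b)$ does so too. Concretely, for $\mathbf{t}\in R(\BK)$ with $t_i=(a,b)$, domination by $(b,a)$ together with $R(\BK)=R(\BI^2)\cap K^{\arity}$ forces $\pi_1(\mathbf{t}[i\mapsto(b,b)])=\pi_1(\mathbf{t})[i\mapsto b]\in R(\BI)$, whereas $\pi_2(\mathbf{t}[i\mapsto(b,b)])=\pi_2(\mathbf{t})\in R(\BI)$ holds trivially; hence $\mathbf{t}[i\mapsto(b,b)]\in R(\BI^2)\cap K^{\arity}=R(\BK)$. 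Since $(b,b)\notin\{x,\sigma(x)\}$ when $a\neq b$, I may therefore assume $y\notin\{x,\sigma(x)\}$ from the outset. Then: fold $x\mapsto y$ to obtain $\BK_1$, the substructure of $\BK$ induced by $K\setminus\{x\}$; check (using $y\neq\sigma(x)$, hence $\sigma(y)\neq x$) that $\sigma(x)$ is still dominated in $\BK_1$ by $\sigma(y)$; fold $\sigma(x)\mapsto\sigma(y)$ to obtain $\BK_2$, the substructure of $\BK$ induced by $K\setminus\{x,\sigma(x)\}$. Since $a\neq b$, the set $\{x,\sigma(x)\}$ consists of two distinct non-diagonal elements, so $\BK_2$ is a proper symmetric substructure of $\BK$ obtained from $\BI^2$ by folding only non-diagonal elements, contradicting minimality.

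\emph{Step 2: $a\in J$.} Now $x=(a,a)$ is dominated in $\BK$ by some $y=(a',b')\neq(a,a)$; by $\sigma$-invariance $\sigma(y)=(b',a')$ also dominates $x$, so after possibly replacing $y$ by $\sigma(y)$ I may assume $a'\neq a$. The key step is to show that $a'$ dominates $a$ in $\BI$. Given $R\in\tau$, $\mathbf{u}\in R(\BI)$ and $i$ with $u_i=a$, the ``diagonal lift'' $\mathbf{t}:=((u_1,u_1),\dots,(u_{\arity},u_{\arity}))$ lies in $R(\BI^2)$ and, all of its entries being diagonal and hence in $K$, also in $R(\BK)$, with $t_i=x$; applying the domination of $x$ by $y$ and then the projection $\pi_1$ gives $\mathbf{u}[i\mapsto a']=\pi_1(\mathbf{t}[i\mapsto y])\in R(\BI)$. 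Thus $a'$ dominates $a$ in $\BI$, and since $a'\neq a$ the element $a$ is dominated in $\BI$; as $\BI$ is $J$-non-foldable, $a\in J$, so $x=(a,a)\in\diagonal(J^2)$, as required.

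\emph{Main obstacle.} The only genuinely subtle point is the swap-domination case in Step 1: the bare fact ``$(b,a)$ dominates $(a,b)$'' does not by itself permit folding in a way that keeps the intermediate structures headed toward a \emph{symmetric} one, and the observation that such a swap-domination automatically upgrades to a domination by the diagonal element $(b,b)$ is precisely what repairs the argument. The remaining verifications---that the two folds satisfy the fold axioms, that $\BK_2$ is symmetric, and that the diagonal lift lands in $R(\BK)$---are routine consequences of $R(\BK)=R(\BI^2)\cap K^{\arity}$ and the definitions.
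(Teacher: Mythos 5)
Your proof is correct and takes essentially the same route as the paper's: the same upgrade of a swap-domination of $(a,b)$ by $(b,a)$ to a domination by the diagonal element $(b,b)$, the same double fold of $(a,b)$ and $(b,a)$ contradicting minimality of $\BK$, and the same diagonal-lift argument pulling a domination of $(a,a)$ in $\BK$ back to a domination of $a$ in $\BI$, whence $a\in J$. The differences are purely presentational (you make explicit the facts that $R(\BK)=R(\BI^2)\cap K^{\arity}$ and that domination is preserved under the coordinate swap, which the paper leaves implicit in its ``w.l.o.g.'' steps).
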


\begin{proof}
Let $(a,b)$ be any dominated element in $\BK$. We first shall prove that $(a,b)$ is diagonal by contradiction. Let $(c,d) \neq (a,b)$ be an element dominating $(a,b)$ in $\BK$. Since $\BK$ is symmetric, it follows that both $(b,a)$ and $(d,c)$ are present in $K$ as well. If $(c,d)\neq (b,a)$, then we can fold both $(a,b)$ and $(b,a)$ in $\BK$, contradicting the minimality of $\BK$. Otherwise, it follows (as we shall prove straight away) that $(b,b)$ dominates both $(a,b)$ and $(b,a)$, contradicting again the minimality of $\BK$. 

We only need to prove that $(b,b)$ dominates $(a,b)$, as the other case is analogous. Let $R \in \tau$, let $((a_1,b_1),\dots,(a_k,b_k))$ be any tuple in $R(\BK)$, and let $j$ such that  $(a_j,b_j)=(a,b)$. Since $(b,a)$ dominates $(a,b)$,
$$
((a_1,b_1),\dots,(a_{j-1},b_{j-1}),(b,a),(a_{j+1},b_{j+1}),\dots,(a_k,b_k))\in  R(\BK).
$$

It follows that $(a_1,\dots,a_{j-1},b,a_{j+1},\dots,a_k)\in R(\BI)$. Since $(b_1,\dots,b_k)$ is also a tuple in $ R(\BI)$ and $b_j=b$, it follows that
$$
((a_1,b_1),\dots,(a_{j-1},b_{j-1}),(b,b),(a_{j+1},b_{j+1}),\dots,(a_k,b_k))\in R(\BK).
$$

To complete the proof, we shall show that $a$ is dominated in $\BI$ which, by the assumptions on $\BI$, implies that $a\in J$.
Let $(c,d)$ be any element dominating $(a,a)$ in $\BK$. Cleary, $c$ or $d$ is different from $a$, so assume, w.l.o.g., that $a\neq c$. We shall show that $c$ dominates $a$ in $\BI$. Indeed, let $ R\in\tau$, let $(a_1,\dots,a_k)$ be any tuple in $ R(\BI)$, and let $j$ such that $a_j=a$. Then,
$((a_1,a_1),\dots,(a_k,a_k))\in  R(\BK)$ and thus, $((a_1,a_1),\dots,(a_{j-1},a_{j-1}),(c,d),(a_{j+1},a_{j+1}),\dots,(a_k,a_k))\in  R(\BK)$ implies
that $(a_1,\dots,a_{j-1},c,a_{j+1}\dots,a_k)\in  R(\BI)$, concluding the proof.
\end{proof}

\begin{lemma}[{[(\ref{itemCexists}) $\Rightarrow$ (\ref{itemLforall}), (\ref{itemCparticular}) $\Rightarrow$ (\ref{itemLparticular})]}]
\label{le:conn} 
Given a finite $\tau$-structure $\BG$, if $C_\diff(\BL_\diff \times \BG,\BH)$ is $J$-connected for some $n \geq 1$, then $L(\BG,\BH)$ is $J$-connected.
\end{lemma}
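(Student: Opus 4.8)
The goal is to prove that $J$-connectedness of $C_\diff(\BL_\diff\times\BG,\BH)$ implies $J$-connectedness of $L(\BG,\BH)$. Recall (Lemma \ref{le:links}) that a homomorphism in $\Hom(\BL_\diff\times\BG,\BH)$ is exactly a walk $\phi(0),\dots,\phi(\diff)$ in $L(\BG,\BH)$, and (Section \ref{subsection:graphHom}) that an edge of $L(\BG,\BH)$ between $\phi$ and $\psi$ is precisely a homomorphism from $\BL\times\BG$ to $\BH$ with endpoints $\phi,\psi$. So I would set up the correspondence: given two homomorphisms $\phi,\psi\in\Hom(\BG,\BH)$ that agree off a finite set and that we want to join by a $J$-walk in $L(\BG,\BH)$, I first need to produce two vertices of $C_\diff(\BL_\diff\times\BG,\BH)$ that (i) agree off a finite set, so that the hypothesis applies, and (ii) whose $J$-path, translated back through Lemma \ref{le:links}, yields a $J$-walk from $\phi$ to $\psi$ in $L(\BG,\BH)$.

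The natural candidates are the two ``constant walks'' $\Phi,\Psi\in\Hom(\BL_\diff\times\BG,\BH)$ defined by $\Phi(i,x)=\phi(x)$ and $\Psi(i,x)=\psi(x)$ for all $i\in\{0,\dots,\diff\}$; each is a homomorphism since $\phi,\psi$ are and since the relations of $\BL_\diff$ only force consecutive indices to be jointly in a link. They agree off the finite set $\{0,\dots,\diff\}\times F$ where $F$ is the finite set on which $\phi,\psi$ differ. Now apply the hypothesis to get a $J$-walk $\Phi=\Gamma_0,\Gamma_1,\dots,\Gamma_m=\Psi$ in $C_\diff(\BL_\diff\times\BG,\BH)$. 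The key observation is that consecutive $\Gamma_t,\Gamma_{t+1}$ differ in at most $\diff$ values of $\{0,\dots,\diff\}\times G$, so by pigeonhole there is an index $i_t\in\{0,\dots,\diff-1\}$ such that $\Gamma_t$ and $\Gamma_{t+1}$ agree on $\{i_t,i_t+1\}\times G$ (the $\diff+1$ ``layers'' $\{i\}\times G$ cannot all be touched by only $\diff$ changes — more precisely, among the $\diff$ pairs of consecutive indices, at least one pair $\{i,i+1\}$ has neither layer changed). Restricting $\Gamma_t$ to the sub-$\BL$ sitting on indices $\{i_t,i_t+1\}$ (which is a copy of $\BL$ inside $\BL_\diff$) gives, via the link characterization, an edge of $L(\BG,\BH)$ between $\Gamma_t(i_t)$ and $\Gamma_t(i_t+1)=\Gamma_{t+1}(i_t+1)$; and the full path $\Gamma_t(0),\dots,\Gamma_t(\diff)$ is a walk in $L(\BG,\BH)$. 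Splicing these walks together — using that $\Gamma_t(i_t+1)=\Gamma_{t+1}(i_t+1)$ to glue the tail of the walk from $\Gamma_t$ at layer $i_t+1$ onto the head of the walk from $\Gamma_{t+1}$, and using $\Gamma_0=\Phi$, $\Gamma_m=\Psi$ to see the two endpoints are the constant-$\phi$ and constant-$\psi$ layers — yields a walk in $L(\BG,\BH)$ from $\phi$ to $\psi$.

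The remaining point is that the resulting walk is $J$-preserving. Suppose $x\in G$ has $\phi(x)=\psi(x)=a\in J$; I must check that every vertex appearing in the spliced $L(\BG,\BH)$-walk sends $x$ to $a$. Each such vertex is some $\Gamma_t(i)$ for appropriate $t,i$. Since $\Phi(i,x)=\Psi(i,x)=a\in J$ for all $i$ and the $C_\diff$-walk $\Gamma_0,\dots,\Gamma_m$ is $J$-preserving as a walk in $\Hom(\BL_\diff\times\BG,\BH)$, we get $\Gamma_t(i,x)=a$ for all $t$ and all $i$; in particular $\Gamma_t(i)(x)=a$, as desired. Hence the spliced walk is a $J$-walk in $L(\BG,\BH)$ joining $\phi$ and $\psi$, which proves $J$-connectedness of $L(\BG,\BH)$.

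The main obstacle is purely bookkeeping: one must carefully identify the copy of $\BL$ inside $\BL_\diff$ on indices $\{i_t,i_t+1\}$, check that $\Gamma_t$ restricted there is genuinely a $\BL\times\BG\to\BH$ homomorphism (immediate from the definition of $R(\BL_\diff)$, which contains $\{i,i+1\}^k$), and then splice the per-step walks in the correct order while tracking that the gluing points match. None of the steps is deep, but the indexing in the pigeonhole argument (that $\diff$ changed coordinates over $\diff+1$ layers leave a consecutive untouched pair) and the direction of splicing are where a careless write-up would go wrong; I would state the pigeonhole step as a small explicit sub-claim before doing the splice.
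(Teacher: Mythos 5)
Your proposal follows essentially the same route as the paper's proof: take the two constant homomorphisms in $\Hom(\BL_\diff\times\BG,\BH)$, apply the hypothesis to get a $J$-walk between them in $C_\diff(\BL_\diff\times\BG,\BH)$, use a pigeonhole argument on consecutive steps to find a shared layer, splice the per-homomorphism walks via Lemma \ref{le:links}, and verify $J$-preservation exactly as the paper does (the constant maps agree with value in $J$ at every layer above such an $x$, so $J$-preservation of the $C_\diff$-walk forces every $\Gamma_t(i)(x)=a$).

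One sub-claim, however, is false as stated: you assert that since $\Gamma_t$ and $\Gamma_{t+1}$ differ in at most $\diff$ coordinates, some \emph{consecutive pair} of layers $\{i_t,i_t+1\}$ is entirely untouched. With $\diff+1$ layers and up to $\diff$ changed coordinates you are only guaranteed one untouched layer, not two adjacent ones; for instance with $\diff=2$, changes in layers $0$ and $2$ leave only layer $1$ untouched, and neither pair $\{0,1\}$ nor $\{1,2\}$ is unchanged. Fortunately your argument never actually uses both layers: the edge between $\Gamma_t(i)$ and $\Gamma_t(i+1)$ exists for \emph{every} $i$ simply because $\Gamma_t(0),\dots,\Gamma_t(\diff)$ is a walk in $L(\BG,\BH)$ by Lemma \ref{le:links}; the agreement is only needed at the single gluing layer, i.e.\ $\Gamma_t(i^*)=\Gamma_{t+1}(i^*)$ for some $i^*\in\{0,\dots,\diff\}$ (possibly $i^*=0$, in which case nothing needs to be spliced). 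Replacing your consecutive-pair claim by this correct single-layer pigeonhole --- which is precisely what the paper uses --- the splice (walk along $\Gamma_t$ from layer $0$ to layer $i^*$, then along $\Gamma_{t+1}$ from layer $i^*$ back to layer $0$) and the $J$-preservation check go through unchanged.
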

 
\begin{proof}  
Let $\phi, \psi \in \Hom(\BG,\BH)$. We shall show that there is a $J$-walk in $L(\BG,\BH)$ from $\phi$ to $\psi$. By Lemma \ref{le:links}, the maps $h,h':\{0,\dots,\diff\}\times G\rightarrow H$, where $h(i)=\phi$ and $h'(i)=\psi$ for every $i\in\{0,\dots,\diff\}$, belong trivially to $\Hom(\BL_\diff \times \BG,\BH)$. Since we are assuming that $C_\diff(\BL_\diff \times \BG,\BH)$ is $J$-connected, it follows that there exists a $J$-walk $h=h^0,\dots,h^s=h'$ in $C_\diff(\BL_\diff \times \BG,\BH)$ joining them, for some $s \geq 0$. Proceeding by induction, we will construct a walk in $L(\BG,\BH)$ connecting $h^0(0)$ with $h^j(0)$ for every $j\in\{0,\dots,s\}$. The base case, $j=0$, is trivial. Now assume that the statement holds for some $j<s$. Since $h^j$ and $h^{j+1}$ differ in at most $\diff$ values, this implies that $(h^j(0),\dots,h^j(\diff))$ and $(h^{j+1}(0),\dots,h^{j+1}(\diff))$ must have an entry in common. Hence, let $i^*\in\{0,\dots,\diff\}$ be such that $h^j(i^*)=h^{j+1}(i^*)$. Since, by Lemma \ref{le:links}, there are walks from $h^j(0)$ to $h^j(i^*)$ and from $h^{j+1}(i^*)$ to $h^{j+1}(0)$, we are done.

Hence, we have shown that there is a walk in $L(\BG,\BH)$ connecting $h^0(0)=\phi$ and $h^s(0)=\psi$. It remains to show that the walk we have just constructed is $J$-preserving. Let $h^j(i)$, for $j \in\{0,\dots,s\}$ and $i\in\{0,\dots,\diff\}$, be any element in the walk, and let  $x\in G$ such that $\phi(x)=\psi(x)=a\in J$. Since $h^0(i)(x)=\phi(x)=a$, $h^s(i)(x)=\psi(x)=a$, and the walk $h^0,\dots,h^s$ is $J$-preserving, it follows that $h^j(i)=a$ as well.
\end{proof}

\begin{lemma}[{[(\ref{itemLparticular}) $\Rightarrow$ (\ref{itemdismantlesrestricted})]}]
\label{le:path-dism}
Suppose that the projections $\pi_1$ and $\pi_2$ are $J$-connected in $L(\BH^2,\BH)$. Let $\BI$ be any $J$-non-foldable structure obtained by dismantling $\BH$ and let $\BK$ be the symmetric $\diagonal(J^2)$-non-foldable structure given by Lemma~\ref{le:symmetric}. Then $\BK$ contains only diagonal elements.
\end{lemma}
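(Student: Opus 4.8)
The plan is to work directly from the hypothesis: it provides a $J$-walk $\pi_1=\gamma_0,\gamma_1,\dots,\gamma_m=\pi_2$ in $L(\BH^2,\BH)$ (after reversing, if needed, we may list $\pi_1$ first). I will transport this walk, coordinate by coordinate, into a walk $g_0,\dots,g_m$ in $L(\BK,\BK)$ enjoying three properties: (i) $g_m=\mathrm{id}_\BK$; (ii) each $g_i$ fixes every element of $\diagonal(J^2)\cap K$; and (iii) the image of $g_0$ lies in the diagonal of $\BK$. A short rigidity argument then forces $g_0=\mathrm{id}_\BK$, so $\mathrm{id}_\BK$ is diagonal-valued, that is $K\subseteq\diagonal(I^2)$, which is precisely the assertion that $\BK$ contains only diagonal elements. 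The ingredients are: the retraction $r\colon\BH\to\BI$ obtained by composing the folds that dismantle $\BH$ onto $\BI$; the retraction $s\colon\BI^2\to\BK$ arising from the dismantling sequence that produces $\BK$ from $\BI^2$ (Lemma~\ref{le:symmetric}); the fact that $\BK$ is the substructure of $\BH^2$ induced on $K$, so that $R(\BK)\subseteq R(\BH^2)$; and the fact that $\diagonal(I^2)\subseteq K$, since only non-diagonal elements of $\BI^2$ were folded away.

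For the transport, first set $\beta_i=r\circ(\gamma_i|_K)\colon\BK\to\BI$. Restricting a walk of $L(\BH^2,\BH)$ to the substructure $\BK$ and then postcomposing with the homomorphism $r$ both preserve the adjacency relation of the link graphs (because $R(\BK)\subseteq R(\BH^2)$ and $r$ is a homomorphism), so $\beta_0,\dots,\beta_m$ is a walk in $L(\BK,\BI)$, and $\beta_0=\pi_1|_K$, $\beta_m=\pi_2|_K$ since $\pi_1,\pi_2$ already take values in $I$ on $K$ and $r$ fixes $I$. By $J$-preservation, whenever $(e,e)\in K$ with $e\in J$ we have $\gamma_0(e,e)=\pi_1(e,e)=e=\pi_2(e,e)=\gamma_m(e,e)\in J$, hence $\gamma_i(e,e)=e$ and so $\beta_i(e,e)=e$ for every $i$. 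Now let $\sigma_i\colon\BK\to\BI^2$ be $y\mapsto(\pi_1(y),\beta_i(y))$; this is a homomorphism since both components are, and $\sigma_0,\dots,\sigma_m$ is a walk in $L(\BK,\BI^2)$: the adjacency $\sigma_i\to\sigma_{i+1}$ decomposes coordinatewise into the adjacency $\beta_i\to\beta_{i+1}$ in $L(\BK,\BI)$ together with $\pi_1|_K$ being a homomorphism. Postcomposing with $s$, set $g_i=s\circ\sigma_i\colon\BK\to\BK$; then $g_0,\dots,g_m$ is a walk in $L(\BK,\BK)$. Finally $\sigma_m(y)=(\pi_1(y),\pi_2(y))=y$ for $y\in K$ and $s$ fixes $K$, giving $g_m=\mathrm{id}_\BK$; $\sigma_0(y)=(\pi_1(y),\pi_1(y))\in\diagonal(I^2)\subseteq K$, so $g_0$ is diagonal-valued; and $g_i(e,e)=s(e,\beta_i(e,e))=s(e,e)=(e,e)$ for $(e,e)\in\diagonal(J^2)\cap K$.

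For the rigidity step I first note the elementary fact that if $\mathrm{id}_\BK\to f$ is an edge of $L(\BK,\BK)$ then $f(y)$ dominates $y$ in $\BK$ for every $y\in K$: instantiating the defining condition of the edge at an arbitrary tuple of $R(\BK)$ that contains $y$, with $f$ placed in that one coordinate and $\mathrm{id}_\BK$ in all the others, yields exactly the tuple obtained from it by replacing $y$ with $f(y)$. Now I show $g_i=\mathrm{id}_\BK$ for all $i$ by downward induction; the base case $i=m$ is (i). For the inductive step, assume $g_{i+1}=\mathrm{id}_\BK$; since $L(\BK,\BK)$ is undirected, $\mathrm{id}_\BK\to g_i$ is an edge, so $g_i(y)$ dominates $y$ in $\BK$ for all $y\in K$. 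If $g_i(y)\neq y$ for some $y$, then $y$ is a dominated element of $\BK$, hence $y\in\diagonal(J^2)$ because $\BK$ is $\diagonal(J^2)$-non-foldable; but then $y\in\diagonal(J^2)\cap K$, so $g_i(y)=y$ by (ii), a contradiction. Hence $g_i=\mathrm{id}_\BK$, completing the induction; in particular $g_0=\mathrm{id}_\BK$. Together with (iii), every $(c,d)\in K$ satisfies $(c,d)=g_0(c,d)=(\pi_1(c,d),\pi_1(c,d))=(c,c)$, so $c=d$; thus $K\subseteq\diagonal(I^2)$ and $\BK$ contains only diagonal elements.

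The delicate part is the transport in the second paragraph: one must check that every map in play is a genuine homomorphism and, more carefully, that the link-graph adjacency — which requires the tuple condition for all mixed choices of the two endpoint maps across the coordinates, not merely for the two ``pure'' choices — is inherited under restriction of the source to an induced substructure, postcomposition with a homomorphism, and coordinatewise pairing of two maps into a product. The one place the hypothesis $J\neq\emptyset$ genuinely enters is the use of $J$-preservation to obtain property (ii), which is exactly what keeps the downward induction from breaking on the set $\diagonal(J^2)$, whose elements may well be dominated in $\BK$.
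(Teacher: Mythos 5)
Your proof is correct. It runs on the same engine as the paper's: take the $J$-walk joining $\pi_1$ and $\pi_2$ in $L(\BH^2,\BH)$, push it through the fold retractions onto $\BK$ to obtain a sequence of endomorphisms of $\BK$ containing $\mathrm{id}_{\BK}$, observe that link-adjacency to the identity converts any change of value into a domination in $\BK$, and let $\diagonal(J^2)$-non-foldability together with $J$-preservation rule such changes out. Where you genuinely deviate is the endgame. The paper pairs $h_i$ with the fixed $\pi_2$ and doubles the walk, so as to connect $r\circ(\pi_1,\pi_2)=\mathrm{id}_{\BK}$ to the swap map $r\circ(\pi_2,\pi_1)$, and then derives the contradiction from a hypothetical non-diagonal $(c,d)\in K$ using the symmetry of $\BK$ (so that $(d,c)\in K$) and a first-change-point argument. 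You instead pair the fixed $\pi_1$ with the transported walk, connecting $\mathrm{id}_{\BK}=s\circ(\pi_1,\pi_2)$ to the diagonal-valued map $s\circ(\pi_1,\pi_1)$, prove by downward induction that every map in the walk equals $\mathrm{id}_{\BK}$, and read off $K\subseteq\diagonal(I^2)$ directly. This buys a small simplification: no doubling and no use of the symmetry of $\BK$ inside the lemma (symmetry is still consumed via Lemma~\ref{le:symmetric} to get $\diagonal(J^2)$-non-foldability), at the price of the more bureaucratic verification that link-adjacency survives restriction to the induced substructure $\BK$, postcomposition with the retractions $r$ and $s$, and coordinatewise pairing into $\BI^2$ --- all of which you check correctly. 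Your property (ii) plays precisely the role that $J$-preservation plays at the end of the paper's argument, where the dominated element is forced into $\diagonal(J^2)$.
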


\begin{proof}
Let $\BI$ and $\BK$ be as in the statement. It suffices to show that $\BK$ contains only diagonal elements. Let $r$ be the natural retraction from $\BH^2$ to $\BK$ obtained from a successive composition of folds. Let $\pi_1=h_1,\dots,h_\seq=\pi_2$ be a $J$-walk in $L(\BH^2,\BH)$ connecting $\pi_1$ and $\pi_2$ and consider the family of maps $\phi_1,\dots,\phi_{2\seq-1}:K\rightarrow K$, where
$$
\phi_i =
\begin{cases}
r(h_i,\pi_2)		&	\text{ if } i \leq \seq,	\\
r(\pi_2,h_{2\seq-i})	&	\text{ if } i >\seq.
\end{cases}
$$

It follows directly from the construction that every $\phi_i$ is an endomorphism of $\BK$ (i.e., a homomorphism from $\BK$ to $\BK$). If $\BK$ contains some non-diagonal element $(c,d)$, then it must also contain $(d,c)$. Hence, $\phi_1\neq \phi_{2\seq-1}$, since $\phi_1(c,d)=r(c,d)=(c,d)$ and $\phi_{2\seq-1}(c,d)=r(d,c)=(d,c)$. It follows that there exists some $i\leq\seq$ such that $\phi_1\neq \phi_i$ or there exists some $i\geq \seq$ such that $\phi_i\neq \phi_{2\seq-1}$. We shall consider only the first case since the proof for the other case is symmetric. Let $i$ be the minimum such that $\phi_1\neq \phi_i$. Also, let $(a,b)\in K$ with the property that $\phi_i(a,b) \neq \phi_1(a,b)=\phi_{i-1}(a,b)$. We shall prove that $\phi_i(a,b)$ dominates $(a,b)$ in $\BK$. Indeed, let $ R \in \tau$ of arity $\arity$, let $((a_1,b_1),\dots,(a_k,b_k))$ be any tuple in $ R(\BK)$, and let $j\in\{1,\dots,k\}$ be such that $(a_j,b_j)=(a,b)$. Consider now the $\arity$-tuple $(c_1,\dots,c_k)$, where $c_\ell = h_i(a_\ell,b_\ell) = h_i(a,b)$ for $j = \ell$ and $c_\ell = h_{i-1}(a_\ell,b_\ell)$ for $\ell \neq j$. Since $h_{i-1}$ and $h_i$ are adjacent in $L(\BH^2,\BH)$, it follows that $(c_1,\dots,c_k)\in R(\BH)$. Hence, the tuple $(d_1,\dots,d_k)$ with $d_\ell=r(c_\ell,b_\ell)$ belongs to $ R(\BK)$. Notice that, by definition, $d_j=r(h_i(a,b),b)=\phi_i(a,b)$ and $d_\ell = r(h_{i-1}(a_\ell,b_\ell),b_\ell) = \phi_{i-1}(a_\ell,b_\ell)=(a_\ell,b_\ell)$ for $\ell \neq j$.

We have just shown that $\phi_i(a,b)$ dominates $(a,b)$ in $\BK$. We know that $\BK$ is $\diagonal(J^2)$-non-foldable which does not lead yet to contradiction as it could be the case that $a=b$ and $a\in J$. In this case, note that $h_1(a,b)=h_\seq(a,b)=a$. Since $a\in J$ and $h_1,\dots,h_\seq$ is $J$-preserving, it follows that $h_j(a,b)=a$ for every $1\leq j\leq \seq$. This contradicts the fact that $\phi_i(a,b)\neq \phi_1(a,b)$.
\end{proof}

Note that Remark \ref{pro:effective} is a direct consequence of Lemma \ref{le:path-dism}.

\begin{lemma}[{[(\ref{itemdismantles}) $\Rightarrow$ (\ref{itemdismantlesrestricted})]}]
\label{le:distmantledrestricted}
If $\BH$ dismantles to a substructure $\BI$ whose universe contains $J$ and such that $\BI^2$ dismantles to its diagonal, then $\BH^2$ dismantles to a substructure $\BK$ where its universe $K$ satisfies $\diagonal(J^2)\subseteq K\subseteq\diagonal(H^2)$.
\end{lemma}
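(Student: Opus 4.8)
The plan is to push the dismantling of $\BH$ down to $\BI$ up to the level of squares, and then splice on the given dismantling of $\BI^2$ to its diagonal. First I would record the single-fold lifting already invoked at the start of the proof of Lemma~\ref{le:symmetricstructures}: if $a$ is dominated in a $\tau$-structure $\BJ$, say by $b$ (so $a\neq b$), and $\BJ'$ is the substructure of $\BJ$ induced by $J\setminus\{a\}$, then $\BJ^2$ dismantles to $(\BJ')^2$. The universes differ in exactly $\{(a,c):c\in J\}\cup\{(c,a):c\in J\}$, and I would delete these, one at a time, in the order: every $(a,c)$ with $c\in J\setminus\{a\}$, then every $(c,a)$ with $c\in J\setminus\{a\}$, and finally $(a,a)$. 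Along the way $(a,c)$ is dominated by $(b,c)$, $(c,a)$ by $(c,b)$, and $(a,a)$ by $(b,b)$: given a tuple of the current (induced) substructure of $\BJ^2$ whose $i$th coordinate is the element to be deleted, one projects it to its two factors, applies the domination of $a$ by $b$ in $\BJ$ to the relevant projection(s), and recombines; the new tuple stays in the current substructure because $b\neq a$, so the coordinate we introduced---$(b,c)$, $(c,b)$, or $(b,b)$---is never of an already-deleted form, and for the same reason the dominating element itself is still present.

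Then I would apply this fold by fold along a dismantling sequence $\BH=\BJ_0,\BJ_1,\dots,\BJ_m=\BI$ witnessing that $\BH$ dismantles to $\BI$: removing the dominated element $a_j$ of $\BJ_j$ lifts, by the previous paragraph, to a dismantling of $\BJ_j^2$ to $\BJ_{j+1}^2$. Since dismantling sequences may be concatenated at a common endpoint, it follows that $\BH^2=\BJ_0^2$ dismantles to $\BJ_m^2=\BI^2$.

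Finally, by hypothesis $\BI^2$ dismantles to $\diagonal(\BI^2)$; concatenating once more, $\BH^2$ dismantles to $\BK:=\diagonal(\BI^2)$, whose universe is $K=\diagonal(I^2)=\{(a,a):a\in I\}$. Since $J\subseteq I\subseteq H$, this gives $\diagonal(J^2)\subseteq K\subseteq\diagonal(H^2)$, which is precisely the asserted condition on $\BK$, and we are done.

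The only genuinely technical ingredient is the single-fold lifting; its content is routine, the single point needing attention being the order in which the pairs $(a,c)$, $(c,a)$, $(a,a)$ are deleted so that each deletion is a legitimate fold in the current induced substructure. Everything else is transitivity of ``dismantles to'' together with the trivial inclusion $\diagonal(J^2)\subseteq\diagonal(I^2)\subseteq\diagonal(H^2)$.
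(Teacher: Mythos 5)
Your argument is correct for the implication as the sentence is printed, i.e.\ (A1) $\Rightarrow$ (A2): the single-fold lifting (removing $(a,c)$, then $(c,a)$, then $(a,a)$, dominated respectively by $(b,c)$, $(c,b)$, $(b,b)$, with the check that the dominating element has not yet been deleted), iterated along a dismantling of $\BH$ to $\BI$ and then concatenated with the given dismantling of $\BI^2$ to $\diagonal(\BI^2)$, is sound, and the final universe $\diagonal(I^2)$ indeed sits between $\diagonal(J^2)$ and $\diagonal(H^2)$. However, this is essentially the paper's Lemma \ref{le:retraction}, whose proof is exactly your construction (and which moreover records the extra feature, needed later, that diagonal elements are only ever folded onto diagonal elements); so as a proof of the sentence you were given, your write-up duplicates that lemma rather than the one under review.

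The catch is that Lemma \ref{le:distmantledrestricted} is, per its bracketed annotation and the implication table in Section \ref{section4}, meant to supply the converse direction (A2) $\Rightarrow$ (A1): the displayed hypothesis and conclusion are transposed by a typo, and the paper's own proof accordingly begins by assuming that $\BH^2$ dismantles to a substructure $\BK$ with $\diagonal(J^2)\subseteq K\subseteq\diagonal(H^2)$ and ends by deriving (A1). Your argument gives nothing in that direction, and it genuinely needs a different idea: take a dismantling sequence $\BH^2=\BJ_0,\dots,\BJ_\ell=\BK$, look at the last $\BJ_i$ whose universe still contains all of $\diagonal(H^2)$, note that the element folded there is a diagonal element $(a,a)$ dominated by some $(b,c)$ with, say, $b\neq a$, deduce from tuples of the form $((a_1,a_1),\dots,(a_k,a_k))$ that $b$ dominates $a$ in $\BH$ itself, fold $a$ in $\BH$ (note $a\notin J$ since $(a,a)\notin K\supseteq\diagonal(J^2)$), observe that $\BH^2$ then dismantles to the square of the reduced structure, apply Lemma \ref{le:greedy} to retain the dismantling to $\BK$, and iterate. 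So, to match the paper, either prove this converse direction or note explicitly that the direction you did prove is already covered, in stronger form, by Lemma \ref{le:retraction}.
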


\begin{proof}
Assume that $\BH^2$ dismantles to a substructure $\BK$ where its universe $K$ satisfies $\diagonal(J^2)\subseteq K\subseteq\diagonal(H^2)$. We can assume that  $K\neq\diagonal(H^2)$ since otherwise there is nothing to prove.  Let $\BH^2=\BJ_0,\dots,\BJ_{\ell}=\BK$ the dismantling sequence and  $\BJ_i$ the last structure in the sequence whose domain contains $\diagonal(H^2)$. Consequently, $\BJ_i$ contains a diagonal element $(a,a)$ that is dominated by some other element $(b,c)$. Assume that $b\neq a$ (the other case is analogous). We claim that $b$ dominates $a$ in $\BH$. Indeed, let $R\in\tau$, let $(a_1,\dots,a_k)$ be any tuple in $R(\BH)$ and let $j$ such that $a_j=a$. Note that $R(\BJ_i)$ contains tuple $((a_1,a_1),\dots,(a_k,a_k))$ and, consequently, it also contains $((a_1,a_1),\dots,(a_{j-1},a_{j-1}),(b,c),(a_{j+1},a_{j+1}),\dots,(a_k,a_k))$. It follows that $R(\BH)$ contains $(a_1,\dots,a_{j-1},b,a_{j+1},\dots,a_k)$ and we are done. Then, every element of the form $(x,a)$ or $(a,x)$ is dominated by $(x,b)$ or $(b,x)$ in $\BH^2$, respectively, and hence, $\BH^2$ dismantles to $\BI^2$ where $\BI$ is the structure obtained by dismantling $a$ in $\BH$. It follows by Lemma \ref{le:greedy} that $\BI^2$ dismantles to $\BK$. Iterating this argument we obtain statement (\ref{itemdismantlesrestricted}).
\end{proof}

\begin{lemma}[{[(\ref{itemdismantlesrestricted}) $\Rightarrow$ (\ref{itemdismantles})]}]
\label{le:retraction}
Assume that $\BH$ dismantles to a substructure $\BI$ whose universe contains $J$ and such that $\BI^2$ dismantles to its diagonal. Then, there is a dismantling sequence $\BH^2=\BJ_0,\dots,\BJ_{\ell}=\diagonal(\BI^2)$ such that, for every $0< i\leq \ell$, if $u_i$ is the element folded in $\BJ_{i-1}$ to obtain $\BJ_i$ and $u_i$ is a diagonal element, then $u_i$ is dominated by a (different) diagonal element in $\BJ_{i-1}$.
\end{lemma}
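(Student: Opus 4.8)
The plan is to assemble the sequence in two consecutive phases that mirror the two-step greedy procedure of Remark~\ref{pro:effective}: first dismantle $\BH^2$ down to $\BI^2$, and then dismantle $\BI^2$ down to $\diagonal(\BI^2)$. (The set $J$ of the hypothesis plays no role in the construction; it only restricts which $\BI$ we are handed.)

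For the first phase, I would fix a dismantling sequence witnessing that $\BH$ dismantles to $\BI$, say $\BH=\BJ'_0,\dots,\BJ'_m=\BI$, where for each $j$ the element $a_j$ is dominated in $\BJ'_j$ by some $b_j\neq a_j$ and $\BJ'_{j+1}$ is the substructure of $\BJ'_j$ induced by $J'_j\setminus\{a_j\}$. I would lift each such fold to a \emph{block} of folds on the square, processing $j=0,\dots,m-1$ in order. At step $j$ the current structure will be the induced substructure of $\BH^2$ on $J'_j\times J'_j$, which is exactly $(\BJ'_j)^2$; inside it I would (i) fold every $(a_j,x)$ with $x\in J'_j\setminus\{a_j\}$, each dominated by $(b_j,x)$; then (ii) fold every $(x,a_j)$ with $x\in J'_j\setminus\{a_j\}$, each dominated by $(x,b_j)$; and finally (iii) fold $(a_j,a_j)$, dominated by $(b_j,b_j)$. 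The ordering is the crux of the argument: once the entire ``cross'' $\{a_j\}\times J'_j\cup J'_j\times\{a_j\}$ other than $(a_j,a_j)$ itself has been removed, the \emph{diagonal} element $(b_j,b_j)$ is still present and dominates $(a_j,a_j)$, and since $b_j\neq a_j$ it is a different diagonal element---precisely the condition demanded by the lemma for a diagonal fold. Note that $(a_j,a_j)$ is the only diagonal element folded during step $j$, the folds in (i) and (ii) being non-diagonal.

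I would then dispatch the routine verifications. Every structure produced is an induced substructure of $\BH^2$, and $(\BJ'_j)^2$ coincides with the induced substructure of $\BH^2$ on $J'_j\times J'_j$ (unwinding the definitions of product and induced substructure). Domination passes to induced substructures: if $v$ dominates $u$ in $\BP$ and $\BP'$ is an induced substructure of $\BP$ with $u,v\in P'$, then $v$ dominates $u$ in $\BP'$, because replacing $u$ by $v$ in a tuple of $R(\BP')\subseteq R(\BP)$ changes a single coordinate, to $v\in P'$. Finally, from the fact that $b_j$ dominates $a_j$ in $\BJ'_j$ one obtains, by applying this domination to the first and second coordinate projections of tuples of $R((\BJ'_j)^2)$ separately, that $(b_j,x)$ dominates $(a_j,x)$, that $(x,b_j)$ dominates $(x,a_j)$, and that $(b_j,b_j)$ dominates $(a_j,a_j)$ in $(\BJ'_j)^2$; combined with the previous point these dominations survive in the relevant reductions, where $b_j\neq a_j$ guarantees the dominating element has not been removed. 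After step $j$ the current structure is the induced substructure on $(J'_j\setminus\{a_j\})^2=J'_{j+1}\times J'_{j+1}$, i.e.\ $(\BJ'_{j+1})^2$, and after all $m$ steps we arrive at $\BI^2$.

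For the second phase, I would simply take any dismantling sequence witnessing the hypothesis that $\BI^2$ dismantles to $\diagonal(\BI^2)$. Here nothing needs to be checked: the terminal structure $\diagonal(\BI^2)$ has universe exactly $\diagonal(I^2)$, so every intermediate structure has universe sandwiched between $\diagonal(I^2)$ and $I^2$ and hence still contains all diagonal elements of $\BH^2$; therefore no diagonal element is ever folded in this phase, and the lemma's condition holds vacuously there. Concatenating the two phases gives a dismantling sequence $\BH^2=\BJ_0,\dots,\BJ_\ell=\diagonal(\BI^2)$ in which every diagonal fold has the form $(a_j,a_j)\mapsto(b_j,b_j)$, as required. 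The only genuine obstacle is getting the first phase right---choosing the ordering so that, within each block, the diagonal element is folded last, and then checking that $(b_j,b_j)$ dominates $(a_j,a_j)$ at that moment; the remaining work is bookkeeping about induced substructures and coordinatewise domination.
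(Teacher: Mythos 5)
Your proposal is correct and follows essentially the same route as the paper: lift each fold $a_j\mapsto b_j$ of the dismantling $\BH\to\BI$ to the ``cross'' of $a_j$ in the square, sending $(a_j,a_j)$ to the distinct diagonal element $(b_j,b_j)$, and then append the given dismantling of $\BI^2$ to $\diagonal(\BI^2)$, where no diagonal element is ever removed. (Only a cosmetic slip: the phase-two universes contain all diagonal elements of $\BI^2$, not of $\BH^2$, but this does not affect the conclusion that all phase-two folds are non-diagonal.)
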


\begin{proof}
Let $a_1,\dots,a_r$ be a sequence of elements to be folded to obtain $\BI$ from $\BH$. We construct a sequence of elements to be folded to obtain $\diagonal(\BI^2)$ from $\BH^2$ as follows: In a first stage, if $a_1$ was dominated by some element $b_1$ in $\BH$, we fold all elements of the form $(x,a_1)$, $(a_1,x)$, or $(a_1,a_1)$ to $(x,b_1)$, $(b_1,x)$, or $(b_1,b_1)$, respectively. In a second stage, one folds (again, in an arbitrary order) all elements of the form $(x,a_2)$, $(a_2,x)$, and $(a_2,a_2)$ that are still left, and continues in the same manner until one obtains $\BI^2$. At this point, one proceeds dismantling $\BI^2$ to its diagonal as originally was done. It is easy to see that the sequence finally obtained satisfies the desired properties.
\end{proof}

We will require the following definition. Let $\BH^2=\BJ_0,\dots,\BJ_{\ell}=\diagonal(\BI^2)$ be the sequence provided by Lemma \ref{le:retraction}. For every $0<i\leq\ell$, let $s_i$ be the fold of $\BJ_{i-1}$ into $\BJ_i$ defined in the natural way (i.e., $s_i$ acts as the identity on $J_i$ and maps $u_i$ to any element that dominates it in $\BJ_{i-1}$) and define, for every $0 < i\leq \ell$,
\begin{equation}
\label{eq:r}
r_i :=s_i \circ \cdots \circ s_1
\end{equation}
and $r_0$ to be the identity. Note that $r_i$ defines a retraction of $\BH^2$ into $\BJ_i$ and that, again by Lemma \ref{le:retraction}, we can assume that $r_i$ maps every diagonal element into a diagonal element. Considering this, we have the following additional lemma, which was inspired by \cite[Lemma 5.2]{1-brightwell}.

\begin{lemma}
\label{le:retraction2}
Under the assumptions of Lemma \ref{le:retraction}, let $\BG$ be a (possibly infinite) $\tau$-structure, $\phi\in \Hom(\BG,\BH^2)$, $X\subseteq G$, and let  $\omega:\BG\rightarrow \BH^2$ be the map defined as
$$
\omega(x) = 
\begin{cases}
(r_{\ell-\dist(x,X)}\circ \phi)(x),	&	\text{ if } \dist(x,X) \leq \ell,	\\
\phi(x),					&	\text{ otherwise},
\end{cases}
$$
where $r_i$ is as in Formula (\ref{eq:r}) and $\dist(x,X)$ is the distance from $x$ to $X$ in $\BG$. Then, $\omega$ is a homomorphism from $\BG$ to $\BH^2$. Furthermore, if $X$ is finite and $\BG$ is locally finite, then $\phi$ and $\omega$ are connected in $C(\BG,\BH^2)$ by a $\diagonal(J^2)$-preserving walk.
\end{lemma}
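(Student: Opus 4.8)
The plan is to first verify directly that $\omega$ is a homomorphism, using the fact that inside any tuple of a relation of $\BG$ all coordinates have the same distance to $X$ up to $\pm1$; and then, assuming $X$ finite and $\BG$ locally finite, to connect $\phi$ to $\omega$ in $C(\BG,\BH^2)$ by applying the folds $s_1,\dots,s_\ell$ one ``shell'' $\{x:\dist(x,X)=m\}$ at a time, and within each shell one vertex at a time, checking at each step that we stay inside $\Hom(\BG,\BH^2)$. Throughout we use that $\diagonal(J^2)\subseteq J_\ell$ (as $\BJ_\ell=\diagonal(\BI^2)$ and $J\subseteq I$) and hence $\diagonal(J^2)\subseteq J_i$ for every $i$, since the universes of the $\BJ_i$ are decreasing.

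\emph{$\omega$ is a homomorphism.} Fix $R\in\tau$ of arity $k$ and $\ba=(x_1,\dots,x_k)\in R(\BG)$, and put $d_i=\dist(x_i,X)$. Any two coordinates of $\ba$ lie at distance at most $1$ in $\BG$, so the triangle inequality gives $|d_i-d_j|\le1$; let $d=\min_i d_i$. Since $r_0$ is the identity, in every case $\omega(x_i)=r_{q_i}(\phi(x_i))$ with $q_i=\ell-\min(d_i,\ell)$. If $d\ge\ell$ then all $q_i=0$ and $\omega(\ba)=\phi(\ba)\in R(\BH^2)$; so assume $d\le\ell-1$ and set $p=\ell-d-1\ge0$, so that every $q_i\in\{p,p+1\}$. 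Let $\mathbf{c}:=r_p(\phi(\ba))$; as $\phi$ is a homomorphism and $r_p\colon\BH^2\to\BJ_p$ is a retraction, $\mathbf{c}\in R(\BJ_p)$. Now $\omega(\ba)$ is obtained from $\mathbf{c}$ by applying $s_{p+1}$ to the coordinates $i$ with $q_i=p+1$; since $s_{p+1}$ sends $u_{p+1}$ to an element $b_{p+1}$ that dominates it in $\BJ_p$ and fixes everything else, $\omega(\ba)$ arises from $\mathbf{c}\in R(\BJ_p)$ by replacing some occurrences of $u_{p+1}$ by $b_{p+1}$; by domination each single such replacement keeps the tuple in $R(\BJ_p)$, so $\omega(\ba)\in R(\BJ_p)\subseteq R(\BH^2)$. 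The same computation with $p'=\min(j,\ell-d-1)$ shows that, for every $0\le j\le\ell$, the map $\omega_j(x):=r_{\min(j,\,\max(\ell-\dist(x,X),0))}(\phi(x))$ is a homomorphism; note $\omega_0=\phi$ and $\omega_\ell=\omega$.

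\emph{The walk.} Assume $X$ finite and $\BG$ locally finite, so $B:=\{x:\dist(x,X)\le\ell\}$ is finite. One checks that $\omega_j$ and $\omega_{j-1}$ agree off $D_j:=\{x:\dist(x,X)\le\ell-j\}\subseteq B$, that on $D_j$ one has $\omega_j=s_j\circ\omega_{j-1}$, and hence that they differ only on the finite set $Y:=\{x\in D_j:\omega_{j-1}(x)=u_j\}$, where $\omega_{j-1}$ takes value $u_j$ and $\omega_j$ takes value $b_j$. Enumerating $Y=\{y_1,\dots,y_N\}$ and letting $\nu_t$ agree with $\omega_j$ on $\{y_1,\dots,y_t\}$ and with $\omega_{j-1}$ elsewhere, we get $\nu_0=\omega_{j-1}$, $\nu_N=\omega_j$, with consecutive maps differing in at most one value; concatenating these over $j=1,\dots,\ell$ yields a walk in $C(\BG,\BH^2)$ from $\phi$ to $\omega$. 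Each $\nu_t$ is a homomorphism: given $\ba\in R(\BG)$, if no coordinate of $\ba$ lies in $D_j$ then $\nu_t(\ba)=\omega_{j-1}(\ba)\in R(\BH^2)$; if some coordinate of $\ba$ lies in $D_j$, then every coordinate $z$ of $\ba$ satisfies $\dist(z,X)\le\ell-j+1$, hence $\min(j-1,\max(\ell-\dist(z,X),0))=j-1$ and $\omega_{j-1}(z)=r_{j-1}(\phi(z))$, so $\omega_{j-1}(\ba)=r_{j-1}(\phi(\ba))\in R(\BJ_{j-1})$, and $\nu_t(\ba)$ is obtained from this tuple of $R(\BJ_{j-1})$ by replacing some occurrences of $u_j$ by $b_j$ (the coordinates at which $\nu_t$ and $\omega_{j-1}$ differ carry value $u_j$), hence lies in $R(\BJ_{j-1})\subseteq R(\BH^2)$ by domination.

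\emph{$\diagonal(J^2)$-preservation.} Since $\diagonal(J^2)\subseteq J_i$ for every $i$ and each $r_i$ is the identity on $J_i$, every $r_i$ fixes each $(a,a)\in\diagonal(J^2)$; moreover $u_j\notin J_j\supseteq\diagonal(J^2)$, so $u_j\ne(a,a)$. Thus if $\phi(x)=(a,a)\in\diagonal(J^2)$, then $\omega_{j-1}(x)=r_{\,\cdot\,}(\phi(x))=(a,a)\ne u_j$ for all $j$, so $x$ never lies in any of the sets $Y$; hence every map in the constructed walk sends $x$ to $(a,a)$, and in particular $\phi(x)=\omega(x)$. Therefore the walk satisfies the definition of a $\diagonal(J^2)$-preserving walk, which completes the proof.

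The main obstacle is the homomorphism bookkeeping in the first two steps: one must set up the ``shell'' and ``fold'' indices so that each tuple to be checked, after applying the appropriate partial retraction $r_p$ (respectively $r_{j-1}$), already lies in $R(\BJ_p)$ (respectively $R(\BJ_{j-1})$), since this is exactly what makes it legitimate to then apply the single fold $s_{p+1}$ (respectively $s_j$) to an arbitrary subset of the coordinates via repeated domination. The estimate $|d_i-d_j|\le1$ for coordinates of a common tuple is what forces all the relevant indices to be within one of each other and thereby makes this work; the remaining ingredients — finiteness of $B$, one-vertex-at-a-time interpolation, and the preservation of $\diagonal(J^2)$ — are routine.
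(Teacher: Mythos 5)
Your proof is correct and follows essentially the same strategy as the paper's: homomorphy is checked by noting that the retraction depths of coordinates in a common tuple differ by at most one, so after applying $r_p$ the tuple lies in $R(\BJ_p)$ and the extra single fold $s_{p+1}$ is absorbed by domination, and the walk is built by applying one fold at a time and one vertex at a time, with $\diagonal(J^2)$-preservation coming from each $r_i$ fixing $J_i\supseteq\diagonal(J^2)$ and $u_j\notin J_j$. The only difference is bookkeeping: the paper interpolates by induction on the fold index, enlarging the reference set to its $1$-neighborhood at each backward step (an outward wave $r_{\max(i-\dist(x,X),0)}\circ\phi$), whereas you use the explicit family $r_{\min(j,\max(\ell-\dist(x,X),0))}\circ\phi$; both reduce to the same domination argument.
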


\begin{proof}
Given $Y \subseteq G$, for every $i=0,\dots,\ell$, let $\omega_{i,Y}$ be the map defined as
$$
\omega_{i,Y}(x) = 
\begin{cases}
(r_{i-\dist(x,Y)}\circ \phi)(x),	&	\text{ if } \dist(x,Y) \leq i,	\\
\phi(x),					&	\text{ otherwise}.
\end{cases}
$$

Note that $\omega=\omega_{\ell,X}$. By induction, we shall prove that $\omega_{i,X}$ is a homomorphism from $\BG$ to $\BH^2$ and, if $X$ is finite and $\BG$ is locally finite, then $\phi$ and $\omega_{i,X}$ are connected in $C(\BG,\BH^2)$ by a $J_i$-preserving walk.

The base case, $i=0$, is trivial. For the inductive case $(i-1\Rightarrow i)$, we can assume that the map $\omega'=\omega_{i-1,X'}$ is a homomorphism in $\Hom(\BG,\BH^2)$, where $X'$ is the set of all elements of $G$ at distance at most $1$ from $X$.

\medskip
\noindent
{\bf Claim.} For every $Z\subseteq X$, the map $\Psi_Z$ that acts as $s_i \circ \omega'$ in $Z$ and as $\omega'$ elsewhere, is a homomorphism from $\BG$ to $\BH^2$. 

\medskip
Indeed, let $ R$ be any relation symbol in $\tau$ and let $\bx=(x_1,\dots,x_k)$ be any tuple in $ R(\BG)$. By inductive hypothesis, $\omega'(\bx)$ belongs to $ R(\BH^2)$. If $\{x_1,\dots,x_k\}\cap Z=\emptyset$, then $\Psi_Z(\bx)=\omega'(\bx)$ and nothing needs to be done. So, assume that $\bx$ contains some element from $Z$. Since $Z\subseteq X$, it follows that $\{x_1,\dots,x_k\}\subseteq X'$. Consequently, $\omega'(\bx) = (r_{i-1} \circ \phi)(\bx)$, which is a tuple of $R(\BJ_{i-1})$. Let $u_i$ be the element that is folded in $\BJ_{i-1}$ to obtain $\BJ_i$ and note that $\Psi_Z(\bx)$ is obtained by replacing, in $\omega'(\bx)$, some (possibly zero) occurrences of $u_i$ by $s_i(u_i)$. Since $s_i(u_i)$ dominates $u_i$ in $\BJ_{i-1}$, it follows that $\Psi_Z(\bx)$ belongs to $ R(\BJ_{i-1})$ (and hence to $ R(\BH^2)$), finishing the proof of the claim.

\medskip
It follows that $\omega=\Psi_X \in\Hom(\BG,\BH^2)$. Furthermore, assume that $X=\{x_1,\dots,x_n\}$ is finite and $\BG$ is locally finite. Then
$$
\Psi_{\emptyset},\Psi_{\{x_1\}},\Psi_{\{x_1,x_2\}},\dots,\Psi_{\{x_1,x_2,\dots,x_n\}}
$$
is a walk in $C(\BG,\BH^2)$ connecting $\omega'=\Psi_{\emptyset}$ and $\omega=\Psi_X$. 

Since $\BG$ is locally finite, it follows that $X'$ is finite. By inductive hypothesis, there is a $J_{i-1}$-preserving walk joining $\phi$ and $\omega'$ in $C(\BG,\BH^2)$. Hence, by concatenating the two walks, it follows that $\phi$ and $\omega$ are also connected in $C(\BG,\BH^2)$. 

It remains to see that the walk thus constructed is $J_i$-preserving. Let $x\in G$ such that $\phi(x)=\omega(x)=(a,b)\in J_i$. Since $(a,b)\in J_i$ it follows from the definition of $\omega'$ that $\omega'(x)=\phi(x)=(a,b)$. Hence it is only necessary to observe that the walk joining $\phi$ and $\omega'$ is $J_i$-preserving (by inductive hypothesis and $J_i\subseteq J_{i-1})$ and that the walk joining $\omega'$ and $\omega$ is also $J_i$-preserving (directly from its construction).
\end{proof}

\begin{lemma}[{[(\ref{itemdismantlesrestricted}) $\Rightarrow$ (\ref{itemC1})]}]
\label{le:connectedness}
If $\BH$ dismantles to a substructure $\BI$ whose universe contains $J$ and such that $\BI^2$ dismantles to its diagonal, then $C(\BG,\BH)$ is $J$-connected for every locally finite $\tau$-structure $\BG$.
\end{lemma}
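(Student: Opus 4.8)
The plan is to lift the problem to $\BH^2$ via the pairing trick and then push it back down via the two projections, using the two retraction lemmas just established. Fix homomorphisms $\phi,\psi\in\Hom(\BG,\BH)$ that agree on all but finitely many elements, let $X=\{x\in G:\phi(x)\neq\psi(x)\}$ (a finite set), and form the pairing $\eta:=(\phi,\psi)\colon\BG\to\BH^2$, $x\mapsto(\phi(x),\psi(x))$, which is a homomorphism because $\phi$ and $\psi$ are. I would then apply Lemma~\ref{le:retraction2} to $\eta$ and $X$: its hypotheses are exactly those of the present lemma (namely those of Lemma~\ref{le:retraction}), and since $\BG$ is locally finite and $X$ is finite, it produces a homomorphism $\omega\colon\BG\to\BH^2$ together with a $\diagonal(J^2)$-preserving walk $\eta=\theta_0,\theta_1,\dots,\theta_m=\omega$ in $C(\BG,\BH^2)$.

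Next I would observe that $\omega$ takes only diagonal values. For $x$ at distance at least $1$ from $X$ the value $\eta(x)=(\phi(x),\phi(x))$ is already diagonal and the retractions $r_i$ preserve diagonal elements (Lemma~\ref{le:retraction}), so $\omega(x)$ is diagonal; and for $x\in X$ we have $\omega(x)=(r_\ell\circ\eta)(x)\in\BJ_\ell=\diagonal(\BI^2)$, all of whose elements are diagonal. Hence $\chi:=\pi_1\circ\omega=\pi_2\circ\omega$ is a well-defined homomorphism from $\BG$ to $\BH$. Now $\pi_1\circ\theta_0,\dots,\pi_1\circ\theta_m$ is a walk in $C(\BG,\BH)$ from $\phi$ to $\chi$ (consecutive maps still differ in at most one value, since composing with $\pi_1$ cannot enlarge the set of coordinates on which two maps disagree), and $\pi_2\circ\theta_m,\dots,\pi_2\circ\theta_0$ is a walk from $\chi$ to $\psi$; concatenating the two yields a walk in $C(\BG,\BH)$ from $\phi$ to $\psi$.

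The step requiring real care, and the one I expect to be the main obstacle, is verifying that this concatenated walk is $J$-preserving. Suppose $x\in G$ satisfies $\phi(x)=\psi(x)=a\in J$; then $\eta(x)=(a,a)\in\diagonal(J^2)$. Since $(a,a)$ lies in $\diagonal(I^2)=J_\ell\subseteq J_i$ for every $0\le i\leq\ell$, and each $r_i$, being a retraction onto $\BJ_i$, acts as the identity on $J_i$, every $r_i$ fixes $(a,a)$; hence, whether $\omega(x)=\eta(x)$ or $\omega(x)=(r_i\circ\eta)(x)$, we obtain $\omega(x)=(a,a)$. Thus $\theta_0(x)=\theta_m(x)=(a,a)\in\diagonal(J^2)$, and since the walk $\theta_0,\dots,\theta_m$ is $\diagonal(J^2)$-preserving we get $\theta_i(x)=(a,a)$ for every $i$; hence every map occurring in the two projected walks, and $\chi$ itself, takes the value $a$ at $x$, so the concatenation is indeed a $J$-walk joining $\phi$ and $\psi$. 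The remaining verifications --- that $\eta$ and the maps $\pi_1\circ\theta_i$, $\pi_2\circ\theta_i$ are homomorphisms --- are routine, so essentially all the content lies in the two cited lemmas together with this last piece of bookkeeping.
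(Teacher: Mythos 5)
Your proposal is correct and follows essentially the same route as the paper's proof: pair $\phi,\psi$ into a homomorphism to $\BH^2$, apply Lemma~\ref{le:retraction2} with $X$ the (finite) disagreement set to get a $\diagonal(J^2)$-preserving walk to a diagonal-valued $\omega$, project via $\pi_1$ and $\pi_2$, and concatenate. Your $J$-preservation bookkeeping (that the retractions $r_i$ fix $(a,a)$ for $a\in J\subseteq I$, so $\omega(x)=\eta(x)=(a,a)$ and the $\diagonal(J^2)$-preserving property propagates to every $\theta_i$) is exactly the argument the paper uses, just spelled out a bit more explicitly.
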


\begin{proof}
Assume that statement (\ref{itemdismantlesrestricted}) holds. Let $r_0,\dots,r_{\ell}$ be the maps provided by Lemma \ref{le:retraction} (see Equation (\ref{eq:r})), let $\BG$ be any locally finite $\tau$-structure, let $\phi,\psi\in \Hom(\BG,\BH)$, and let $D$ be the set of elements in which $\phi$ and $\psi$ disagree, which we can assume is finite. Then, the map $\Phi: G \rightarrow H^2$ with $x \mapsto \Phi(x)=(\phi(x),\psi(x))$ for $x \in G$, defines a homomorphism from $\BG$ to $\BH^2$. Since $D$ is finite, it follows from Lemma \ref{le:retraction2} that there is $\diagonal(J^2)$-preserving walk $\Psi_1,\dots,\Psi_\seq$ in $C(\BG,\BH^2)$ that joins $\Phi$ and the map $\omega:G\rightarrow H^2$ sending $x\in G$ to $(r_{\ell-j} \circ \Phi)(x)$, where $j$ is the minimum between $\ell$ and the distance from $x$ to $D$. It is not difficult to see that $\omega(x)$ is a diagonal element for every $x\in G$. Indeed, if $x\not\in D$, it follows from the fact (seen just right after Lemma \ref{le:retraction}) that every retraction $r_{\ell-j}$ maps diagonal elements into diagonal elements and, if $x \in D$, it follows from the fact that $r_{\ell}(\BH^2)=\diagonal(\BI^2)$. Hence, $\pi_1 \circ \Psi_\seq = \pi_2 \circ \Psi_\seq$. Consequently, $\pi_1 \circ \Psi_1,\pi_1 \circ \Psi_2,\dots, \pi_1 \circ \Psi_\seq = \pi_2 \circ \Psi_\seq, \pi_2 \circ \Psi_{\seq-1},\dots,\pi_2 \circ \Psi_1$ defines a walk that joins $\pi_1 \circ \Psi_1=\phi$ and $\pi_2 \circ \Psi_1=\psi$. It remains to see that the walk thus defined is $J$-preserving. To do so, we just have to notice that if $x$ is an element in $\BG$ such that $\phi(x)=\psi(x)\in J$, then we have by construction that $\Psi_1(x)=(x,x)$ and $\Psi_\seq(x)=(x,x)$, and then use the fact that $\Psi_1,\dots,\Psi_\seq$ is $\diagonal(J^2)$-preserving.
\end{proof}

\begin{lemma}[{[(\ref{itemdismantlesrestricted}) $\Rightarrow$ (\ref{itemgraph})]}]
\label{le:interpolate}
If $\BH$ dismantles to a substructure $\BI$ whose universe contains $J$ and such that $\BI^2$ dismantles to its diagonal, then there exists $\gap \geq 0$ such that $\Hom(\BG,\BH)$ is strongly $J$-irreducible with gap $\gap$ for every $\tau$-structure $\BG$.
\end{lemma}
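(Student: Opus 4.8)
The plan is to prove the implication (\ref{itemdismantlesrestricted})$\Rightarrow$(\ref{itemgraph}) by an explicit gluing construction, in the spirit of Lemma~\ref{le:connectedness} but tuned to mixing rather than connectedness. First I would fix a dismantling sequence $\BH^2=\BJ_0,\dots,\BJ_\ell=\diagonal(\BI^2)$ as provided by Lemma~\ref{le:retraction}, together with the retractions $r_i:=s_i\circ\cdots\circ s_1$ from Formula~(\ref{eq:r}); recall that $r_0$ is the identity, that $r_\ell$ retracts $\BH^2$ onto $\diagonal(\BI^2)$, and that each $r_i$ is the identity on the universe $J_i$ of $\BJ_i$, hence fixes $J_\ell\supseteq\diagonal(J^2)$ (here we use that $J$ is contained in the universe of $\BI$). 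I claim that $\gap:=2\ell+1$ works for every $\tau$-structure $\BG$.

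The one genuinely new ingredient is a ``reverse'' analogue of Lemma~\ref{le:retraction2}: for $\Phi\in\Hom(\BG,\BH^2)$ and $Y\subseteq G$, the map $\Omega_Y(x):=\bigl(r_{\min(\dist(x,Y),\ell)}\circ\Phi\bigr)(x)$ is a homomorphism from $\BG$ to $\BH^2$. Indeed, for $R\in\tau$ and $\bx=(x_1,\dots,x_k)\in R(\BG)$, any two coordinates of $\bx$ are at distance at most $1$ in $\BG$, so the retraction indices used on the coordinates take at most two consecutive values $\mu\le\mu+1$; then $(r_\mu\circ\Phi)(\bx)\in R(\BJ_\mu)$, and $\Omega_Y(\bx)$ is obtained from it by replacing some (possibly zero) occurrences of the element folded from $\BJ_\mu$ to $\BJ_{\mu+1}$ by an element dominating it in $\BJ_\mu$, so $\Omega_Y(\bx)\in R(\BJ_\mu)\subseteq R(\BH^2)$ (the case $\mu=\ell$ is immediate). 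This is the only place a new computation is needed; note that it uses the $\min(\cdot,\ell)$ truncation rather than the $\ell-\dist(\cdot,\cdot)$ profile of Lemma~\ref{le:retraction2}.

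Given $\phi,\psi\in\Hom(\BG,\BH)$ and $V,W\subseteq G$ with $\dist(V,W)\geq 2\ell+1$, set $\Phi:=(\phi,\psi)$ and let $\Omega_V,\Omega_W$ be the above maps for $Y=V$ and $Y=W$. Define $\gamma\colon G\to H$ by $\gamma(x)=\pi_1(\Omega_V(x))$ if $\dist(x,V)\le\ell$, by $\gamma(x)=\pi_2(\Omega_W(x))$ if $\dist(x,W)\le\ell$, and by $\gamma(x)=\pi_1(r_\ell(\Phi(x)))$ otherwise. Since $\dist(x,V)+\dist(x,W)\geq\dist(V,W)\geq 2\ell+1$, the first two cases are mutually exclusive, so $\gamma$ is well defined; on $V$ it equals $\pi_1(r_0(\Phi(\cdot)))=\phi$ and on $W$ it equals $\psi$; and on $(\phi,\psi)^{-1}(\diagonal(J^2))$ it equals $\phi=\psi$ because there $\Phi(x)$ is a diagonal element of $\diagonal(J^2)$, fixed by every $r_i$. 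That $\gamma$ is a homomorphism is checked tuple by tuple: a tuple of $R(\BG)$ with no coordinate in the region $\{\dist(\cdot,W)\le\ell\}$ is mapped by $\gamma$ exactly as by the homomorphism $\pi_1\circ\Omega_V$ (using that $\Omega_V=r_\ell\circ\Phi$ on the ``far'' region), and symmetrically with $V$ and $W$ interchanged; while a tuple with a coordinate in each of the two regions has, by the triangle inequality together with $\dist(V,W)\geq 2\ell+1$, all of its coordinates at distance $\geq\ell$ from both $V$ and $W$, so $\Omega_V$ and $\Omega_W$ both equal $r_\ell\circ\Phi$ on it, $\gamma$ coincides with $\pi_1\circ r_\ell\circ\Phi$ there, and we are done.

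Hence $\Hom(\BG,\BH)$ is $(V,W)$-mixing with respect to $J$ whenever $\dist(V,W)\geq 2\ell+1$, i.e., strongly $J$-irreducible with gap $2\ell+1$; since $2\ell+1$ depends only on $\BH$ and $J$, the same gap serves every $\BG$, which is (\ref{itemgraph}). I expect the ``seam'' — tuples straddling the $V$-region and the $W$-region — to be the main obstacle: the reason for truncating the retraction index as $\min(\cdot,\ell)$ is precisely that it makes such a straddling tuple land entirely inside the diagonal structure $r_\ell(\BH^2)=\diagonal(\BI^2)$, where $\pi_1$ and $\pi_2$ agree, and the reason for taking $\gap=2\ell+1$ rather than $2\ell$ is exactly that it keeps the seam a full distance $\ell$ away from both regions.
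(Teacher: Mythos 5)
Your proof is correct and follows essentially the same strategy as the paper's: pair $\phi,\psi$ into a homomorphism $(\phi,\psi)\colon\BG\to\BH^2$, push it toward the diagonal $\diagonal(\BI^2)$ by the retractions $r_i$ of Lemma~\ref{le:retraction} at a rate graded by distance, and output $\pi_1$ near $V$ and $\pi_2$ near $W$, using that the two projections agree on the fully retracted (diagonal) middle zone and that the $r_i$ fix $\diagonal(J^2)$ pointwise. The only differences are minor: the paper simply invokes Lemma~\ref{le:retraction2} with $X$ the set of points at distance at least $\ell$ from $V\cup W$ (so no new homomorphism verification is needed) and obtains gap $2\ell$, whereas you verify directly a reverse-profile variant $x\mapsto (r_{\min(\dist(x,Y),\ell)}\circ\Phi)(x)$ and glue two such maps, obtaining gap $2\ell+1$.
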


\begin{proof} 
Assume that statement (\ref{itemdismantlesrestricted}) holds. Let $r_0,\dots,r_{\ell}$ be the maps provided by Lemma \ref{le:retraction} (see Equation (\ref{eq:r})) and define $\gap$ to be $2\ell$. Let $\BG$ be a $\tau$-structure, $V, W \subseteq G$ with $\dist(V,W) \geq g$, and  $\phi, \psi \in \Hom(\BG,\BH)$. Then, the map $x\mapsto (\phi(x),\psi(x))$ is a homomorphism from $\BG$ to $\BH^2$. Let $\omega$ be the homomorphism from $\BG$ to $\BH^2$ provided by Lemma \ref{le:retraction2} with map $x\mapsto (\phi(x),\psi(x))$ and where $X\subseteq G$ is the set of all elements at distance at least $\ell$ from $V\cup W$. It follows by construction that $\omega(x)=(\phi(x),\psi(x))$ for every $x\in V\cup W$ and that $\omega(X)\subseteq\diagonal(I^2)$.

Define $h(x)$ to be $\pi_1(\omega(x))$ if $x$ is at distance less than $\ell$ to $V$ and $\pi_2(\omega(x))$ otherwise. We shall prove that $h$ defines a homomorphism from $\BG$ to $\BH$. Let $R$ be any relation symbol in $\tau$ of arity $\arity$, and let $\bx=(x_1,\dots,x_\arity)$ be any tuple in $R(\BG)$. The fact that $h$ is a homomorphism follows directly from the following claim: $h(\bx) = \pi_1(\omega(\bx))$ if the minimum distance $j$ from any element in the tuple to $V$ is at most $\ell-1$ and $h(\bx) = \pi_2(\omega(\bx))$ otherwise. To prove the claim, notice that, since all the elements in $\bx$ are at distance $j$ or $j+1$ from $V$, we only need to consider the case when $j=\ell-1$. Let $x_i$ be any element in $\bx$. If the distance of $x_i$ to $V$ is $\ell-1$, then $h(x_i)=\pi_1(\omega(x_i))$ by definition. Assume, otherwise, that the distance of $x_i$ to $V$ is $\ell$. Since the distance from $V$ to $W$ is at least $\gap=2\ell$, then $x_i$ is at distance at least $\ell$ from $W$, and hence $x_i\in X$. Hence, $\omega(x_i)$ is a diagonal element, and hence $\pi_2(\omega(x_i))=\pi_1(\omega(x_i))$.

Furthermore, since for every $x\in V\cup W$, $\omega(x)=(\phi(x),\psi(x))$, it follows that $h$ agrees with $\phi$ on $V$ and $\psi$ on $W$. 

Now, let $x\in G$ be any element such that $\phi(x)=\psi(x)\in J$. Note
that, by construction, $\omega$ is $\diagonal(I^2)$-preserving. 
Since $J\subseteq I$, it follows that $\omega(x)=(\phi(x),\psi(x))$. Since $h(x)$ is either the first or second projection of $\omega(x)$, it follows that $h(x)=\phi(x)=\psi(x)$.
\end{proof}

Recall that the definition of the label map  $\rho_{\BH}: T_\BH \to H$ that sends every walk $w$ in $T_{\BH}$ to its ending point (see Section \ref{forest}).

\begin{lemma}[{[(\ref{itemtree}) $\Rightarrow$ (\ref{itemdismantlesrestricted})]}]
\label{le:free-tree} 
If there exists $\gap \geq 0$ such that $\Hom(\BT_{\BH^2},\BH)$ is $(\{x\},W)$-mixing with respect to $J$ with gap $\gap$ for all $x \in T_{\BH^2}$ and $W \subseteq T_\BH^2$, then $\BH$ dismantles to a substructure $\BI$ whose universe contains $J$ and such that $\BI^2$ dismantles to its diagonal.
\end{lemma}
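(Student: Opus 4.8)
The plan is to prove (\ref{itemdismantlesrestricted}) by running the greedy procedure of Remark \ref{pro:effective} and showing that, under the mixing hypothesis, the structure it produces is forced to sit inside a diagonal. First I would fix a $J$-non-foldable structure $\BI$ obtained from $\BH$ by folding only elements of $H\setminus J$ (so that $J\subseteq I$ and, as is immediate from the argument in the proof of Lemma \ref{le:symmetricstructures}, $\BH^2$ dismantles to $\BI^2$), and then apply Lemma \ref{le:symmetric} to get a symmetric, minimal $\BK$ obtained from $\BI^2$ by folding only non-diagonal elements; by that lemma $\BK$ is $\diagonal(J^2)$-non-foldable, while by construction $\diagonal(I^2)\subseteq K\subseteq I^2$ and $\BK$ is the substructure of $\BH^2$ induced by $K$. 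I would write $r\colon\BH^2\to\BK$ for the associated retraction (a composition of folds, acting as the identity on $K$), set $\rho:=r\circ\rho_{\BH^2}\in\Hom(\BT_{\BH^2},\BK)$ --- which agrees with the label map $\rho_{\BK}$ on all of $T_{\BK}$, since $r$ fixes $K$ --- and then put $\phi:=\pi_1\circ\rho$ and $\psi:=\pi_2\circ\rho$, two homomorphisms in $\Hom(\BT_{\BH^2},\BH)$ satisfying $(\phi,\psi)=\rho$ as maps into $H^2$. It then suffices to show $K\subseteq\diagonal(H^2)$: together with $\diagonal(I^2)\subseteq K\subseteq I^2$ this forces $K=\diagonal(I^2)$, i.e.\ $\BI^2$ dismantles to $\diagonal(\BI^2)$, which is exactly (\ref{itemdismantlesrestricted}).

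Towards a contradiction, I would assume $K$ contains a non-diagonal pair $(c,d)$ with $c\neq d$, and take $x_0\in T_{\BK}\subseteq T_{\BH^2}$ to be the length-$0$ walk at $(c,d)$, so that $\rho(x_0)=(c,d)$, $\phi(x_0)=c$ and $\psi(x_0)=d$. Set $W:=\{y\in T_{\BH^2}:\dist(x_0,y)\geq\gap\}$; since $\BH^2$ is finite, $\BT_{\BH^2}$ is locally finite, so $T_{\BH^2}\setminus W$ is a finite ball and $W$ is cofinite. Applying the mixing hypothesis to $\phi$, $\psi$, $x=x_0$ and this $W$ produces $\gamma\in\Hom(\BT_{\BH^2},\BH)$ with $\gamma(x_0)=\phi(x_0)=c$, with $\gamma$ agreeing with $\psi$ on $W$, and with $\gamma(x)=\phi(x)=\psi(x)$ for every $x\in(\phi,\psi)^{-1}(\diagonal(J^2))=\rho^{-1}(\diagonal(J^2))$.

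The crux will be to fold $\gamma$ back into a $\BK$-valued homomorphism. The map $(\phi,\gamma)\colon\BT_{\BH^2}\to\BH^2$, $w\mapsto(\phi(w),\gamma(w))$, is a homomorphism since each coordinate is, hence $\Gamma:=r\circ(\phi,\gamma)\in\Hom(\BT_{\BH^2},\BK)$. For $x\in W$ one has $(\phi,\gamma)(x)=(\phi(x),\psi(x))=\rho(x)\in K$, so $\Gamma(x)=\rho(x)$; for $x\in\rho^{-1}(\diagonal(J^2))$ one has $\gamma(x)=\psi(x)$, so again $\Gamma(x)=\rho(x)$. Hence $\Gamma$ agrees with $\rho$ on the cofinite set $U:=W\cup\rho^{-1}(\diagonal(J^2))$, which contains $\rho^{-1}(\diagonal(J^2))$. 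Restricting to the substructure $\BT_{\BK}$ and using $\rho|_{T_{\BK}}=\rho_{\BK}$, the map $\Gamma|_{T_{\BK}}\in\Hom(\BT_{\BK},\BK)$ agrees with $\rho_{\BK}$ on $U\cap T_{\BK}$, a cofinite subset of $T_{\BK}$ containing $\rho_{\BK}^{-1}(\diagonal(J^2))$. Since $\BK$ is $\diagonal(J^2)$-non-foldable, Lemma \ref{le:universaltree} (with $\BK$ and $\diagonal(J^2)$ in the roles of $\BH$ and $J$) yields $\Gamma|_{T_{\BK}}=\rho_{\BK}$, and in particular $\Gamma(x_0)=\rho_{\BK}(x_0)=(c,d)$. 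But directly $\Gamma(x_0)=r(\phi(x_0),\gamma(x_0))=r(c,c)=(c,c)$, because $c\in I$ forces $(c,c)\in\diagonal(I^2)\subseteq K$ and $r$ fixes $K$. As $c\neq d$, this is a contradiction, so $K\subseteq\diagonal(H^2)$, completing the argument.

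The step I expect to be the real obstacle is exactly this ``fold back'': the homomorphism handed over by the mixing property a priori lives only in $\BH$, and it has to be repackaged as a homomorphism into the non-foldable structure $\BK$ while keeping agreement with $\rho$ both on $W$ and on $\rho^{-1}(\diagonal(J^2))$, so that the freezing statement of Lemma \ref{le:universaltree} can be applied. Pairing $\gamma$ with the fixed first coordinate $\phi$ and then re-applying $r$ is what makes this possible, and the one place genuine care is needed is checking that both agreement sets survive this operation. A minor routine point is the verification (paralleling the proof of Lemma \ref{le:symmetricstructures}) that dominations in $\BH$ and in $\BI^2$ lift to $\BH^2$, which is what legitimizes $r$ as a retraction $\BH^2\to\BK$.
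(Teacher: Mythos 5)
Your proof is correct and follows essentially the same route as the paper: pass to a symmetric, minimal, $\Delta(J^2)$-non-foldable $\BK$ via Lemma \ref{le:symmetric}, use the retraction $r$ to fold the homomorphism(s) supplied by the mixing hypothesis back into $\Hom(\BT_{\BK},\BK)$, and contradict the frozenness of the label map (Lemma \ref{le:universaltree}) at the length-$0$ walk based at a non-diagonal element of $K$. The only divergence is minor: the paper invokes the mixing hypothesis twice (with the two projections in swapped roles) and derives its contradiction from the symmetry of $\BK$ via $r(a_2,a_1)=(a_2,a_1)\neq(a_1,a_2)$, whereas you apply it once and pair $\gamma$ with $\phi$ itself, getting the contradiction from the diagonal value $(c,c)\neq(c,d)$ at $x_0$ --- a slight streamlining that checks out.
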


\begin{proof}
For the sake of contradiction, let's suppose that statement (\ref{itemtree}) holds, but statement (\ref{itemdismantlesrestricted}) does not. Let $\BI$ be any $J$-non-foldable relational structure obtained by dismantling $\BH$, let $\BK$ be the symmetric $\diagonal(J^2)$-non-foldable relational structure given by Lemma \ref{le:symmetric}, let $r$ be a retraction of $\BH^2$ onto $\BK$ defined in the natural way, and let $\gap$ be any value given by statement $(\ref{itemtree})$.

Since statement (\ref{itemdismantlesrestricted}) does not hold, it follows that $K$ contains a non-diagonal element $a=(a_1,a_2)$. Let $w_0$ be the (unique) walk of length $0$ starting at $a$, let $V=\{w_0\}$, and let $W_\gap$ be the set containing all walks in $\BT_{\BH^2}$  of length at least $\gap$. Let $\phi$ and $\psi$ be the homomorphisms from $\BT_{\BH^2}$ to $\BH$ defined as $\phi := \pi_1 \circ \rho_{\BH^2}$ and $\psi := \pi_2 \circ \rho_{\BH^2}$. From statement (\ref{itemtree}), it follows that there exist homomorphisms $h_1,h_2: \BT_{\BH^2}\rightarrow\BH$ such that $h_1$ agrees with $\psi$ on $V$ and with $\phi$ on $W_\gap$, $h_2$ agrees with $\phi$ on $V$ and with $\psi$ on $W_\gap$, and $h_1(w) = h_2(w) = \phi(w) = \psi(w)$ for all $w \in \gamma^{-1}(\Delta(J^2))$.

Since $\BT_{\BK}$ is a substructure of $\BT_{\BH^2}$, it follows that the mapping $\rho':T_{\BK}\rightarrow K$ defined as $\rho'(w)=r(h_1(w),h_2(w))$ is a homomorphism from $T_{\BK}$ to $\BK$. By construction, $\rho'$ agrees with $\rho_{\BK}$ in $W_\gap$ and every $w\in T_{\BK}$ with $\rho_{\BK}(w)\in\diagonal(J^2)$. Hence, by Lemma~\ref{le:universaltree}, $\rho'$ must be identical to $\rho_{\BK}$, implying, in particular, that $\rho_{\BK}$ and $\rho'$ agree in $w_0$. However, 
$$
\rho'(w_0)=r(h_1(w_0),h_2(w_0))=r(a_2,a_1)=(a_2,a_1),
$$
where the last equality follows from the fact that $\BK$ is symmetric. We obtain a contradiction, since $\rho_{\BK}(w_0)=a=(a_1,a_2)$ and $a$ is a non-diagonal element.
\end{proof}

\section{Gibbs measures and applications}
\label{section5}

\subsection{Basic definitions}

Given a finite $\tau$-structure $\BH$ with universe $H$, a {\bf weight function} for $\BH$ is a map $\lambda: H \to \mathbb{R}^+$.

Let $\BG$ be a locally finite $\tau$-structure. If $V \subseteq G$ is a finite set and $\phi \in \Hom(\BG,\BH)$, we define $\mathbb{P}_{V,\phi}$ to be the probability measure on $\Hom(\BG,\BH)$ given by
$$
\mathbb{P}_{V,\phi}(\{\psi\}) := 
\begin{cases}
Z_{V,\phi}(\lambda)^{-1} \prod_{x \in V} \lambda(\psi(x))	&	\text{if } \left.\psi\right|_V \cup \left.\phi\right|_{G \setminus V}  \in \Hom(\BG,\BH),	\\
0											&	\text{otherwise},
\end{cases}
$$
for $\psi \in \Hom(\BG,\BH)$, where $\left.\psi\right|_V \cup \left.\phi\right|_{G \setminus V}$ is the map that coincides with $\psi$ in $V$ and with $\phi$ in $G \setminus V$, and $Z_{V,\phi}(\lambda)$ is a normalization constant---the \emph{partition function}---defined as
$$
Z_{V,\phi}(\lambda) := \sum_{\substack{\psi \in \Hom(\BG,\BH)\\ \left.\psi\right|_V \cup \left.\phi\right|_{G \setminus V}  \in \Hom(\BG,\BH)}} \prod_{x \in V} \lambda(\psi(x)).
$$

We will call the collection of probability measures $\{\mathbb{P}_{V,\phi}\}$, the {\bf Gibbs $(\BG,\BH,\lambda)$-specification}. The {\bf boundary} of a set $V \subseteq G$, denoted by $\partial V$, is defined as the set of elements in $G$ at distance exactly $1$ from $V$. Notice that $\mathbb{P}_{V,\phi}$ depends exclusively on $\left.\phi\right|_{\partial V}$. Now, consider events of the form
$$
A(\phi,V) = \left\{\psi \in \Hom(\BG,\BH): \left.\psi\right|_V = \left.\phi\right|_V\right\}.
$$

Next, consider the $\sigma$-algebra $\mathcal{F}$ generated by all events of the form $A(\phi,V)$ for $V$ finite, and define $\mathcal{M}(\BG,\BH)$ to be the set of probability measures on $(\Hom(\BG,\BH),\mathcal{F})$.

A measure $\mu \in \mathcal{M}(\BG,\BH)$ is a {\bf Gibbs measure} for the Gibbs $(\BG,\BH,\lambda)$-specification if for any finite $V \subseteq G$ and for all $\psi \in \Hom(\BG,\BH)$,
$$
\mu\left(A(\psi,V) \middle\vert A(\phi, G \setminus V) \right) = \mathbb{P}_{V,\phi}\left(\{\psi\}\right) \text{ for $\mu$-a.e. $\phi \in \Hom(\BG,\BH)$}.
$$

In other words, the probability distribution of a random $\psi$ inside a finite $V$ conditioned on its values outside $V$ to coincide with those of $\phi$, depends only on the values of $\left.\psi\right|_V$ and on the boundary, $\left.\phi\right|_{\partial V}$. Furthermore, the conditional distribution is the same as for $\mathbb{P}_{V,\phi}$ (see also \cite[Definition 2.1]{1-brightwell}).

If $\Hom(\BG,\BH) \neq \emptyset$, then there always exists at least one Gibbs measure \cite[Chapter 4]{1-georgii}. A fundamental question in statistical physics is whether there exists a unique Gibbs measure or multiple for a given Gibbs $(\BG,\BH,\lambda)$-specification.

\subsection{Non-uniqueness and spatial mixing properties}

In \cite{1-brightwell}, it is shown that if $\BH$ is a graph and it is dismantlable (or equivalently, by Lemma \ref{lem:graphdism}, its square dismantles to a subgraph of its diagonal), then, for any graph $\BG$ of bounded degree (and therefore, locally finite), there exists some $\lambda$ such that there is a unique Gibbs measure \cite[Theorem 7.2]{1-brightwell}. Conversely, in \cite{1-brightwell} it is also proved that if $\BH$ is a non-dismantlable graph, then there exists $\BG$ (of bounded degree) such that for any $\lambda$ there exists multiple Gibbs measures \cite[Theorem 8.2]{1-brightwell}.

Here, following a similar path, we show that when extending this question to arbitrary relational structures, the first implication does not remain true in general, but the second still holds. More exactly,

\begin{proposition}
\label{lem:counterex}
There exists a finite $\tau$-structure $\BH$ such that $\BH^2$ dismantles to a substructure of its diagonal and a $\tau$-structure $\BG$ of bounded degree such that for any $\lambda$ there exists multiple Gibbs measures for the Gibbs $(\BG,\BH,\lambda)$-specification. Moreover, $\BH$ can be chosen so that $\BH^2$ dismantles to its full diagonal $\diagonal(\BH^2)$.
\end{proposition}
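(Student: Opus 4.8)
By Theorem~\ref{the:mainsimple} (equivalence of conditions (A2s) and (C1s)), ``$\BH^2$ dismantles to a substructure of its diagonal'' is equivalent to $\Hom(\BG,\BH)$ being strongly irreducible, with a uniform gap, for every $\BG$; and, by the Remark following Theorem~\ref{the:main}, ``$\BH^2$ dismantles to its full diagonal'' is equivalent to $\Hom(\BG,\BH)$ being TSSM for every $\BG$. By Lemma~\ref{lem:graphdism} both conditions coincide, for graphs, with dismantlability, and a dismantlable graph already admits on every bounded-degree $\BG$ a weighting with a unique Gibbs measure by \cite[Theorem~7.2]{1-brightwell}; hence the witness $\BH$ cannot be a graph, and more precisely it must be a relational structure that is \emph{not} dismantlable yet whose square still dismantles to (all of) its diagonal. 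The prototype of such a structure is the one in the Question of Section~\ref{section3}: with $\tau=\{R\}$ and $R(\BH)=\{(0,1)\}$ (more generally $R(\BH)=\{(c_1,\dots,c_k)\}$ for an enumeration $c_1,\dots,c_k$ of $H$), $\BH$ is stiff, while $\BH^2$ is the disjoint union of $\diagonal(\BH^2)$ with off-diagonal vertices that are isolated in $\BH^2$, so $\BH^2$ dismantles to its full diagonal. First I would look for a small enrichment of this prototype --- an extra relation symbol, or a few more elements --- that keeps $\BH$ non-dismantlable, keeps the property that $\BH^2$ dismantles to all of $\diagonal(\BH^2)$ (checked directly via the greedy recipe of Remark~\ref{pro:effective}: fold away the vertices of $\BH^2$ that become isolated, then the remaining symmetric off-diagonal pairs), but for which $\Hom(\BG,\BH)$ is, for a suitable $\BG$, richer than the product of point-sets and free coordinates that the plain prototype always produces.

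Given such an $\BH$, the plan is to exhibit a bounded-degree $\BG$ for which $\Hom(\BG,\BH)$ is a disjoint union $\mathcal X_0\sqcup\mathcal X_1$ of two nonempty subsets, each a union of connected components of $\Hom(\BG,\BH)$ in the reconfiguration sense --- that is, each closed under modifying a homomorphism on any finite set of vertices. For such a splitting the rest is soft and does not see $\lambda$: for $\phi\in\mathcal X_i$ and finite $V\subseteq G$ the measure $\mathbb P_{V,\phi}$ is supported on $\mathcal X_i$, so the Gibbs $(\BG,\BH,\lambda)$-specification restricted to $\mathcal X_i$ is again a Gibbs specification of the same form on the closed set $\mathcal X_i$, and hence carries a Gibbs measure $\mu_i$ by the usual compactness argument \cite[Chapter~4]{1-georgii}; since $\mu_0(\mathcal X_0)=1\neq0=\mu_1(\mathcal X_0)$, the measures $\mu_0$ and $\mu_1$ are distinct, for \emph{every} weight function $\lambda$. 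The ``moreover'' is then free, since the dismantling sequence above reaches the \emph{full} diagonal.

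I expect the main obstacle to be producing $\BH$ and $\BG$ \emph{together}. Strong irreducibility of $\Hom(\BG,\BH)$ is forced by the dismantling hypothesis, and a strongly irreducible homomorphism space that contains a frozen homomorphism is a singleton (use $(V,W)$-mixing with $V$ a cofinite set, so any candidate must coincide with the frozen homomorphism, and let $W$ exhaust $G$); thus, once $\BG$ is connected and infinite and $|\Hom(\BG,\BH)|\ge2$, there are no frozen homomorphisms, and the partition $\mathcal X_0\sqcup\mathcal X_1$ cannot be obtained by making two configurations rigid on a connected $\BG$. It therefore has to be engineered on a disconnected (but still bounded-degree) $\BG$, or through a subtler separation of the solution space that is not detected by finite modifications, while simultaneously keeping $\BH$ non-dismantlable with $\BH^2$ dismantling to its full diagonal and $\Hom(\BG,\BH)$ rich enough to split. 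It is this simultaneous juggling --- not any single verification, each of which is routine --- that is the delicate part, and where a carefully hand-picked example is needed in place of a general argument.
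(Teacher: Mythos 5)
Your strategy has a genuine gap at its core mechanism. You propose to split $\Hom(\BG,\BH)$ into two nonempty parts $\mathcal{X}_0\sqcup\mathcal{X}_1$, each closed under finite modifications, and to extract a Gibbs measure supported on each; but under the standing hypothesis this route cannot work, and your own remark about strong irreducibility already contains the reason. Since $\BH^2$ dismantles to a substructure of its diagonal, Theorem \ref{the:mainsimple} forces $\Hom(\BG,\BH)$ to be strongly irreducible with a uniform gap $\gap$ for \emph{every} $\tau$-structure $\BG$, connected or not, so retreating to a disconnected $\BG$ changes nothing. Now take $\phi\in\mathcal{X}_0$, $\psi\in\mathcal{X}_1$, a finite $V\subseteq G$, and $W$ the complement of the $\gap$-neighbourhood of $V$: the interpolating $\gamma$ agrees with $\phi$ on $V$ and with $\psi$ on $W$, hence differs from $\psi$ only on a finite set and lies in $\mathcal{X}_1$, while as $V$ grows such $\gamma$ converge pointwise to $\phi$. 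So $\mathcal{X}_1$ accumulates at every point of $\mathcal{X}_0$ and the two parts can never both be closed in the product topology; and without topological closedness the ``soft'' step fails, because closure under finite modifications alone does not yield a Gibbs measure supported on a part (in a full shift, the set of configurations with finitely many occurrences of a fixed symbol is closed under finite modifications, yet the specification has a unique Gibbs measure and no Gibbs measure sits on that set). The conceptual point of the proposition is precisely that, here, non-uniqueness cannot be produced by disconnecting the solution space --- the space is connected, indeed TSSM when the full diagonal is reached --- but must come from a genuine phase transition.

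That is what the paper's proof does, and it is an explicit construction rather than an abstract splitting. Take $\tau=\{R_1,R_2,R_3\}$, $H=\{0,1,2\}$, with $R_1(\BH)=\{(0,0),(0,1),(1,0)\}$, $R_2(\BH)=\{(1,1),(1,2),(2,1)\}$, $R_3(\BH)=\{(2,2),(2,0),(0,2)\}$; one checks directly that $\BH^2$ folds onto its full diagonal (fold $(0,1)$ and $(1,0)$ to $(0,0)$, and cyclically), while $\BH$ itself is stiff, consistent with your correct observation via Lemma \ref{lem:graphdism} and \cite{1-brightwell} that no graph can witness the statement. For $\BG$ one superimposes three copies of the $\Delta$-regular tree, $\Delta\geq 6$, the $i$th copy carried by $R_i$. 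Each $R_i$ is a hardcore constraint on two of the three values, so an arbitrary weight function $\lambda$ induces three hardcore models on the $\Delta$-regular tree with activities $\lambda(1)/\lambda(0)$, $\lambda(2)/\lambda(1)$, $\lambda(0)/\lambda(2)$, whose product is $1$; hence at least one activity is at least $1$, which exceeds the critical activity of the hardcore model on the $\Delta$-regular tree for $\Delta\geq 6$ \cite{kelly1985,1-weitz}, and the classical supercritical non-uniqueness on the tree gives multiple Gibbs measures for every $\lambda$. Your single-tuple prototype and the subsequent ``engineer a splitting'' plan do not reach this; the missing idea is to build $\BH$ and $\BG$ so that the specification itself contains a supercritical spin system for every choice of $\lambda$.
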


\begin{proposition}
\label{lem:frozengibbs}
Let $\BH$ be a finite $\tau$-structure. If $\BH^2$ does not dismantle to a substructure of $\diagonal(\BH^2)$, then there exists a $\tau$-structure $\BG$ of bounded degree such that for any $\lambda$ there exists multiple Gibbs measures for the Gibbs $(\BG,\BH,\lambda)$-specification.
\end{proposition}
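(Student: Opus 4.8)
The plan is to follow the strategy of \cite[Theorem 8.2]{1-brightwell}: to build one bounded-degree structure $\BG$ carrying two homomorphisms whose value at a single point is rigidly forced by their values at infinity, and then to invoke the standard correspondence between such rigidity and non-uniqueness of Gibbs measures. The only genuinely new difficulty compared with the graph case lies in the interaction between the two coordinates of $\BH^2$ under dismantling.

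First I would fix the combinatorial core. Using Lemma~\ref{le:greedy}, Lemma~\ref{le:symmetric} and Remark~\ref{pro:effective}, dismantle $\BH$ greedily to a $\emptyset$-non-foldable structure $\BI$, with retraction $\sigma\colon\BH\to\BI$ obtained as a composition of folds, and then dismantle $\BI^2$ by successively folding symmetric non-diagonal pairs to a symmetric $\emptyset$-non-foldable structure $\BK$, with retraction $r_2\colon\BI^2\to\BK$. Since a fold $a\mapsto b$ in $\BH$ induces the folds $(a,x)\mapsto(b,x)$ and $(x,a)\mapsto(x,b)$ in $\BH^2$, the map $\sigma\times\sigma$ is a retraction of $\BH^2$ onto $\BI^2$, and $r:=r_2\circ(\sigma\times\sigma)$ is a retraction of $\BH^2$ onto $\BK$. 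The hypothesis that $\BH^2$ does not dismantle to a substructure of $\diagonal(\BH^2)$ forces $\BK$ to contain a non-diagonal element $(a_1,a_2)$ with $a_1\neq a_2$: otherwise the universe of $\BK$ would be contained in $\diagonal(H^2)$, and since $\BH^2$ dismantles to $\BI^2$ and $\BI^2$ dismantles to $\BK$, this would contradict the hypothesis.

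Next I would take $\BG:=\BT_\BK$, the forest of walks in $\BK$; as $\BK$ is finite, every walk has boundedly many one-step extensions and a unique predecessor, so $\BG$ is locally finite and of bounded degree. By Lemma~\ref{le:universaltree} with $J=\emptyset$, the label map $\rho:=\rho_\BK\colon\BG\to\BK$ is a frozen homomorphism in $\Hom(\BG,\BK)$. Put $\phi_1:=\pi_1\circ\rho$ and $\phi_2:=\pi_2\circ\rho$, which are homomorphisms $\BG\to\BI\subseteq\BH$, and let $w_0$ be the length-$0$ walk at $(a_1,a_2)$, so that $\phi_1(w_0)=a_1\neq a_2=\phi_2(w_0)$. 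The key claim is that for every finite $V\subseteq G$ with $w_0\in V$ and every $\psi\in\Hom(\BG,\BH)$ agreeing with $\phi_1$ on $G\setminus V$ one has $\sigma(\psi(w_0))=a_1$, and symmetrically that every $\psi$ agreeing with $\phi_2$ off a finite set satisfies $\sigma(\psi(w_0))=a_2$. To prove it, note that $\mu:=(\psi,\phi_2)\colon\BG\to\BH^2$ is a homomorphism agreeing with $\rho$ on the cofinite set $G\setminus V$; hence $r\circ\mu\colon\BG\to\BK$ agrees with $r\circ\rho=\rho$ there, so frozenness of $\rho$ gives $r\circ\mu=\rho$, and in particular $r_2(\sigma(\psi(w_0)),a_2)=r(\psi(w_0),a_2)=(a_1,a_2)$. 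In the graph case this already suffices, since there $\BI$ being $\emptyset$-non-foldable forces $\BI^2$ to be $\emptyset$-non-foldable, so $\BK=\BI^2$, $r_2=\mathrm{id}$, and $\sigma(\psi(w_0))=a_1$ is immediate; for general relational structures one must upgrade $r_2(\sigma(\psi(w_0)),a_2)=(a_1,a_2)$ to $\sigma(\psi(w_0))=a_1$. I expect this to be the main obstacle, and I would handle it by a domination argument refining the proof of Lemma~\ref{le:universaltree}: running the same reasoning at every walk and exploiting that $\BG$ is a forest, one shows that $\sigma\circ\psi$ dominates $\phi_1$ along all the tuples coming from $R(\BK)$, after which the $\emptyset$-non-foldability of $\BI$, together with the symmetry of $\BK$ (which makes both coordinates available), forces $\sigma\circ\psi=\phi_1$.

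Finally I would derive non-uniqueness. Fix any weight function $\lambda$ and an exhaustion $V_1\subseteq V_2\subseteq\cdots$ of $G$ by finite sets with $w_0\in V_1$. For $i\in\{1,2\}$ the measure $\mathbb{P}_{V_n,\phi_i}$ is concentrated on homomorphisms agreeing with $\phi_i$ on $G\setminus V_n$, hence, by the key claim, on the clopen event $E_i:=\{\psi\in\Hom(\BG,\BH):\sigma(\psi(w_0))=a_i\}$. Since $\Hom(\BG,\BH)$ is compact, pass to a weak-$*$ limit point $\mu_i$ of $(\mathbb{P}_{V_n,\phi_i})_n$; by the standard fact that weak-$*$ limits of finite-volume Gibbs measures are Gibbs measures for the given (finite-range, hence quasilocal) specification (see \cite{1-georgii}), $\mu_i$ is a Gibbs measure for the Gibbs $(\BG,\BH,\lambda)$-specification, and $\mu_i(E_i)=1$ because $E_i$ is clopen with $\mathbb{P}_{V_n,\phi_i}(E_i)=1$ for all $n$. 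As $a_1\neq a_2$ and $\sigma$ fixes $a_1$ and $a_2$, the events $E_1$ and $E_2$ are disjoint, so $\mu_1\neq\mu_2$, giving at least two Gibbs measures for every $\lambda$. The statistical-mechanics part is routine; the real content, and the step I would spend most effort on, is the domination argument in the key claim---precisely the point where relational structures go beyond graphs.
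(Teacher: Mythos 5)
Your construction ($\BG=\BT_\BK$ for the symmetric $\emptyset$-non-foldable $\BK$ from Lemma~\ref{le:symmetric}, the two boundary homomorphisms $\phi_i=\pi_i\circ\rho_\BK$, frozenness of $\rho_\BK$ via Lemma~\ref{le:universaltree}, and the weak-limit argument at the end) is exactly the paper's, but your ``key claim'' contains a genuine gap, and you have correctly located it yourself. Pairing $\psi$ with $\phi_2$ and using frozenness only yields $r_2(\sigma(\psi(w_0)),a_2)=(a_1,a_2)$; the upgrade to $\sigma(\psi(w_0))=a_1$ is not established, and the sketched fix does not go through as stated: running the domination argument of Lemma~\ref{le:universaltree} inside $\BT_\BK$ only shows that $\sigma(\psi(w))$ dominates $\phi_1(w)$ with respect to the tuples obtained by projecting $R(\BK)$ to the first coordinate, which may be a proper subset of $R(\BI)$ (precisely because $\BK$ is in general a proper retract of $\BI^2$), so the $\emptyset$-non-foldability of $\BI$ cannot be invoked to force $\sigma\circ\psi=\phi_1$. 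Since $\mathbb{P}_{V_n,\phi_i}(E_i)=1$ rests entirely on this unproved pinning claim, the non-uniqueness conclusion is not yet justified.

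The paper avoids the issue by proving a weaker statement that still suffices (its Lemma~\ref{lemmafrozen}): the \emph{sets} of values achievable at $w_0$ under the two boundary conditions are disjoint. Concretely, if $\psi_1$ agrees with $\phi_1$ and $\psi_2$ agrees with $\phi_2$ off a finite set and $\psi_1(w_0)=\psi_2(w_0)$, one applies the retraction $r$ (which maps diagonal elements to diagonal elements, since $\sigma\times\sigma$ does and $r_2$ folds only non-diagonal elements) to the pair $(\psi_1,\psi_2)$; frozenness of $\rho_\BK$ forces $r\circ(\psi_1,\psi_2)=\rho_\BK$, whose value at $w_0$ is the non-diagonal element $(a_1,a_2)$, while $r$ of the diagonal element $(\psi_1(w_0),\psi_2(w_0))$ must be diagonal---a contradiction. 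You have all the ingredients for this: simply pair the two test homomorphisms with each other rather than with $\phi_2$ and $\phi_1$. With that disjointness in hand, replace your events $E_i$ as follows: take weak limits $\mu_1,\mu_2$ of $\mathbb{P}_{V_{n_k},\phi_1}$ and $\mathbb{P}_{V_{n_k},\phi_2}$, pick $a\in H$ with $\mu_1(\{\psi(w_0)=a\})>0$, and note that disjointness (with $U=G\setminus V_{n_k}$) gives $\mathbb{P}_{V_{n_k},\phi_2}(\{\psi(w_0)=a\})=0$, hence $\mu_2(\{\psi(w_0)=a\})=0$ and $\mu_1\neq\mu_2$. So the statistical-mechanics half of your argument is fine; the combinatorial half needs this redirection, because the strong ``forced value'' statement you aim for is neither proved nor needed.
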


Before proving these two results, we introduce and explore some \emph{spatial mixing} properties in this same context.

\begin{defi}
\label{def:Jsm}
Given $J \subseteq H$, we say that a Gibbs $(\BG,\BH,\lambda)$-specification satisfies {\bf spatial $J$-mixing ($J$-SM)} if there exists constants $C, \alpha > 0$ such that for all $\phi_1,\phi_2 \in \Hom(\BG,\BH)$, for all finite $V \subseteq G$, and for all $x \in V$ and $a \in H$,
\begin{equation}
\label{eq:spatial}
\left|\mathbb{P}_{V,\phi_1}(\{\psi(x) = a\}) - \mathbb{P}_{V,\phi_2}(\{\psi(x) = a\})\right|	 \leq C \cdot \exp(-\alpha \cdot \dist(x,D^J_V(\phi_1,\phi_2))),
\end{equation}
where 
$$
D^J_V(\phi_1,\phi_2) = \{x \in \partial V: (\phi_1(x), \phi_2(x)) \in H^2 \setminus \diagonal(J^2)\}
$$
and $\{\psi(x) = a\}$ refers to the event that a random $\psi$ takes the value $a$ at $x$.
\end{defi}

The definition of $J$-SM unifies and interpolates two well-known properties. If $J = \emptyset$, then $D^\emptyset_V(\phi_1,\phi_2) = \partial V$ and Equation (\ref{eq:spatial}) corresponds to the definition of {\bf weak spatial mixing (WSM)}, i.e., $\emptyset$-SM. On the other hand, if $J = H$, then $D^H_V(\phi_1,\phi_2) = \{x \in \partial V: \phi_1(x) \neq \phi_2(x)\}$ and Equation (\ref{eq:spatial}) corresponds to the definition of {\bf strong spatial mixing (SSM)}, i.e., $H$-SM.

In general, spatial mixing properties are forms of \emph{correlation decay} that have been of interest because of their many applications. On the one hand, WSM is related with uniqueness of Gibbs measures and the absence of phase transitions \cite{1-dyer}. On the other hand, SSM is a strengthening of WSM and it is related to the absence of \emph{boundary phase transitions} \cite{1-martinelli} and has connections with the existence of FPTAS for \#P-hard counting problems \cite{bandyopadhyay2008,1-weitz}, mixing time of Glauber dynamics \cite{1-dyer}, and efficient approximation algorithms for thermodynamic quantities \cite{gamarnik2009,1-briceno}.

In \cite{2-briceno}, there were explored sufficient and necessary conditions for a graph $\BH$ to have, for any graph $\BG$ of bounded degree, the existence of a weight function $\lambda$ such that the Gibbs $(\BG,\BH,\lambda)$-specification satisfies WSM and SSM. In particular, it was proved that dismantlability was equivalent to the existence of Gibbs $(\BG,\BH,\lambda)$-specifications satisfying WSM for any graph $\BG$ of bounded degree, and therefore uniqueness, since WSM implies it. In addition, it was observed that a direct consequence is that a necessary condition for SSM to hold is that $\BH$ is dismantlable, because SSM implies WSM. However, it was also shown that it is not a sufficient condition. Here, we strengthen this necessary condition and extend it to the realm of relational structures.

\begin{proposition}
\label{lem:nossm}
If $\BH^2$ does not dismantle to a substructure of $\diagonal(\BH^2)$ whose universe contains $\diagonal(J^2)$, then there exists a $\tau$-structure $\BG$ of bounded degree such that the Gibbs $(\BG,\BH,\lambda)$-specification does not satisfy $J$-SM for any $\lambda$.
\end{proposition}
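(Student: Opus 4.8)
\emph{Setting up.} The plan is to realise the failure of $J$-SM on a forest of walks, by coupling the negation of statement (\ref{itemdismantles}) of Theorem~\ref{the:main} with the frozen behaviour of the label map isolated in Lemma~\ref{le:universaltree}. The hypothesis of the Proposition is exactly the negation of (\ref{itemdismantles}) (taken with ``$J$'' there equal to $\diagonal(J^2)$), hence also the negation of (\ref{itemdismantlesrestricted}). Running the greedy procedure of Remark~\ref{pro:effective}, let $\BI$ be a $J$-non-foldable structure obtained from $\BH$ (so $J\subseteq I$), let $\BK$ be the symmetric $\diagonal(J^2)$-non-foldable structure obtained from $\BI^2$ as in Lemma~\ref{le:symmetric}, and let $r\colon\BH^2\to\BK$ be the natural retraction; since the second-stage folding pairs $(a,b)$ with $(b,a)$, this $r$ can be taken to commute with the flip $\mathrm{swap}\colon(x,y)\mapsto(y,x)$. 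Because (\ref{itemdismantlesrestricted}) fails, $\BK$ is not contained in $\diagonal(I^2)$ (otherwise $\BK=\diagonal(\BI^2)$, and $\BH\to\BI$ together with $\BI^2\to\diagonal(\BI^2)$ would witness (\ref{itemdismantlesrestricted})); fix a non-diagonal $a=(a_1,a_2)\in K$, $a_1\ne a_2$ (it is moreover not dominated in $\BK$).

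\emph{The structure and the boundary estimate.} Take $\BG:=\BT_\BK$, the forest of walks in $\BK$. It is a $\tau$-forest, and the degree of a walk $w$ depends only on $\rho_\BK(w)\in K$, so $\BG$ has bounded degree (in particular it is locally finite). Put $\phi_1:=\pi_1\circ\rho_\BK$ and $\phi_2:=\pi_2\circ\rho_\BK$ in $\Hom(\BG,\BH)$, and let $w_0$ be the length-$0$ walk at $a$, so $\phi_1(w_0)=a_1\ne a_2=\phi_2(w_0)$. Let $N:=\rho_\BK^{-1}(\diagonal(J^2))$; then $w_0\notin N$. Let $\BG'$ be the connected component of $w_0$ in the substructure of $\BG$ induced by $T_\BK\setminus N$, and for $g\ge 1$ set $V_g:=\{w\in G':\dist(w_0,w)<g\}$, a finite set containing $w_0$. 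A short case analysis shows that every $w\in\partial V_g$ either lies in $N$ (so $(\phi_1(w),\phi_2(w))=\rho_\BK(w)\in\diagonal(J^2)$) or satisfies $\dist(w_0,w)=g$; consequently $D^J_{V_g}(\phi_1,\phi_2)\subseteq\{w:\dist(w_0,w)=g\}$ and $\dist\!\bigl(w_0,D^J_{V_g}(\phi_1,\phi_2)\bigr)\ge g$ (with value $+\infty$ if this set is empty).

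\emph{The rigidity step.} Let $\chi$ be any homomorphism in the support of $\mathbb{P}_{V_g,\phi_1}$, i.e.\ $\chi=\pi_1\rho_\BK$ on $G\setminus V_g$; since $N\cap V_g=\emptyset$ we have $N\subseteq G\setminus V_g$. Then $(\chi,\pi_2\rho_\BK)\colon\BG\to\BH^2$ is a homomorphism, so $r\circ(\chi,\pi_2\rho_\BK)\in\Hom(\BT_\BK,\BK)$; on $G\setminus V_g$ it coincides with $\rho_\BK$ (there $(\chi(w),\pi_2\rho_\BK(w))=\rho_\BK(w)\in K$ is fixed by $r$), and $G\setminus V_g$ is cofinite and contains $\rho_\BK^{-1}(\diagonal(J^2))$. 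Hence Lemma~\ref{le:universaltree} forces $r\circ(\chi,\pi_2\rho_\BK)=\rho_\BK$; evaluating at $w_0$ gives $r(\chi(w_0),a_2)=(a_1,a_2)$, so $\chi(w_0)\in S:=\{c\in H:r(c,a_2)=(a_1,a_2)\}$. Symmetrically, every homomorphism $\chi'$ in the support of $\mathbb{P}_{V_g,\phi_2}$ satisfies $\chi'(w_0)\in S':=\{c\in H:r(a_1,c)=(a_1,a_2)\}$, and $\mathrm{swap}$-equivariance of $r$ rewrites $S'$ as $\{c:r(c,a_1)=(a_2,a_1)\}$. Granting $S\cap S'=\emptyset$, we get $\mathbb{P}_{V_g,\phi_1}(\psi(w_0)\in S)=1$ and $\mathbb{P}_{V_g,\phi_2}(\psi(w_0)\in S)=0$ for every weight function $\lambda$ and every $g$ (the supports are independent of $\lambda$, and $\phi_1$ itself lies in the first support). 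Summing the putative $J$-SM inequality of Definition~\ref{def:Jsm} over $a\in S$ then yields $1\le |S|\cdot C\exp(-\alpha g)$ for all $g$, which is impossible; thus no constants $C,\alpha$ exist, and $J$-SM fails on this fixed $\BG$ for every $\lambda$.

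\emph{Main obstacle.} The delicate point is precisely the disjointness $S\cap S'=\emptyset$: Lemma~\ref{le:universaltree} only pins down the central value ``up to the retraction'', so one must exclude an element $c$ with $r(c,a_2)=(a_1,a_2)$ and $r(c,a_1)=(a_2,a_1)$ simultaneously. The route I would take is to refine the dismantling: writing $r=r_\BK\circ(r_\BI\times r_\BI)$, a dominated non-diagonal element of $\BI^2$ is always dominated by one differing in a single coordinate, and I would try to propagate this through the intermediate structures so that $r_\BK^{-1}(a_1,a_2)=\{(a_1,a_2)\}$ (equivalently, that the surviving element $(a_1,a_2)$ is never used as a dominator); then $S=r_\BI^{-1}(a_1)$ and $S'=r_\BI^{-1}(a_2)$, which are disjoint since $r_\BI$ is a map and $a_1\ne a_2$. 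Should this not be arrangeable in full generality, the fallback is to replace the single-vertex read-out at $w_0$ by a comparison of the laws of $\chi$ on a finite window around $w_0$, or to pass first to $(\BH^2,\diagonal(J^2))$ and use that product Gibbs specifications on $\BH^2$ inherit $\diagonal(J^2)$-SM from $J$-SM on $\BH$; in each case the only remaining work is this coordinate bookkeeping around $r$, everything else being a direct application of Lemma~\ref{le:universaltree} together with the boundary estimate above.
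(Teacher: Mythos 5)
Your construction (the forest of walks $\BT_\BK$, the base point $w_0$ over a surviving non-diagonal element $(a_1,a_2)$, the choice of $V_g$ avoiding $\rho_\BK^{-1}(\diagonal(J^2))$, and the boundary estimate $\dist(w_0,D^J_{V_g}(\phi_1,\phi_2))\ge g$) is exactly the paper's setup, and the probabilistic endgame is also the same. But the step you yourself flag as the ``main obstacle'' is a genuine gap, and it is one you created by comparing each admissible homomorphism against the \emph{fixed} reference on the other side: knowing $\chi(w_0)\in S=\{c:r(c,a_2)=(a_1,a_2)\}$ and $\chi'(w_0)\in S'=\{c:r(a_1,c)=(a_1,a_2)\}$ and then needing $S\cap S'=\emptyset$ forces you to control the fibre $r^{-1}(a_1,a_2)$, and your proposed remedy (arranging the dismantling so that the surviving element $(a_1,a_2)$ is never used as a dominator) is neither justified nor obviously arrangeable. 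As written, the proof is incomplete at precisely the point where the separation of the two conditional measures must be established.

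The repair is simpler than your fallback options and is what the paper does (its Lemma \ref{lemmafrozen}): do not pair $\chi$ with $\phi_2$; pair an arbitrary $\chi$ admissible for the boundary condition $\phi_1$ with an arbitrary $\chi'$ admissible for $\phi_2$, and suppose they agree at $w_0$, say $\chi(w_0)=\chi'(w_0)=c$. Since $\chi$ agrees with $\pi_1\circ\rho_\BK$ and $\chi'$ with $\pi_2\circ\rho_\BK$ on the cofinite set $G\setminus V_g\supseteq\rho_\BK^{-1}(\diagonal(J^2))$, the map $r\circ(\chi,\chi')$ is a homomorphism from $\BT_\BK$ to $\BK$ agreeing with $\rho_\BK$ there, so Lemma \ref{le:universaltree} forces it to equal $\rho_\BK$; evaluating at $w_0$ gives $r(c,c)=(a_1,a_2)$, which is impossible because $r$ maps diagonal elements to diagonal elements (a property the construction of $r$ via Lemma \ref{le:retraction} already provides) while $(a_1,a_2)$ is non-diagonal. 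Hence the two \emph{value sets} $X_1$ and $X_2$ at $w_0$ are disjoint, which is all your final summation needs (pick $a\in X_1$ with $\mathbb{P}_{V_g,\phi_1}(\{\psi(w_0)=a\})\ge 1/|H|$ and note $\mathbb{P}_{V_g,\phi_2}(\{\psi(w_0)=a\})=0$). No swap-equivariance of $r$, no control of $r^{-1}(a_1,a_2)$, and no refinement of the dismantling are needed; with this substitution the rest of your argument goes through and coincides with the paper's proof of Proposition \ref{lem:nossm}.
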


Two direct corollaries of this fact are the following.

\begin{corollary}
\label{cor:nowsm}
If $\BH^2$ does not dismantle to some substructure of the diagonal $\diagonal(\BH^2)$, then there exists a $\tau$-structure $\BG$ of bounded degree such that the Gibbs $(\BG,\BH,\lambda)$-specification does not satisfy WSM for any $\lambda$.
\end{corollary}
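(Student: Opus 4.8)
The plan is to obtain this as the special case $J=\emptyset$ of Proposition \ref{lem:nossm}; all that is required is to check that this specialization correctly matches up the hypotheses and conclusions. Recall, as observed immediately after Definition \ref{def:Jsm}, that weak spatial mixing is by definition exactly $\emptyset$-SM: for $J=\emptyset$ one has $D^\emptyset_V(\phi_1,\phi_2)=\partial V$, so inequality (\ref{eq:spatial}) is precisely the WSM decay estimate. Hence ``the Gibbs $(\BG,\BH,\lambda)$-specification does not satisfy $\emptyset$-SM for any $\lambda$'' is literally the same assertion as ``$\dots$ does not satisfy WSM for any $\lambda$''.

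On the hypothesis side, for $J=\emptyset$ we have $J^2=\emptyset$ and therefore $\diagonal(J^2)=\emptyset$, so the condition in Proposition \ref{lem:nossm} that ``$\BH^2$ does not dismantle to a substructure of $\diagonal(\BH^2)$ whose universe contains $\diagonal(J^2)$'' is exactly ``$\BH^2$ does not dismantle to a substructure of $\diagonal(\BH^2)$'', which is the hypothesis of the corollary. Thus, applying Proposition \ref{lem:nossm} with $J=\emptyset$ produces a $\tau$-structure $\BG$ of bounded degree for which the Gibbs $(\BG,\BH,\lambda)$-specification fails $\emptyset$-SM, that is, fails WSM, for every weight function $\lambda$, which is the claim.

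There is essentially no obstacle here: the corollary is a direct instantiation, and all the substantive content lies in the proof of Proposition \ref{lem:nossm}. One expects that argument to proceed by exhibiting a pair of ``frozen''-type homomorphisms on the forest-of-walks structure $\BT_{\BH^2}$ in the spirit of Lemma \ref{le:universaltree} and Lemma \ref{le:free-tree}, and then transferring this rigidity to a bounded-degree $\BG$ so that the conditional marginal at a fixed vertex retains a non-vanishing dependence on an arbitrarily distant boundary disagreement; the only thing the corollary adds is the routine bookkeeping that $J=\emptyset$ degenerates $J$-SM to WSM and $\diagonal(J^2)$ to $\emptyset$.
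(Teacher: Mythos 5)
Your proposal is correct and is exactly the paper's argument: the paper states Corollary \ref{cor:nowsm} as a direct consequence of Proposition \ref{lem:nossm} with $J=\emptyset$, using that $\diagonal(J^2)=\emptyset$ makes the dismantling hypotheses coincide and that $\emptyset$-SM is by definition WSM. Nothing further is needed.
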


\begin{corollary}
\label{cor:nossm}
If $\BH^2$ does not dismantle to the full diagonal $\diagonal(\BH^2)$, then there exists a $\tau$-structure $\BG$ of bounded degree such that the Gibbs $(\BG,\BH,\lambda)$-specification does not satisfy SSM for any $\lambda$.
\end{corollary}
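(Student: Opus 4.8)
The plan is to obtain Corollary~\ref{cor:nossm} as the $J=H$ instance of Proposition~\ref{lem:nossm}. Two small things need to be checked. First, that $H$-SM \emph{is} SSM: this is exactly the observation recorded right after Definition~\ref{def:Jsm}, where for $J=H$ the set $D^H_V(\phi_1,\phi_2)$ specializes to $\{x\in\partial V:\phi_1(x)\neq\phi_2(x)\}$ and Equation~(\ref{eq:spatial}) becomes the usual strong spatial mixing estimate. Second, that for $J=H$ the hypothesis of Proposition~\ref{lem:nossm}, namely ``$\BH^2$ does not dismantle to a substructure of $\diagonal(\BH^2)$ whose universe contains $\diagonal(H^2)$'', coincides with ``$\BH^2$ does not dismantle to the full diagonal $\diagonal(\BH^2)$''.

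For the latter I would argue that the only substructure of $\diagonal(\BH^2)$ whose universe contains $\diagonal(H^2)$ is $\diagonal(\BH^2)$ itself. Indeed, every term of a dismantling sequence starting at $\BH^2$ is, by definition of dismantling sequence, an \emph{induced} substructure of $\BH^2$; so if $\BH^2$ dismantles to $\BK$ with universe $K$, then $R(\BK)=R(\BH^2)\cap K^{\arity}$ for every $R\in\tau$. If in addition $\BK$ is a substructure of $\diagonal(\BH^2)$, then $K\subseteq\diagonal(H^2)$, and together with the requirement $\diagonal(H^2)\subseteq K$ this forces $K=\diagonal(H^2)$, whence $\BK$ is the substructure of $\BH^2$ induced by $\diagonal(H^2)$, i.e.\ $\BK=\diagonal(\BH^2)$; the converse inclusion is trivial. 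So the two dismantling hypotheses are identical, and Proposition~\ref{lem:nossm} applied with $J=H$ produces a $\tau$-structure $\BG$ of bounded degree for which the Gibbs $(\BG,\BH,\lambda)$-specification fails $H$-SM $=$ SSM for every weight function $\lambda$, which is precisely the assertion.

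I do not anticipate a genuine obstacle: this is a bookkeeping reduction. The one point that deserves a sentence of care is the ``substructure versus induced substructure'' distinction in the statement of Proposition~\ref{lem:nossm} — one must make sure that allowing non-induced substructures of $\diagonal(\BH^2)$ as dismantling targets does not enlarge the relevant class beyond $\diagonal(\BH^2)$ itself, which is exactly what the induced-substructure property of dismantling sequences guarantees.
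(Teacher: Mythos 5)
Your proposal is correct and is exactly the paper's intended route: the corollary is the $J=H$ specialization of Proposition~\ref{lem:nossm}, using the observation after Definition~\ref{def:Jsm} that $H$-SM is SSM. Your extra care about ``substructure of $\diagonal(\BH^2)$ containing $\diagonal(H^2)$'' versus the full diagonal is sound, since dismantling sequences pass through induced substructures, so the target with universe $\diagonal(H^2)$ is necessarily $\diagonal(\BH^2)$ itself.
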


\subsection{Proofs of the propositions}

Before proving the propositions, we will need to state and prove the following lemma.

\begin{lemma}
\label{lemmafrozen}
Let $\BH$ be a finite $\tau$-structure with universe $H$ and let $J \subseteq H$. Suppose that $\BH^2$ does not dismantle to a substructure of $\diagonal(\BH^2)$ whose universe contains $\diagonal(J^2)$. Then, there exists an infinite $\tau$-structure of bounded degree $\BG$, $x_0 \in G$, and $\rho \in \Hom(\BG,\BH^2)$ such that, for $\phi_i=\pi_i \circ \rho$ ($i=1,2$) and any cofinite set $U \subseteq G \setminus \{x_0\}$ with $\rho^{-1}(\Delta(J^2)) \subseteq U$, we have that
\begin{equation}
\label{eq:X} X_1\cap X_2=\emptyset,
\end{equation} 
for $X_i := \{ \psi(x_0) : \psi\in \Hom(\BT_\BK,\BH), \left.\psi\right|_U = \left.\phi_i\right|_U\}$ and $i=1,2$.
\end{lemma}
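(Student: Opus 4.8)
The plan is to run, with only minor changes, the argument behind Lemma~\ref{le:free-tree}, converting its \emph{reductio} into the positive conclusion $X_1\cap X_2=\emptyset$. First I would fix a $J$-non-foldable structure $\BI$ obtained from $\BH$ by folding only elements of $H\setminus J$ (so $J\subseteq I$), let $\BK$ be the symmetric $\diagonal(J^2)$-non-foldable structure furnished by Lemma~\ref{le:symmetric}, and let $r\colon\BH^2\to\BK$ be the natural retraction obtained as a composition of folds. The hypothesis that $\BH^2$ does not dismantle to a substructure of $\diagonal(\BH^2)$ whose universe contains $\diagonal(J^2)$ is exactly the negation of~(\ref{itemdismantles}); by Lemmas~\ref{le:retraction} and~\ref{le:distmantledrestricted} it is equivalently the negation of~(\ref{itemdismantlesrestricted}). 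Since $\diagonal(I^2)\subseteq K$, were $K$ to consist of diagonal elements only we would have $\BK=\diagonal(\BI^2)$, i.e.\ $\BI^2$ dismantles to its diagonal, contradicting $\neg(\ref{itemdismantlesrestricted})$; hence $K$ contains a non-diagonal element $a=(a_1,a_2)$. I would then set $\BG:=\BT_{\BK}$, which is of bounded degree (since $\BK$ is finite, each walk $w\in T_{\BK}$ occurs in only a uniformly bounded number of tuples of $\BT_{\BK}$) and infinite (the non-diagonal $a$ is not isolated in $\BK$ --- an isolated element is dominated, hence diagonal by $\diagonal(J^2)$-non-foldability --- so $\BK$ has a tuple and $\BT_{\BK}$ contains walks of every length). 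Finally put $x_0:=w_0$, the length-$0$ walk starting at $a$, and $\rho:=\rho_{\BK}$ regarded as a homomorphism into $\BH^2$, so $\phi_i=\pi_i\circ\rho_{\BK}$ and $\rho_{\BK}(x_0)=a$.

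For the main step, suppose that $b\in X_1\cap X_2$ for some cofinite $U\subseteq G\setminus\{x_0\}$ with $\rho^{-1}(\diagonal(J^2))\subseteq U$; pick $\psi_1,\psi_2\in\Hom(\BT_{\BK},\BH)$ with $\psi_i|_U=\phi_i|_U$ and $\psi_1(x_0)=\psi_2(x_0)=b$. Exactly as in Lemma~\ref{le:free-tree}, the map $\rho'$ defined by $\rho'(w):=r(\psi_1(w),\psi_2(w))$ is a homomorphism from $\BT_{\BK}$ to $\BK$, being the composite of $(\psi_1,\psi_2)\colon\BT_{\BK}\to\BH^2$ with $r\colon\BH^2\to\BK$. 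For $w\in U$ we have $(\psi_1(w),\psi_2(w))=(\phi_1(w),\phi_2(w))=\rho_{\BK}(w)\in K$, so $\rho'(w)=r(\rho_{\BK}(w))=\rho_{\BK}(w)$; thus $\rho'$ agrees with $\rho_{\BK}$ on the cofinite set $U\supseteq\rho_{\BK}^{-1}(\diagonal(J^2))$. By Lemma~\ref{le:universaltree} applied with $\BK$ in place of $\BH$ and $\diagonal(J^2)$ in place of $J$ (valid since $\BK$ is $\diagonal(J^2)$-non-foldable), $\rho'=\rho_{\BK}$. But $\rho'(x_0)=r(b,b)$ is a diagonal element, because the dismantling $\BH^2\to\BI^2\to\cdots\to\BK$ sends diagonal elements to diagonal elements --- in the stage $\BH^2\to\BI^2$ each lifted fold maps $(c,c)$ to $(c',c')$, and in the stage $\BI^2\to\BK$ only non-diagonal elements are folded --- whereas $\rho_{\BK}(x_0)=a=(a_1,a_2)$ is non-diagonal. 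This contradiction establishes $X_1\cap X_2=\emptyset$ for every such $U$.

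The construction itself is light, so I would expect two bookkeeping points to carry the weight of the argument: first, extracting from the failure of~(\ref{itemdismantles}) that the greedily obtained $\BK$ still has a non-diagonal element, which rests on the equivalence $(\ref{itemdismantles})\Leftrightarrow(\ref{itemdismantlesrestricted})$ and the greedy dismantling of Remark~\ref{pro:effective}; and second --- the place where non-diagonality of $a$ is really used --- verifying that the natural retraction $r$ maps diagonal elements to diagonal elements, which is what forces $\rho'(x_0)$ to be diagonal and so produces the contradiction. Everything else is a direct transcription of the proofs of Lemma~\ref{le:free-tree} and Lemma~\ref{le:universaltree}.
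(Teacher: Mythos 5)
Your proposal is correct and follows essentially the same route as the paper's proof: take $\BK$ from Lemma~\ref{le:symmetric}, set $\BG=\BT_{\BK}$, $x_0=w_0$, $\rho=\rho_{\BK}$, and derive the contradiction from Lemma~\ref{le:universaltree} together with the fact that the retraction $r$ sends diagonal elements to diagonal elements. The only difference is cosmetic: you additionally spell out why $\BK$ retains a non-diagonal element (via the (\ref{itemdismantlesrestricted})$\Leftrightarrow$(\ref{itemdismantles}) equivalence) and why $\BT_{\BK}$ is infinite and of bounded degree, points the paper leaves implicit.
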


\begin{proof}
Suppose that $\BH^2$ does not dismantle to a substructure $\BK$ with universe $K$ satisfying $\diagonal(J^2) \subseteq K \subseteq\diagonal(H^2)$. Let $\BI$ be any $J$-non-foldable relational structure obtained by dismantling $\BH$ and let $\BK$ be the $\diagonal(J^2)$-non-foldable relational structure given by Lemma~\ref{le:symmetric}. It follows easily (for example, see the proof of Lemma \ref{le:retraction}) that $\BH^2$ dismantles to $\BK$ and that there is a retraction $r$ from $\BH^2$ to $\BK$ such that the image of every diagonal element is also a diagonal element (that is, such that $r(\diagonal(H^2))\subseteq\diagonal(H^2)$).

By our assumption, $\BK$ contains some non-diagonal element $(a_1,a_2)$. Let $w_0\in T_{\BK}$ be the (unique) walk of length $0$ starting in $(a_1,a_2)$, and let $U\subseteq T_{\BK}$ be any cofinite set such that $w_0\not\in U$ and $\rho_{\BK}^{-1}(\Delta(J^2))\subseteq U$.

Note that $\Hom(\BT_\BK,\BH)$ contains mappings $\phi_i=\pi_i \circ \rho_\BK$ for $i=1,2$. Also, let 
$$X_i=\{ \psi(w_0) : \psi\in \Hom(\BT_\BK,\BH), \left.\psi\right|_U = \left.\phi_i\right|_U\}.$$

We claim that 
\begin{equation}
\label{eq:X} X_1\cap X_2=\emptyset.
\end{equation} 

Let us prove it by contradiction. Assume that $\psi_1(w_0)=\psi_2(w_0)$, where $\psi_i \in \Hom(\BT_\BK,\BH)$ and $\left.\psi_i\right|_U = \left.\phi_i\right|_U$ for $i=1,2$. Note that $\Psi(x_0)=r\circ (\psi_1(x_0),\psi_2(x_0))$ defines a homomorphism from $\BT_\BK$ to $\BK$ that agrees with $\rho_{\BK}$ in $U$. Since $\BK$ is $\diagonal(J^2)$-non-foldable and $\rho_{\BK}^{-1}(\diagonal(J^2))\subseteq U$, it follows by Lemma \ref{le:universaltree} that $\Psi(w_0)=(a_1,a_2)$. However, this is a contradiction since $(\psi_1(w_0),\psi_2(w_0))$ is a diagonal element and $r$ sends diagonal elements to diagonal elements. Finally, by identifying $\BG$ with $\BT_\BK$, $\rho$ with $\rho_{\BK}$, and $a$ with $w_0$, we conclude.
\end{proof}

\begin{proof}[Proof of Proposition \ref{lem:frozengibbs}]
Let $\BG$, $x_0 \in G$, and $\rho$ be as in Lemma \ref{lemmafrozen} for $J = \emptyset$. Let $\lambda$ be any weight function for $\BH$. Construct a Gibbs measure $\mu_1$ by taking \emph{weak limits} using $\phi_1 = \pi_1 \circ \rho$, this is to say, we consider the sequence of measures $\{\mathbb{P}_{V_n,\phi_1}\}_n$, where $V_n$ is an increasing sequence (in the sense of inclusion) of finite sets containing $x_0$ eventually exhausting $\BG$. Such a sequence must have a subsequence $\{\mathbb{P}_{V_{n_k},\pi_1 \circ \rho}\}_k$ \emph{weakly converging} to a Gibbs measure $\mu_1$ (i.e., $\lim_{k \to \infty} \mathbb{P}_{V_{n_k},\pi_1 \circ \rho}(A(\phi,V)) = \mu_1(A(\phi,V))$ for all $\phi$ and finite $V$). This is a standard argument used for constructing Gibbs measures \cite[Chapter 4]{1-georgii}. Similarly, the sequence $\{\mathbb{P}_{V_{n_k},\pi_2 \circ \rho}\}_k$ must contain a subsequence converging to a Gibbs measure $\mu_2$. 

For every $a \in H$, we have that
$$\lim_{k \to \infty} \mathbb{P}_{V_{n_k},\phi_1}(\{\psi(x_0) = a\}) = \mu_1(\{\psi(x_0) = a\}).$$

Hence, choose any $a\in H$ such that $\mu_1(\{\psi(x_0) = a\})>0$ (that it has to exist, since $\mu_1$ is a probability measure). It follows from Equation (\ref{eq:X}), by setting $U = G \setminus V_{n_k}$, that 
$$\lim_{k \to \infty} \mathbb{P}_{V_{n_k},\phi_2}(\{\psi(x_0) = a\}) = 0.$$

Consequently, $\mu_2(\{\psi(x_0) = a\})=0$ and hence, $\mu_1\neq\mu_2$. 
\end{proof}

\begin{proof}[Proof of Proposition \ref{lem:nossm}]
This is a direct consequence of the results proved in Proposition \ref{lem:frozengibbs}. Let us fix some weight function $\lambda$ for $\BH$. Assume $\BH^2$ does not dismantle to a substructure of $\diagonal(\BH^2)$ whose universe contains $\diagonal(J^2)$ and let $\BG$, $x_0$, and $\rho$ as in Lemma \ref{lemmafrozen}. Define $\phi_i = \pi_i \circ \rho$ for $i=1,2$. Pick $m$ large enough and let $V\subseteq G$ be the set of all elements $x \in G$ that are at distance less than $m$ from $x_0$ and such that $\rho(x)\not\in\diagonal(J^2)$. 

Since $\mathbb{P}_{V,\phi_1}$ is a probability measure, there exists some element $a \in H$ such that 
$$
\mathbb{P}_{V,\phi_1}(\{\psi(x_0) = a\}) \geq \frac{1}{|H|}.
$$

It follows from Equation (\ref{eq:X}) in Proposition \ref{lem:frozengibbs} (by setting $U = G\setminus V$), that
$$
\mathbb{P}_{V,\phi_2}(\{\psi(x_0) = a\})=0.
$$

Consequently, we have that $\mathbb{P}_{V,\phi_1}(\{\psi(x_0) = a\})-\mathbb{P}_{V,\phi_2}(\{\psi(x_0) = a\}) \geq \frac{1}{|H|}$.
Note that $\dist(x_0,D_V^J(\phi_1,\phi_2))\geq m$. Indeed, it follows from the definition of $V$ that if the distance of $x \in \partial V$ to $x_0$ is less than $m$, then $\rho(x)\in\diagonal(J^2)$, which implies that $(\phi_1(x), \phi_2(x)) \notin H^2 \setminus \diagonal(J^2)$. It follows that for any constants $C,\alpha>0$, the quantity $C \cdot \exp(-\alpha \cdot \dist(x_0,D_U^J(\phi_1,\phi_2)))$ from the definition of $J$-SM can be made arbitrarily small by choosing $m$ large enough. Therefore, $J$-SM cannot hold.
\end{proof}

\begin{proof}[Proof of Proposition \ref{lem:counterex}]
Let $\tau = \{R_1,R_2,R_3\}$ be a signature with $R_i$ a $2$-ary relation for $i=1,2,3$. Consider the $\tau$-structure $\BH$ with universe $H = \{0,1,2\}$ and
\begin{itemize}
\item $R_1(\BH) = \{(0,0),(0,1),(1,0)\}$,
\item $R_2(\BH) = \{(1,1),(1,2),(2,1)\}$, and
\item $R_3(\BH) = \{(2,2),(2,0),(0,2)\}$.
\end{itemize}

It can be checked that $\BH^2$ dismantles to $\diagonal(\BH^2)$. Indeed, it suffices to fold $(0,1)$ and $(1,0)$ to $(0,0)$, $(1,2)$ and $(2,1)$ to $(1,1)$, and $(0,2)$ and $(2,0)$ to $(2,2)$. This $\tau$-structure is intimately related to the so-called \emph{hardcore model}, a system well studied in combinatorics and statistical physics \cite{1-galvin,1-weitz} consisting of a Gibbs $(\BG,\BH,\lambda_{\mathrm{HC}})$-specification for an arbitrary graph $\BG$, the graph $\BH$ with universe $\{a,b\}$ (where we think that $a$ is a \emph{particle} and $b$ is a \emph{non-particle}) and edge relation $E = \{(a,b),(b,b)\}$, and a weight function $\lambda_{\mathrm{HC}}$ such that $\lambda_{\mathrm{HC}}(b) = 1$ and $\lambda_{\mathrm{HC}}(a) > 0$, usually called \emph{activity}.

Now, consider the $\tau$-structure $\BG$ consisting of $3$ copies of the \emph{$\Delta$-regular tree} for an arbitrary $\Delta \geq 6$, so that the graph adjacency relation in the $i$th copy is given by $R_i$, for $i=1,2,3$. Let $\lambda:\{0,1,2\} \to \mathbb{R}^+$ be an arbitrary weight function. Then, we can think that we have $3$ copies of the hardcore model (with values $\{0,1\}$, $\{1,2\}$, and $\{2,0\}$, respectively) on a $\Delta$-regular tree with activities $\lambda(1)/\lambda(0)$, $\lambda(2)/\lambda(1)$, and $\lambda(0)/\lambda(2)$, respectively (i.e. the ratios between the weight of particle versus non-particle). By symmetry, w.l.o.g., suppose that $\lambda(1)/\lambda(0) \geq 1$. The \emph{critical activity} for the hardcore model in a $\Delta$-regular tree is given by the formula $\frac{(\Delta-1)^\Delta}{(\Delta-2)^\Delta}$ \cite{kelly1985,1-weitz}, i.e., if the activity is below this threshold there exists a unique Gibbs measure (\emph{subcritical regime}), and if it is below, there exist multiple ones (\emph{supercritical regime}). Since $\frac{(\Delta-1)^\Delta}{(\Delta-2)^\Delta} < 1 \leq \lambda(1)/\lambda(0)$ for $\Delta \geq 6$, then the first copy corresponds to a hardcore model  in a $\Delta$-regular tree in the supercritical regime, inducing multiple Gibbs measure on $\Hom(\BG,\BH)$.
\end{proof}

\section{Finite duality revisited}
\label{section6}

Throughout this section all relational structures are assumed to be finite. We say that a $\tau$-structure $\BH$ is a {\bf core} if every homomorphism from $\BH$ to $\BH$ is one-to-one. An {\bf obstruction} to $\BH$ is a $\tau$-structure $\mathbb{O}$ that admits no homomorphism to $\BH$; the obstruction $\mathbb{O}$ is {\bf critical} if every \emph{proper} substructure (i.e., any substructure different from $\mathbb{O}$ itself) admits a homomorphism to $\BH$. A relational structure $\BH$ is said to have {\bf finite duality} if it has only finitely many critical obstructions $\{\mathbb{O}_1,\dots,\mathbb{O}_m\}$. This implies that, for every $\tau$-structure $\BG$,
$$
\Hom(\mathbb{O}_i,\BG) \neq \emptyset \text{ for some } 1 \leq i \leq m \iff \Hom(\BG,\BH) = \emptyset.
$$

We say that a $\tau$-structure $\BH$ {\bf contains all  constants} if for every $a\in H$ there exists $R_a\in\tau$ such that $R_a(\BH)=\{a\}$. Note that every such relational structure is a core. 

The main result in \cite{MR2357493} states that a core relational structure $\BH$ has finite duality if and only if $\BH^2$ dismantles to its diagonal. In this section we shall see how this result follows from Theorem~\ref{the:main}. In addition, we shall show that, when $\BH$ contains all constants, having finite duality is equivalent to having finitely many critical $\tau$-tree obstructions, which was not previously known.

\begin{theorem}
\label{the:core}
Let $\BH$ be a finite $\tau$-structure which is a core. Then, the following are equivalent:
\begin{enumerate}
\item[(\mylabel{A1c}{A1c})] $\BH^2$ dismantles to its full diagonal;
\item[(\mylabel{D1c}{D1c})] $\BH$ has finitely many critical obstructions.
\end{enumerate}

Furthermore, if $\BH$ contains all the constants, then the following statement is also equivalent:
\begin{enumerate}[resume]
\item[(\mylabel{D2c}{D2c})] $\BH$ has finitely many critical $\tau$-tree obstructions.
\end{enumerate}
\end{theorem}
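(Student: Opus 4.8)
The plan is to obtain both equivalences of Theorem~\ref{the:core} from Theorem~\ref{the:main} specialised to $J=H$, bridged by a size analysis of critical obstructions. Since a core is $\emptyset$-non-foldable, for a core $\BH$ condition (A1c) is the same as ``$\BH^2$ dismantles to a substructure of its diagonal'', so Theorem~\ref{the:main} with $J=H$ and the remark following it identify (A1c) with: there is $\gap\ge 0$ such that $\Hom(\BG,\BH)$ is strongly $H$-irreducible (equivalently, TSSM) with gap $\gap$ for every $\tau$-structure $\BG$; and also with conditions (\ref{itemLparticular}) and (\ref{itemtree}) taken with $J=H$. Using these reformulations freely, and since every critical $\tau$-tree obstruction is in particular a critical obstruction (so (D1c)$\Rightarrow$(D2c) is immediate), it suffices to prove (i) (A1c)$\Rightarrow$(D1c), (ii) $\neg$(A1c)$\Rightarrow\neg$(D1c), and, assuming in addition that $\BH$ contains all constants, (iii) $\neg$(A1c)$\Rightarrow\neg$(D2c).

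For (i) I would argue, along the lines of \cite{MR2357493}, that under (A1c) no critical obstruction is ``large''. Let $\ell$ bound the length of a dismantling of $\BH^2$ to $\diagonal(\BH^2)$ and let $r_0,\dots,r_\ell$ be the associated retractions from Equation~(\ref{eq:r}); set $\gap=2\ell$ as in Lemma~\ref{le:interpolate}. If $\mathbb{O}$ is a critical obstruction, every proper substructure of $\mathbb{O}$ maps to $\BH$, and one uses this together with the inward folding by $r_0,\dots,r_\ell$ performed within radius $\ell$ of a chosen tuple (exactly the mechanism in the proofs of Lemmas~\ref{le:retraction2} and \ref{le:interpolate}) to repair a homomorphism defined off one tuple into a homomorphism of all of $\mathbb{O}$ --- a contradiction --- unless $\mathbb{O}$ has at most some $N=N(\BH,\tau,\ell)$ tuples. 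Since a critical obstruction has no isolated elements and there are only finitely many $\tau$-structures of bounded size up to isomorphism, (D1c) follows.

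For (ii), assume $\neg$(A1c). By Lemma~\ref{le:symmetric} with $J=H$ (or directly by Lemma~\ref{lemmafrozen} with $J=H$) there are a symmetric $\diagonal(H^2)$-non-foldable substructure $\BK\subseteq\BH^2$ containing a non-diagonal element, a retraction $r\colon\BH^2\to\BK$ mapping diagonal elements to diagonal elements, and the label map $\rho_\BK\colon\BT_\BK\to\BK$, witnessing long range action at the length-$0$ walk $w_0$ over that non-diagonal element: any homomorphisms $\psi_1,\psi_2\colon\BT_\BK\to\BH$ for which $\psi_i$ agrees with $\pi_i\circ\rho_\BK$ on some cofinite set $U\not\ni w_0$ with $\rho_\BK^{-1}(\diagonal(H^2))\subseteq U$ satisfy $\psi_1(w_0)\ne\psi_2(w_0)$ (this is how Lemma~\ref{le:universaltree} is exploited in \cite{MR2357493} and in Lemma~\ref{le:free-tree}). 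To manufacture obstructions, truncate $\BT_\BK$ to walks of length $\le n$, glue two disjoint copies along the common root $w_0$, and attach to each copy a \emph{pinning gadget} forcing the length-$n$ walks, and the interior walks with diagonal $\BK$-label, to the $\pi_1\circ\rho_\BK$-values on one copy and the $\pi_2\circ\rho_\BK$-values on the other. A finite, level-by-level rerun of the propagation in the proof of Lemma~\ref{le:universaltree} shows that a homomorphism of the resulting structure $\mathbb{O}_n$ to $\BH$ would yield $\psi_1,\psi_2$ as above with $\psi_1(w_0)=\psi_2(w_0)$, which is impossible; so each $\mathbb{O}_n$ is an obstruction whose size grows with $n$, hence a critical substructure of $\mathbb{O}_n$ has size tending to infinity and (D1c) fails. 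The \textbf{main obstacle} is building the pinning gadget for a general core $\BH$: one realises ``$x$ takes value $b$'' by an obstruction-free gadget (an indicator-type construction valid for cores), and must verify that it interacts with the rest of $\mathbb{O}_n$ so as not to create new homomorphisms and so that a critical substructure still has unbounded size; the finite propagation bookkeeping also has to be checked carefully.

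For (iii), run the construction of (ii) but now take the pinning gadget to be the relevant constant relation $R_b$ applied to the node to be pinned; this is available because $\BH$ contains all constants, and it forces in one step the values on the length-$n$ walks and on the interior diagonal-labelled walks. As the constant relations are carried on single elements, attaching them to the $\tau$-forest obtained by truncating $\BT_\BK$ keeps the structure a $\tau$-forest, so a critical (hence connected) substructure of $\mathbb{O}_n$ is a $\tau$-tree. Thus $\neg$(A1c) produces critical $\tau$-tree obstructions of unbounded size, so (D2c) fails; combined with (i), (ii) and the trivial implication (D1c)$\Rightarrow$(D2c), this yields (A1c)$\Leftrightarrow$(D1c) in general and (A1c)$\Leftrightarrow$(D1c)$\Leftrightarrow$(D2c) when $\BH$ contains all constants.
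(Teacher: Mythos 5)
Your overall decomposition --- (A1c)$\Rightarrow$(D1c), plus the two contrapositive constructions $\neg$(A1c)$\Rightarrow\neg$(D1c) and $\neg$(A1c)$\Rightarrow\neg$(D2c) --- differs from the paper precisely at the point you yourself flag as the ``main obstacle'', and that obstacle is a genuine gap, not a technicality. For a general core $\BH$ there is no local ``pinning gadget'' that forces a node to take a \emph{specific} value $b$: the standard device (glue a copy of $\BH$ and identify the node with $b$) only pins the node up to an automorphism of $\BH$, since a homomorphism restricted to the copy is merely some automorphism $\sigma$, and different gadgets attached to different nodes may pick up different automorphisms, destroying the propagation argument you want to run. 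The fix --- using one \emph{global} copy of $\BH$ glued at all pinned nodes and composing with $\sigma^{-1}$ --- restores honest pinning, but then the glued copy creates shortcuts between forest nodes that are far apart in $\BT_\BK$, so the ``a critical substructure of $\mathbb{O}_n$ has size tending to infinity'' claim no longer follows from a distance argument, and the whole size analysis would have to be redone from scratch. The paper avoids this difficulty entirely: it never proves $\neg$(A1c)$\Rightarrow\neg$(D1c) directly. Instead it proves (D1c)$\Rightarrow$(A1c) by passing to $\BH_c$ (the structure with all constants adjoined), showing that finite duality of $\BH$ transfers to $\BH_c$ --- this is where the single glued copy of $\BH$ and the core property are used, and only to bound the size of critical obstructions of $\BH_c$, not to build large ones --- then applying the constants-available implication (D2c)$\Rightarrow$(A1c) to $\BH_c$, and finally observing that a dismantling of $\BH_c^2$ to its diagonal is also one of $\BH^2$. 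Without either completing your gadget analysis or switching to such an indirect route, the equivalence (A1c)$\Leftrightarrow$(D1c) for cores without constants is not established.

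Two smaller points. In step (i) your sketch of ``repairing a homomorphism defined off one tuple'' does not by itself give a contradiction (removing a single tuple from a critical obstruction \emph{always} yields a homomorphism); the actual argument, as in the paper, needs two tuples at distance at least $\gap$, removes one near each, and uses strong irreducibility (Theorem \ref{the:main}(\ref{itemgraph}), with $J=\emptyset$ already sufficient) to merge the two partial homomorphisms; this bounds the \emph{diameter} of critical obstructions, and one still needs the cited fact \cite[Lemma 2.4]{MR2357493} that bounded diameter implies finitely many critical obstructions --- your jump to ``at most $N$ tuples'' is not justified as stated. Your step (iii), by contrast, is sound and close in spirit to the paper's (D2c)$\Rightarrow$(A1c): the paper gets the finite pinned structure more directly by compactness from the failure of Theorem \ref{the:main}(\ref{itemtree}) with $J=H$ and colours $V$, $W$ and the agreement set with the constants, whereas you truncate $\BT_\BK$ and rerun the propagation of Lemma \ref{le:universaltree} at finite depth; both work because the unary constant relations cannot create cycles, so critical obstructions stay $\tau$-trees.
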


\begin{proof}
As mentioned earlier, the equivalence $(\ref{A1c})\Leftrightarrow(\ref{D1c})$ was shown in \cite{MR2357493}. Here we provide an alternative proof.

\medskip
\noindent
$(\ref{A1c})\Rightarrow(\ref{D1c})$. Assume that statement (\ref{A1c}) holds. It follows that statement Theorem \ref{the:main}(\ref{itemgraph}) holds for any $J\subseteq H$ (although we note that in this proof it suffices the case $J=\emptyset$). We claim that the diameter of the critical obstructions of $\BH$ is bounded, where the diameter of a relational structure is the maximum distance between any pair of its elements. It follows easily (see \cite[Lemma 2.4]{MR2357493}) that our claim implies statement (\ref{D1c}).

To prove the claim, assume towards a contradiction that there exists a critical obstruction $\BG$ containing two elements $x$ and $y$ at distance at least $\gap+2$  (where $\gap$ is the gap given by statement Theorem \ref{the:main}(\ref{itemgraph})). Let $R\in\tau$ and let $\bx=(x_1,\dots,x_k)\in R(\BG)$ in which $x$ occurs. Since $\BG$ is critical, it follows that the substructure obtained by removing $\bx$ from $R(\BG)$ has a homomorphism $\phi$ to $\BH$. Similarly, the substructure obtained by removing from some relation $S(\BG)$ a tuple $\by=(y_1,\dots,y_{k'})$ where $y$ occurs, has a homomorphism $\psi$ to $\BH$. 

Hence, both $\phi$ and $\psi$ define homomorphisms from the substructure $\BK$ obtained from $\BG$ by removing both $\ba$ from $R(\BG)$ and $\bb$ from $S(\BG)$. Note that the distance, in $\BK$, from $V=\{x_1,\dots,x_k\}$ to $W=\{y_1,\dots,y_{k'}\}$ is at least $\gap$. It follows from Theorem \ref{the:main}(\ref{itemgraph}) that there is a homomorphism $\gamma$ from $\BK$ to $\BH$ that agrees with $\phi$ on $V$ and with $\psi$ with $W$. Consequently, $\gamma$ defines a homomorphism from $\BG$ to $\BH$, a contradiction.

\medskip
\noindent
$(\ref{D1c})\Rightarrow(\ref{D2c})$. It is immediate, since critical tree obstructions are critical obstructions.

\medskip
\noindent
$(\ref{D2c})\Rightarrow(\ref{A1c})$. Let us prove the contrapositive. Assume that $\BH^2$ does not dismantle to the diagonal or, equivalently, that $\BH$ does not satisfy statement Theorem \ref{the:main}(\ref{itemdismantlesrestricted}) when $J=H$. It follows that $\BH$ does not satisfy statement Theorem \ref{the:main}(\ref{itemtree}) either. Let $\gap$ be an arbitrary gap. Since the statement Theorem \ref{the:main}(\ref{itemtree}) fails, it follows by standard compactness arguments that there is a finite substructure $\BG$ of $\BT_{\BH^2}$ and $V, W \subseteq G \subseteq T_{\BH^2}$ with $\dist(V,W)\geq \gap$ such that $\Hom(\BG,\BH)$ is not $(V,W)$-mixing with respect to $H$. That is, there exists mappings $\phi,\psi \in \Hom(\BG,\BH)$ such that there is no mapping in $\Hom(\BG,\BH)$ that agrees with $\phi$ on $V$, with $\psi$ on $W$, and with both on every element $x\in G$ such that $\phi(x)=\psi(x)$.

Now, let $\BK$ be the $\tau$-structure obtained from $\BG$ by coloring every element $x\in V$ according to $\phi$, every element in $W$ according to $\psi$, and every element $x$ such that $\phi(x)=\psi(x)\in J$ according to either $\phi$ or $\psi$. Since there is no homomorphism from $\BK$ to $\BH$ and, consequently, there is a substructure $\BI$ of $\BK$ that is a critical obstruction of $\BH$. Since $\BI$ is critical, then it is connected. Consequently, since $\BG$ is a substructure of $\BT_{\BH^2}$ and $\BT_{\BH^2}$ does not contain cycles, it follows that $\BI$ is a $\tau$-tree. Clearly, $V\cap I$ is nonempty since otherwise the mapping $x\mapsto\psi(x)$ would define a homomorphism from $\BI$ to $\BH$. Similarly, $W\cap I\neq\emptyset$. Since $\BI$ is connected and the distance in $\BG$ (and hence in $\BI$) from $V$ to $W$ is at least $\gap$, it follows that $|I|\geq\gap+1$. Since $\gap$ is arbitrary, we have completed the proof.

\medskip
\noindent
$(\ref{D1c})\Rightarrow(\ref{A1c})$.
Assume that $\BH$ satisfies statement (\ref{D1c}). Let $\BH_c$ obtained by endowing $\BH$ with all constants. Formally, if $\tau$ is the signature of $\BH$, then $\tau_c$ is the new signature containing a new relation symbol $R_a$ for every $a\in H$, and $\BH_c$ is the $\tau_c$-structure obtained from $\BH$ by setting $R_a(\BH_c)=\{a\}$ for every $a\in H$. We shall show that $\BH_c$ has also finite duality. Consequently, $\BH_c$ satisfies statement (\ref{D2c}) and, hence, $\BH_c^2$ dismantles to the diagonal, implying that $\BH^2$ dismantles to the diagonal as well. 

The proof of this claim is fairly standard. Let $\BK_c$ be a minimal critical obstruction of $\BH_c$ and let $\BG$ be the $\tau$-structure constructed in the following way. In a first stage, consider the disjoint union of $\BH$ and $\BK$, where $\BK$ is the $\tau$-structure obtained by removing all constants from $\BK_c$ (that is, $\BK$ is the $\tau$-structure obtained from $\BK_c$ by removing all relations in $\tau_c\setminus \tau$). 
In a second stage, we glue some elements from $\BH$ and $\BK$. In particular we glue every element $a$ in $\BH$ to every element $b \in R_a(\BK_c)$. We shall show
that $\BG$ is not homomorphic to $\BH$.  Assume towards a contradiction that there is $\phi$ that defines a homomorphism from $\BG$ to $\BH$. Clearly, the restriction of $\phi$ to $H$, that we shall denote $\left.\phi\right\vert_H$, defines a homomorphism of $\BH$ that must be one-to-one since $\BH$ is a core. Since $\left.\phi\right\vert_H$ is one-to-one it follows that $\left.\phi\right\vert_H^{-1}$ is also a homomorphism from $\BH$ to $\BH$ and, hence, $\left.\phi\right\vert_H^{-1} \circ \phi$ defines a homomorphism from $\BG$ to $\BH$ that acts as the identity on $\BH$. It follows that $\left.\phi\right\vert_H^{-1} \circ \phi$ defines a homomorphism from $\BK_c$ to $\BH_c$, a contradiction.

Let $\BJ$ be any substructure of $\BG$ which is a critical obstruction of $\BH$. It follows easily from the criticality of $\BK_c$ that $\BJ$ contains $\BK$. Since $\BH$ has finite duality it follows that there is a bound on the size of $\BJ$ and, hence, of $\BK$.
\end{proof}

\begin{remark}
{\normalfont
Notice that for the implications $(\ref{A1c})\Rightarrow(\ref{D1c})$ and $(\ref{D1c})\Rightarrow(\ref{D2c})$ we didn't need the fact that $\BH$ is a core.
}
\end{remark}

It has been shown in \cite{Nesetal08} that if a $\tau$-structure $\BH$ has finite duality, then there exists some finite set $\{\mathbb{T}_1,\dots,\mathbb{T}_m\}$ of $\tau$-trees such that for every $\tau$-structure $\BI$ not homomorphic to $\BH$, there exists some $1 \leq i \leq m$ such that $\mathbb{T}_i$ is homomorphic to $\BI$ but not homomorphic to $\BH$. We want to note that the equivalence between statements $(\ref{D2c})$ and $(\ref{D1c})$ does not follow from this fact. Indeed, direction $(\ref{D2c})\Rightarrow(\ref{D1c})$ does not hold when we do not require that the $\tau$-structure $\BH$ is equipped with constants as witnessed by the case when $\BH$ is the oriented $3$-cycle. Note that, in this case, $\BH$ satisfies $(\ref{D2c})$ since every $\tau$-tree is homomorphic to $\BH$ and, hence, $\BH$ has no critical $\tau$-tree obstructions at all. However, since any oriented cycle whose length is not a multiple of $3$ is a critical obstruction of $\BH$, it follows that $\BH$ does not satisfy $(\ref{D1c})$.

The next corollary is direct.

\begin{corollary}
\label{cor:fd}
Let $\BH$ be a finite $\tau$-structure which is a core. Then, the following are equivalent:
\begin{enumerate}
\item[(\mylabel{A1c}{A1c})] $\BH^2$ dismantles to its full diagonal;
\item[(\mylabel{B1c}{B1c})] $C(\BG,\BH)$ is $H$-connected for every locally finite $\tau$-structure $\BG$;
\item[(\mylabel{C1c}{C1c})] $\Hom(\BG,\BH)$ is topologically strong spatial mixing for every $\tau$-structure $\BG$; and
\item[(\mylabel{D1c}{D1c})] $\BH$ has finitely many critical obstructions.
\end{enumerate}
\end{corollary}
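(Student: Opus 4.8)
The plan is to read off Corollary~\ref{cor:fd} from Theorem~\ref{the:main} (specialized to $J=H$) together with Theorem~\ref{the:core}, without any fresh construction. First, one unwinds the relevant statements at $J=H$: statement Theorem~\ref{the:main}(\ref{itemdismantlesrestricted}) then asks that $\BH$ dismantle to a substructure $\BI$ with $H\subseteq I$ — which forces $\BI=\BH$, since each fold strictly shrinks the universe — and that $\BH^2$ dismantle to its diagonal, so it coincides with (\ref{A1c}); and statement Theorem~\ref{the:main}(\ref{itemC1}) at $J=H$ is verbatim (\ref{B1c}). Hence Theorem~\ref{the:main} gives $(\ref{A1c})\Leftrightarrow(\ref{B1c})$ immediately, while $(\ref{A1c})\Leftrightarrow(\ref{D1c})$ is the core case of Theorem~\ref{the:core}, applicable since $\BH$ is a core.

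For $(\ref{A1c})\Rightarrow(\ref{C1c})$ I would use Theorem~\ref{the:main} once more: (\ref{itemdismantlesrestricted}) at $J=H$ implies (\ref{itemgraph}) at $J=H$, producing a single $\gap\geq 0$ such that $\Hom(\BG,\BH)$ is strongly $H$-irreducible with gap $\gap$ for \emph{every} $\tau$-structure $\BG$. By the remark recorded just before Theorem~\ref{the:main}, ``strongly $H$-irreducible with gap $\gap$'' means exactly ``TSSM with gap $\gap$'', so each such $\Hom(\BG,\BH)$ is TSSM, which is (\ref{C1c}) (in fact with a uniform gap).

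The remaining direction $(\ref{C1c})\Rightarrow(\ref{A1c})$ requires handling a small quantifier mismatch: (\ref{C1c}) supplies, for each $\BG$, only \emph{some} gap, whereas Theorem~\ref{the:main}(\ref{itemgraph}) would need a uniform one. I would sidestep this by applying (\ref{C1c}) to the single structure $\BG=\BT_{\BH^2}$: there is then a $\gap$ with $\Hom(\BT_{\BH^2},\BH)$ TSSM with gap $\gap$, and I claim this already forces statement Theorem~\ref{the:main}(\ref{itemtree}) at $J=H$. Indeed, given $\phi,\psi\in\Hom(\BT_{\BH^2},\BH)$, any $x\in T_{\BH^2}$, and $W\subseteq T_{\BH^2}$ with $\dist(x,W)\geq \gap$, apply TSSM with $V=\{x\}$ and with the agreement set $S:=(\phi,\psi)^{-1}(\diagonal(H^2))$ — on which $\phi$ and $\psi$ coincide by definition of $S$ — to obtain $\gamma$ agreeing with $\phi$ on $\{x\}\cup S$ and with $\psi$ on $S\cup W$; this $\gamma$ is precisely a witness that $\Hom(\BT_{\BH^2},\BH)$ is $(\{x\},W)$-mixing with respect to $H$. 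Theorem~\ref{the:main} ($(\ref{itemtree})\Rightarrow(\ref{itemdismantlesrestricted})$ at $J=H$, i.e.\ Lemma~\ref{le:free-tree}) then delivers (\ref{A1c}). The only point demanding care is this last translation between TSSM on $\BT_{\BH^2}$ and the ``w.r.t.\ $H$'' mixing of Theorem~\ref{the:main}(\ref{itemtree}) — the exact analogue on $\BT_{\BH^2}$ of the observation that strong $H$-irreducibility coincides with TSSM — so there is no genuine obstacle and the corollary is, as stated, direct.
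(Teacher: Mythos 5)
Your proposal is correct and is essentially the paper's intended argument: the corollary is read off from Theorem~\ref{the:main} specialized to $J=H$ (using that strong $H$-irreducibility with gap $\gap$ is exactly TSSM with gap $\gap$) together with the equivalence (\ref{A1c})$\Leftrightarrow$(\ref{D1c}) of Theorem~\ref{the:core}, which is why the paper simply calls it ``direct''. Your extra care in $(\ref{C1c})\Rightarrow(\ref{A1c})$ --- resolving the per-structure gap by specializing to $\BG=\BT_{\BH^2}$ and passing through condition (\ref{itemtree}) --- is a sound way of making explicit a step the paper leaves implicit, not a different route.
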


Notice that Corollary \ref{cor:fd} has novel consequences even in the graph case. Despite graphs that have finitely many critical obstructions are the graphs without any edge, one can always consider, given a graph $\BH$, a version of it endowed with all constants. Formally, we consider the $\tau$-structure $\overline{\BH}$ with $\tau = \{E\} \cup \{R_a\}_{a \in H}$, universe $H$, $E(\overline{\BH}) = E(\BH)$, and $R_a(\BH) = \{a\}$. 

First, notice that, since $\overline{\BH}$ contains all constants, $\overline{\BH}$ is a core. In addition, if $\BH$ dismantles to its full diagonal, then $\overline{\BH}$ also dismantles to its full diagonal. Therefore, by the previous results, $\overline{\BH}$ has a finite set $\{\mathbb{O}_1,\dots,\mathbb{O}_m\}$ of critical obstructions. Now suppose that we are given another graph $\BG$ and a partial $H$-coloring of $\BG$, this is to say, a function $\phi_U: U \to H$, where $U \subseteq G$. A natural question  is whether $\phi_U$ can be extended to a homomorphism $\phi \in \Hom(\BG,\BH)$. This computational problem is usually called  \emph{homomorphism extension problem} \cite{LaroseZ03}, and turns out to be equivalent to the \emph{retraction problem} \cite{FederH98}.

Consider the auxiliary $\tau$-structure $\overline{\BG}^{\phi_U}$ with universe $G$, $E(\overline{\BG}^{\phi_U}) = E(\BG)$, and $R_a(\overline{\BG}^{\phi_U}) = \{\phi_U^{-1}(a)\}$ for $a \in H$. The previous results tells us that such extension exists if and only if $\Hom(\mathbb{O}_i,\overline{\BG}^{\phi_U}) = \emptyset$ for all $1 \leq i \leq m$. In other words, we only need to check $\overline{\BG}^{\phi_U}$ locally to see that this is possible. It is easy to extend this observation to the more general framework of relational structures.

\section*{Acknowledgements}

We thank the anonymous reviewers for their careful reading of our manu\-script and their many insightful comments and suggestions.

The first author was supported by CONICYT/FONDECYT Postdoctorado 3190191 and ERC Starting Grants 678520 and 676970. The second author was supported by an NSERC Discovery grant. The third author was supported by MICCIN grants TIN2016-76573-C2-1P and PID2019-109137GB-C22,  and Maria de Maeztu Units of Excellence Programme MDM-2015-0502. The fourth author was supported by an NSERC Discovery grant and FRQNT.

This work was done in part while the first three authors were visiting the Simons Institute for the Theory of Computing at University of California, Berkeley.

%%%%%%%%%%%%%%%%%%% BIBLIOGRAPHY

%% main text
%\section{}
%\label{}

%% The Appendices part is started with the command \appendix;
%% appendix sections are then done as normal sections
%% \appendix

%% \section{}
%% \label{}

%% If you have bibdatabase file and want bibtex to generate the
%% bibitems, please use
%%
\bibliographystyle{plain} 
\bibliography{references}

\begin{thebibliography}{10}

\bibitem{Achlioptas04:exponential}
Dimitris Achlioptas, Paul Beame, and Michael Molloy.
\newblock Exponential bounds for {DPLL} below the satisfiability threshold.
\newblock In {\em Proceedings of the Fifteenth Annual {ACM-SIAM} Symposium on
  Discrete Algorithms, {SODA} 2004, New Orleans, Louisiana, USA, January 11-14,
  2004}, pages 139--140, 2004.

\bibitem{1-alon}
Noga Alon, Raimundo Brice{\~n}o, Nishant Chandogtia, Alexander Magazinov, and
  Yinon Spinka.
\newblock Mixing properties of colorings of the $\mathbb{Z}^d$ lattice.
\newblock {\em Combinatorics, Probability and Computing, to appear (arXiv:
  1903.11685)}, 2019.

\bibitem{Atserias08}
Albert Atserias.
\newblock On digraph coloring problems and treewidth duality.
\newblock {\em Eur. J. Comb.}, 29(4):796--820, 2008.

\bibitem{ban2017tree}
Jung-Chao Ban and Chih-Hung Chang.
\newblock Tree-shifts: Irreducibility, mixing, and the chaos of tree-shifts.
\newblock {\em Trans. Amer. Math. Soc.}, 369(12):8389--8407, 2017.

\bibitem{bandyopadhyay2008}
Antar Bandyopadhyay and David Gamarnik.
\newblock Counting without sampling: {A}symptotics of the log-partition
  function for certain statistical physics models.
\newblock {\em Random Structures \& Algorithms}, 33(4):452--479, 2008.

\bibitem{Bodirsky15}
Manuel Bodirsky.
\newblock The complexity of constraint satisfaction problems (invited talk).
\newblock In {\em 32nd International Symposium on Theoretical Aspects of
  Computer Science, {STACS} 2015, March 4-7, 2015, Garching, Germany}, pages
  2--9, 2015.

\bibitem{borgs2006}
Christian Borgs, Jennifer Chayes, L{\'a}szl{\'o} Lov{\'a}sz, Vera~T S{\'o}s,
  and Katalin Vesztergombi.
\newblock Counting graph homomorphisms.
\newblock In {\em Topics in discrete mathematics}, pages 315--371. Springer,
  2006.

\bibitem{boyle2010}
Mike Boyle, Ronnie Pavlov, and Michael Schraudner.
\newblock Multidimensional sofic shifts without separation and their factors.
\newblock {\em Trans. Amer. Math. Soc.}, 362(9):4617--4653, 2010.

\bibitem{1-briceno}
Raimundo Brice\~no.
\newblock The topological strong spatial mixing property and new conditions for
  pressure approximation.
\newblock {\em Ergodic Theory Dynam. Systems}, 38(5):1658--1696, 2018.

\bibitem{2-briceno}
Raimundo Brice\~no and Ronnie Pavlov.
\newblock Strong spatial mixing in homomorphism spaces.
\newblock {\em SIAM J. Discrete Math.}, 31(3):2110--2137, 2017.

\bibitem{briceno2018factoring}
Raimundo Brice{\~n}o, Kevin McGoff, and Ronnie Pavlov.
\newblock Factoring onto $\mathbb{Z}^d$ subshifts with the finite extension
  property.
\newblock {\em Proc. Amer. Math. Soc.}, 146(12):5129--5140, 2018.

\bibitem{2-brightwell}
Graham~R. Brightwell and Peter Winkler.
\newblock Graph homomorphisms and phase transitions.
\newblock {\em J. Combin. Theory Ser. B}, 77(2):221--262, 1999.

\bibitem{1-brightwell}
Graham~R. Brightwell and Peter Winkler.
\newblock Gibbs measures and dismantlable graphs.
\newblock {\em J. Combin. Theory Ser. B}, 78(1):141--166, 2000.

\bibitem{4-brightwell}
Graham~R. Brightwell and Peter Winkler.
\newblock Graph homomorphisms and long range action.
\newblock {\em DIMACS Series in Discrete Mathematics and Theoretical Computer
  Science}, 63:29--48, 2004.

\bibitem{Bulatov17}
Andrei~A. Bulatov.
\newblock A dichotomy theorem for nonuniform {CSP}s.
\newblock In {\em 58th {IEEE} Annual Symposium on Foundations of Computer
  Science, {FOCS} 2017, Berkeley, CA, USA, October 15-17, 2017}, pages
  319--330, 2017.

\bibitem{Bulatov12:enumerating}
Andrei~A. Bulatov, V{\'{\i}}ctor Dalmau, Martin Grohe, and D{\'{a}}niel Marx.
\newblock Enumerating homomorphisms.
\newblock {\em J. Comput. Syst. Sci.}, 78(2):638--650, 2012.

\bibitem{BulatovKL08}
Andrei~A. Bulatov, Andrei~A. Krokhin, and Beno{\^i}t Larose.
\newblock Dualities for constraint satisfaction problems.
\newblock In {\em Complexity of Constraints - An Overview of Current Research
  Themes [Result of a Dagstuhl Seminar]}, pages 93--124, 2008.

\bibitem{ceccherini2010cellular}
Tullio Ceccherini-Silberstein and Michel Coornaert.
\newblock {\em Cellular automata and groups}.
\newblock Springer Science \& Business Media, 2010.

\bibitem{ceccherini2012}
Tullio Ceccherini-Silberstein and Michel Coornaert.
\newblock On the density of periodic configurations in strongly irreducible
  subshifts.
\newblock {\em Nonlinearity}, 25(7):2119, 2012.

\bibitem{CohenCJZ19}
David~A. Cohen, Martin~C. Cooper, Peter~G. Jeavons, and Stanislav Zivny.
\newblock Binary constraint satisfaction problems defined by excluded
  topological minors.
\newblock {\em Inf. Comput.}, 264:12--31, 2019.

\bibitem{DalmauKL04}
V{\'{\i}}ctor Dalmau, Andrei~A. Krokhin, and Beno{\^i}t Larose.
\newblock First-order definable retraction problems for posets and reflexive
  graph.
\newblock In {\em 19th {IEEE} Symposium on Logic in Computer Science {(LICS}
  2004), 14-17 July 2004, Turku, Finland, Proceedings}, pages 232--241, 2004.

\bibitem{dechter2003}
Rina Dechter and David Cohen.
\newblock {\em Constraint processing}.
\newblock Morgan Kaufmann, 2003.

\bibitem{2-dyer}
Martin Dyer and Catherine Greenhill.
\newblock The complexity of counting graph homomorphisms.
\newblock {\em Random Structures \& Algorithms}, 17(3-4):260--289, 2000.

\bibitem{1-dyer}
Martin Dyer, Alistair Sinclair, Eric Vigoda, and Dror Weitz.
\newblock Mixing in time and space for lattice spin systems: A combinatorial
  view.
\newblock {\em Random Structures \& Algorithms}, 24(4):461--479, 2004.

\bibitem{FederH98}
Tom{\'{a}}s Feder and Pavol Hell.
\newblock List homomorphisms to reflexive graphs.
\newblock {\em J. Comb. Theory, Ser. {B}}, 72(2):236--250, 1998.

\bibitem{1-feder}
Tom{\'a}s Feder and Moshe~Y. Vardi.
\newblock The computational structure of monotone monadic {SNP} and constraint
  satisfaction: A study through {Datalog} and group theory.
\newblock {\em SIAM Journal on Computing}, 28(1):57--104, 1998.

\bibitem{Nesetal08}
Jan Foniok, Jaroslav Ne\v{s}et\v{r}il, and Claude Tardif.
\newblock Generalised dualities and maximal finite antichains in the
  homomorphism order of relational structures.
\newblock {\em Eur. J. Comb.}, 29(4):881--899, 2008.

\bibitem{1-galvin}
David Galvin and Jeff Kahn.
\newblock On phase transition in the hard-core model on $\mathbb{Z}^d$.
\newblock {\em Comb. Probab. Comput.}, 13(2):137--164, 2004.

\bibitem{gamarnik2009}
David Gamarnik and Dmitriy Katz.
\newblock Sequential cavity method for computing free energy and surface
  pressure.
\newblock {\em Journal of Statistical Physics}, 137(2):205, 2009.

\bibitem{1-georgii}
Hans-Otto Georgii.
\newblock {\em Gibbs Measures and Phase Transitions}, volume~9 of {\em De
  Gruyter Studies in Mathematics}.
\newblock Berlin, 2 edition, 2011.

\bibitem{Grohe07-otherside}
Martin Grohe.
\newblock The complexity of homomorphism and constraint satisfaction problems
  seen from the other side.
\newblock {\em Journal of the ACM (JACM)}, 54(1):1--24, 2007.

\bibitem{Hatanaka18}
Tatsuhiko Hatanaka, Takehiro Ito, and Xiao Zhou.
\newblock Complexity of reconfiguration problems for constraint satisfaction.
\newblock {\em CoRR}, abs/1812.10629, 2018.

\bibitem{MR2089014}
Pavol Hell and Jaroslav Ne{\v{s}}et{\v{r}}il.
\newblock {\em Graphs and homomorphisms}, volume~28 of {\em Oxford Lecture
  Series in Mathematics and its Applications}.
\newblock Oxford University Press, Oxford, 2004.

\bibitem{Hodges97}
Wilfrid Hodges.
\newblock {\em A shorter Model Theory}.
\newblock Cambridge University Press, 1997.

\bibitem{Ito11}
Takehiro Ito, Erik~D. Demaine, Nicholas J.~A. Harvey, Christos~H.
  Papadimitriou, Martha Sideri, Ryuhei Uehara, and Yushi Uno.
\newblock On the complexity of reconfiguration problems.
\newblock {\em Theor. Comput. Sci.}, 412(12-14):1054--1065, 2011.

\bibitem{1-kelly}
David Kelly and Ivan Rival.
\newblock Crowns, fences, and dismantlable lattices.
\newblock {\em Canadian Journal of Mathematics}, 26(5):1257--1271, 1974.

\bibitem{kelly1985}
Frank~P. Kelly.
\newblock Stochastic models of computer communication systems.
\newblock {\em Journal of the Royal Statistical Society. Series B
  (Methodological)}, pages 379--395, 1985.

\bibitem{KolaitisVardi00-containment}
P.~Kolaitis and M.~Vardi.
\newblock {Conjunctive-Query Containment and Constraint Satisfaction}.
\newblock {\em Journal of Computer and System Sciences}, 61:302--332, 2000.

\bibitem{Kozik16}
Marcin Kozik.
\newblock Weak consistency notions for all the {CSP}s of bounded width.
\newblock In {\em 31st Annual {ACM/IEEE} Symposium on Logic in Computer
  Science, {LICS} '16}, pages 633--641, 2016.

\bibitem{Krzakala07:Gibbs}
Florent Krz{\k a}ka{\l}a, Andrea Montanari, Federico Ricci-Tersenghi, Guilhem
  Semerjian, and Lenka Zdeborov\'a.
\newblock Gibbs states and the set of solutions of random constraint
  satisfaction problems.
\newblock {\em PNAS}, 104(25):10318--10323, 2007.

\bibitem{Kun12}
G{\'{a}}bor Kun, Ryan O'Donnell, Suguru Tamaki, Yuichi Yoshida, and Yuan Zhou.
\newblock Linear programming, width-1 {CSP}s, and robust satisfaction.
\newblock In {\em Innovations in Theoretical Computer Science, {ICT} '12},
  pages 484--495, 2012.

\bibitem{MR2357493}
Beno{\^i}t Larose, Cynthia Loten, and Claude Tardif.
\newblock A characterisation of first-order constraint satisfaction problems.
\newblock {\em Log. Methods Comput. Sci.}, 3(4):4:6, 22, 2007.

\bibitem{LaroseZ03}
Beno{\^{\i}}t Larose and L{\'{a}}szl{\'{o}} Z{\'{a}}dori.
\newblock The complexity of the extendibility problem for finite posets.
\newblock {\em {SIAM} J. Discret. Math.}, 17(1):114--121, 2003.

\bibitem{lind1995}
Douglas Lind and Brian Marcus.
\newblock {\em An introduction to symbolic dynamics and coding}.
\newblock Cambridge university press, 1995.

\bibitem{lind2002}
Douglas Lind and Klaus Schmidt.
\newblock Symbolic and algebraic dynamical systems.
\newblock In {\em Handbook of dynamical systems}, volume~1, pages 765--812.
  Elsevier, 2002.

\bibitem{marcus2012}
Brian Marcus and Joachim Rosenthal.
\newblock {\em Codes, systems, and graphical models}, volume 123.
\newblock Springer Science \& Business Media, 2012.

\bibitem{1-martinelli}
Fabio Martinelli, Enzo Olivieri, and Roberto~H. Schonmann.
\newblock For 2-{D} lattice spin systems weak mixing implies strong mixing.
\newblock {\em Communications in Mathematical Physics}, 165(1):33--47, 1994.

\bibitem{Mezzard09:information}
Marc M\'ezard and Andrea Montanari.
\newblock {\em Information, Physics, and Computation}.
\newblock Oxford University press, 2009.

\bibitem{Nishimura18}
Naomi Nishimura.
\newblock Introduction to reconfiguration.
\newblock {\em Algorithms}, 11(4):52, 2018.

\bibitem{1-nowakowski}
Richard Nowakowski and Peter Winkler.
\newblock Vertex-to-vertex pursuit in a graph.
\newblock {\em Discrete Mathematics}, 43(2-3):235--239, 1983.

\bibitem{pavlov2015}
Ronnie Pavlov and Michael Schraudner.
\newblock Entropies realizable by block gluing $\mathbb{Z}^d$ shifts of finite
  type.
\newblock {\em Journal d'Analyse Math{\'e}matique}, 126(1):113--174, 2015.

\bibitem{schmidt2001multi}
Klaus Schmidt.
\newblock Multi-dimensional symbolic dynamical systems.
\newblock In {\em Codes, systems, and graphical models}, pages 67--82.
  Springer, 2001.

\bibitem{1-schraudner}
Michael Schraudner.
\newblock A matrix formalism for conjugacies of higher-dimensional shifts of
  finite type.
\newblock {\em Colloq. Math.}, 110(2):493--515, 2008.

\bibitem{1-weitz}
Dror Weitz.
\newblock Counting independent sets up to the tree threshold.
\newblock In {\em Proceedings of the Thirty-eighth Annual ACM Symposium on
  Theory of Computing}, STOC '06, pages 140--149, New York, NY, USA, 2006. ACM.

\bibitem{1-wrochna}
Marcin Wrochna.
\newblock Homomorphism reconfiguration via homotopy.
\newblock {\em SIAM Journal on Discrete Mathematics}, 34(1):328--350, 2020.

\bibitem{Zhuk17}
Dmitriy Zhuk.
\newblock A proof of {CSP} dichotomy conjecture.
\newblock In {\em 58th {IEEE} Annual Symposium on Foundations of Computer
  Science, {FOCS} 2017, Berkeley, CA, USA, October 15-17, 2017}, pages
  331--342, 2017.

\end{thebibliography}

%% else use the following coding to input the bibitems directly in the
%% TeX file.

%%\begin{thebibliography}{00}
%%
%% \bibitem[Author(year)]{label}
%% Text of bibliographic item
%%
%%\bibitem[ ()]{}
%%
%\end{thebibliography}

\end{document}